\newtheorem{theorem}{Theorem}[section]    
\newtheorem{lemma}[theorem]{Lemma}          
\newtheorem{proposition}[theorem]{Proposition}
\newtheorem{prop-defn}[theorem]{Proposition-Definition}  
\newtheorem{claim}[theorem]{Claim}  
\newtheorem{corollary}[theorem]{Corollary} 
\theoremstyle{definition}
\newtheorem{definition}[theorem]{Definition}
\newtheorem{remark}[theorem]{Remark}
\newtheorem{example}[theorem]{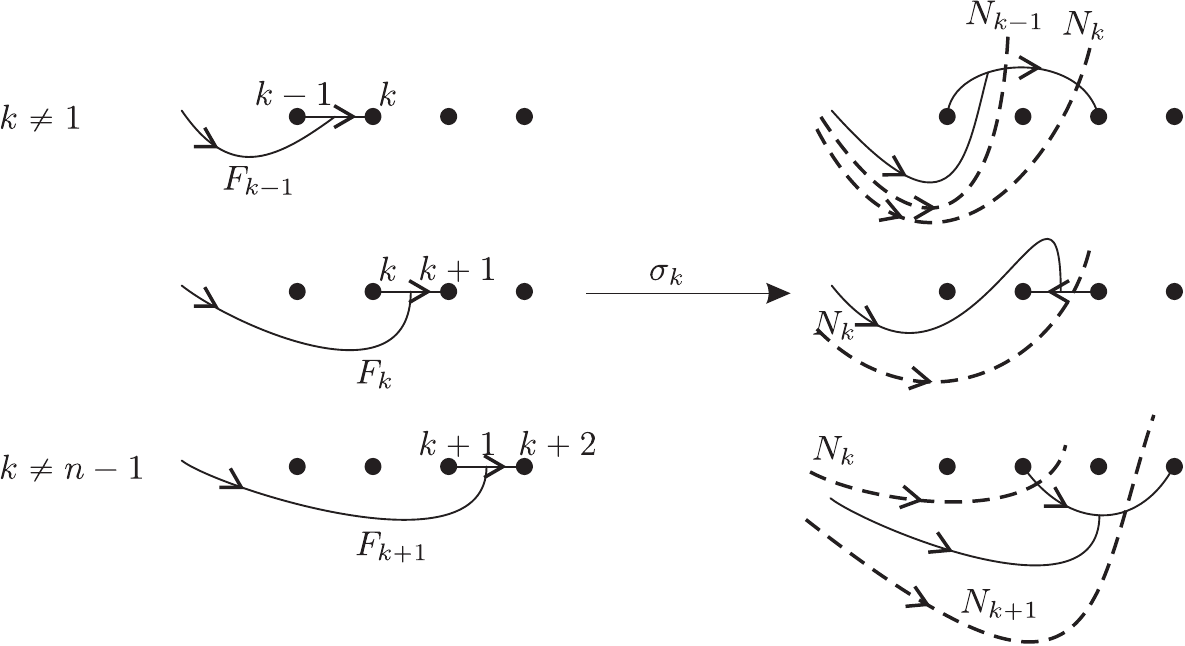}
\def\co{\colon \thinspace}
\newcommand{\GL}{\textrm{GL}}
\newcommand{\Z}{\mathbb{Z}}
\newcommand{\C}{\mathbb{C}}
\newcommand{\LWcr}{\textsf{LWcr}}
\newcommand{\SWcr}{\textsf{SWcr}}
\newcommand{\Wcr}{\textsf{Wcr}}
\title[Garside theory and Burau representation]{Garside-theoretic 
analysis of Burau representations}
\author{Matthieu Calvez}
\author{Tetsuya Ito}
\address{Matthieu Calvez, Departamento de Matem\'{a}tica y Ciencia de la Computaci\'{o}n, Facultad de Ciencias, Universidad de Santiago de Chile, Avenida Libertador Bernardo O'Higgins~3363, Santiago, Chile}
\email{calvez.matthieu@gmail.com}
\address{Research Institute for Mathematical Sciences, Kyoto university
Kyoto, 606-8502, Japan}
\email{tetitoh@kurims.u-kyoto.ac.jp}
\urladdr{http://kurims.kyoto-u.ac.jp/~tetitoh/}
\subjclass[2010]{Primary~20F36 
, Secondary~20F10,57M07,20G42}
\keywords{Burau representation, Braid group, Garside structure, curve diagram}
\begin{document}

\begin{abstract} 
We establish relations between both the classical and the dual Garside structures of the braid group and the Burau representation. 
Using the classical structure, 
we formulate a non-vanishing criterion for 
the Burau representation of the 4-strand braid group. In the dual context, it is shown that the Burau representation for arbitrary braid index is injective when restricted to the set of \emph{simply-nested braids}.   
\end{abstract}
\maketitle

\section{Introduction}\label{section:introduction}

The (reduced) \emph{Burau representation} 
$$\rho_n:\ \ B_n \longrightarrow \text{GL}\left(n-1,\mathbb Z[q^{\pm 1}]\right)$$ 
was the first possible candidate for a faithful linear representation of the braid group on $n$ strands $B_n$ and it has been known for long to be faithful in the case of the 3-strand braid group~\cite{magnuspeluso}. 
However, Moody \cite{moody} showed that the Burau representation is not faithful for any braid index $n\geqslant 9$. This was brought down to $n\geqslant 6$ by Long and Paton \cite{lp} and finally Bigelow showed the non-faithfulness of $\rho_5$ \cite{bigelow}. Despite these negative results, the linearity question of the braid groups was settled in the positive independently by Krammer \cite{krammer} and Bigelow \cite{bigelowlinear}. They showed that another linear representation 
$$\mathcal L_n:\ \ B_n\longrightarrow GL\left(\frac{n(n-1)}{2},\mathbb Z[q^{\pm 1},t^{\pm 1}]\right)$$ 
constructed by Lawrence \cite{lawrence} is faithful for all $n$. The representation $\mathcal{L}_{n}$ is now known as the \emph{Lawrence-Krammer-Bigelow representation}, or LKB representation for short.

At present, the question of the faithfulness of the Burau representation in the case $n=4$ remains open. 
The linearity question itself was solved, nevertheless the problem to determine whether $\rho_4$ is faithful or not remains of considerable importance:
a negative answer would be of great interest in 
quantum topology, since it is equivalent to the non-faithfulness of Jones and Temperley-Lieb representations of $B_{4}$, and would 
provide a non-trivial knot with trivial Jones polynomial \cite{bi3}.
Another interesting related problem is to study the image of the Burau representation -- an old question asks which $(n-1)\times (n-1)$ matrices over $\Z[q^{\pm 1}]$ can appear as the image under the (reduced) Burau representation of some braid \cite{birman}; it is widely open.

The present paper aims to establish relations between the Garside structures of the braid group and the Burau representation. Our motivation was to understand to what extent the Burau representation is close to be faithful, and when faithfulness property breaks down. This not only helps to attack the faithfulness problem of the 4-strand Burau representation, but also provides new insights for the image and the kernel of the Burau representation for arbitrary braid index, even for the simplest case $n=3$ (see Corollary \ref{cor:3strands} below).

The \emph{classical Garside structure}
consists in a lattice structure together with a special element $\Delta$ satisfying some properties initially discovered by Garside in \cite{garside}. 
A crucial output of this structure is the
\emph{classical (left) normal form} of a braid $x$, which is a unique decomposition of the form 
$$N_{\sf c}(x) = \Delta^p s_1\cdots s_r$$
 in which the factors belong to the set of the so-called \emph{simple elements}. 
The \emph{classical} \emph{supremum} and \emph{infimum} of $x$ are defined by $\sup_{\sf c}(x)= p+r$, $\inf_{\sf c}(x)=p$, respectively. The \emph{classical canonical length} of $x$ is defined by $\ell_{\sf c}(x)=r$; 
the \emph{classical Garside length} $l_{\sf c}(x)$ is the length of $x$ with respect to the simple elements. The latter satisfies
 $l_{\sf c}(x)=\max(\sup_{\sf c}(x),0)-\min(\inf_{\sf c}(x),0)$.

Slightly different but very close in spirit is the \emph{dual Garside structure} (or BKL structure) discovered by Birman, Ko and Lee \cite{bkldual} which leads to the  \emph{dual (left) normal form} of a braid $x$:
$$N_{\sf d}(x) = \delta^p d_1\cdots d_r$$
where the factors belong to the set of the so-called \emph{dual simple elements}.
The dual \emph{supremum}, \emph{infimum} and \emph{canonical length} of a braid $x$ are defined similarly and denoted by $\sup_{\sf d}(x)$, $\inf_{\sf d}(x)$ and $\ell_{\sf d}(x)$ respectively. The \emph{dual Garside length} $l_{\sf d}(x)$ is the length of $x$ with respect to the dual simple elements; it satisfies $l_{\sf d}(x)=\max(\sup_{\sf d}(x),0)-\min(\inf_{\sf d}(x),0)$. 
See Section \ref{section:garside} for more details on both classical and dual Garside structures of the braid group.

Our first main result provides a non-vanishing criterion for the Burau representation $\rho_4$ using the classical Garside structure.
\theoremstyle{plain}
\newtheorem*{thmMainclassical}{Theorem \ref{theorem:main_classical}}
\begin{thmMainclassical}
If the classical left normal form of a 4-braid $x$ 
does not contain a factor $(\sigma_{2}\sigma_{1}\sigma_{3})$ 
then $\rho_{4}(\beta) \neq 1$.   
\end{thmMainclassical}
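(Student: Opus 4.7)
The plan is to exploit the rigidity of the classical left normal form, together with the explicit form of the Burau matrices of the $4! = 24$ simple elements of $B_4$, to define an invariant of $\rho_4(x)$ that can be tracked factor by factor through $N_{\sf c}(x) = \Delta^p s_1 \cdots s_r$.

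First I would handle the edge cases. If $r = 0$ then $x = \Delta^p$ and $\rho_4(\Delta^p)$ is an explicit scalar matrix, which is trivial only when $p = 0$; in that case $x$ itself is trivial. Since $\rho_4(\Delta^2)$ is a central scalar, I can pull the $\Delta$-power out at the cost of a scalar factor and so reduce to asking when a product of Burau images of simple elements, none of which equals $\sigma_2\sigma_1\sigma_3$, can be a scalar matrix.

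Next I would tabulate the Burau images of the $24$ simple elements and isolate the algebraic feature that distinguishes $\sigma_2\sigma_1\sigma_3$. A natural such feature is that $\sigma_2\sigma_1\sigma_3$ realises a $4$-cycle in $S_4$ whose fourth power in $B_4$ equals $\Delta^2$, so its Burau image is of a distinguished rotational type that can reverse the effect of other simple elements on well-chosen vectors. I would then introduce an invariant $I$ on matrices in $\GL(3, \Z[q^{\pm 1}])$ — for example a $q$-adic valuation or degree attached to the image of a specific vector, or a coefficient in a distinguished matrix entry — and verify, by a finite check over the $23$ allowed simple elements, that $I$ changes monotonically under left-multiplication by $\rho_4(s)$ whenever $s$ is one of them, provided the left-weightedness condition on consecutive factors is in force. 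Telescoping through the normal form then forces $I(\rho_4(x)) \neq I(\mathrm{Id})$, which yields $\rho_4(x) \neq 1$.

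The principal obstacle will be choosing $I$ so that the required monotonicity actually persists across products: pointwise monotonicity factor by factor is not automatic, since the factors could a priori conspire to cancel. Here the left-weightedness of the normal form has to be used in an essential way. A promising route is to phrase $I$ in terms of the curve diagram of the $4$-punctured disk, so that Garside combinatorics translate into the incremental gain or loss of specific arcs or intersection numbers; the key observation would then be that $\sigma_2\sigma_1\sigma_3$ is precisely the unique simple element whose action on the relevant configuration is destructive, while every other simple element preserves or enlarges it. The verification is in principle a finite combinatorial check, but the delicate part is to ensure that no product over allowed simple elements yields \emph{any} scalar matrix, not merely the identity.
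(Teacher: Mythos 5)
Your plan points in the same general direction as the paper's argument --- track a degree-type quantity through the left normal form by induction, using left-weightedness to control how each factor interacts with the next, and verify the base data by a finite computation over the simple elements --- but as written it has a genuine gap: the invariant $I$ is never actually defined, and the difficulty you yourself flag (that ``pointwise monotonicity factor by factor is not automatic, since the factors could a priori conspire to cancel'') is precisely the hard part of the proof, not a detail to be deferred. No single scalar invariant (one matrix entry, one valuation of one vector) works here. What the paper does is keep a \emph{vector} of invariants, namely the maximal $q$-degrees $M_1,M_2,M_3$ of the three rows of $\rho_4(x)$, and proves by induction on the number of factors a system of inequalities among them whose exact shape depends on the starting set $S(s_1)$ of the leading factor (e.g.\ if $S(s_1)=\{1\}$ then $M_1>M_2$ \emph{and} $M_1>M_3+1$). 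The strict gaps such as $M_1>M_3+1$ are calibrated so that, when the next matrix is multiplied on the left, the top-degree terms that could cancel are forced apart in degree; this is exactly where left-weightedness (constraining $S(s_2)\subset F(s_1)$) enters, and it is exactly where the argument provably fails for $s_1=\sigma_2\sigma_1\sigma_3$ (the paper exhibits an explicit normal-form braid where the expected inequality breaks). Without specifying such a multi-component inductive hypothesis and checking it case by case, the ``telescoping'' step in your proposal does not go through.

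A second, smaller gap: reducing to ``no product of allowed simple elements is a scalar matrix'' is not quite the right reduction. Since $\rho_4(\Delta)$ is anti-diagonal (not scalar) when the $\Delta$-power $p$ is odd, one must rule out $\rho_4(s_1\cdots s_r)$ being equal to $\rho_4(\Delta^{-p})$, a specific non-scalar matrix, in the odd case; and even with the row-degree inequalities in hand, the case $S(s_1)=\{1,3\}$ does not immediately contradict $\rho_4(x)=1$ --- the paper has to multiply $x$ on the left and right by auxiliary braids ($\sigma_2$, $\Delta\sigma_2$, conjugation by $\sigma_1$, etc.), chosen so that the resulting normal form has a leading factor to which the lemma does apply. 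Your sketch does not anticipate either of these case splits. The curve-diagram reformulation you suggest at the end is an interesting alternative direction (and is the route the paper takes for the \emph{dual} Garside results), but for the classical structure the paper's proof is purely matrix-theoretic, and your proposal as it stands does not supply the combinatorial statement that would replace the degree inequalities.
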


In the dual framework, we obtain more general and strong connections.
For a non-zero Laurent polynomial $\Lambda$ in the variable $q$, 
let us denote by $m(\Lambda)$ and $M(\Lambda)$ the minimal and maximal degrees of the variable, respectively. As a convention, we define $m(0)= +\infty$ and $M(0)=-\infty$.

 For a matrix $\Lambda=(\Lambda_{ij})\in GL\left(n-1,\mathbb Z[q^{\pm 1}]\right)$, 
we set $$m(\Lambda)=\min \{m(\Lambda_{ij}), 1\leqslant i,j\leqslant n-1\},\ \textrm{and}  \ M(\Lambda)=\max \{M(\Lambda_{ij}, 1\leqslant i,j\leqslant n-1\}.$$

In Section \ref{section:dual} we will introduce a notion of \emph{simply-nested} braid; roughly speaking, simply-nestedness is a local condition 
on the factors of the dual left normal form of a braid. 
We will show that the Burau representation completely determines the normal form of simply-nested braids.


\newtheorem*{ThmMainDual}{Theorem \ref{theorem:main_dual}}
\begin{ThmMainDual}
Let $x \in B_n$ be a simply-nested braid. 
\begin{enumerate}
\item[(i)] $\sup_{\sf d}(x) = M(\rho_n(x))$.
\item[(ii)] One can compute the dual normal form from the matrix $\rho_{n}(x)$, so the restriction of the Burau representation on the set of simply-nested braid $B_{n}^{\sf sn}$ is injective.
\end{enumerate} \end{ThmMainDual}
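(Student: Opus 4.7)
The plan is to extract the dual left normal form of a simply-nested braid directly from the $q$-degree structure of its Burau matrix. First I would analyze the Burau image of the dual building blocks: for each dual simple element $d$, the matrix $\rho_n(d)$ has entries that are polynomials in $q$ of degree $0$ or $1$ (a direct check from the formulas for the BKL generators $a_{ts}$), and the matrix $\rho_n(\delta)$ similarly contributes at most one unit of $q$-degree per factor. Expanding the product $\rho_n(\delta^p d_1 \cdots d_r)$ and bounding degrees entry-by-entry gives the easy inequality
\[
M(\rho_n(x)) \;\leq\; p + r \;=\; \sup_{\sf d}(x),
\]
which holds for every $x\in B_n$, whether simply-nested or not.

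The main task in (i) is the reverse inequality under the simply-nested hypothesis. For each dual simple $d$, I would isolate a distinguished entry of $\rho_n(d)$, determined by the non-crossing partition of $d$, whose degree-one part has coefficient $\pm 1$. The simply-nested condition on each consecutive pair $(d_i,d_{i+1})$ recalled in Section~\ref{section:dual} is expected to be \emph{precisely} the local combinatorial condition that forces these distinguished monomials to combine multiplicatively along the product $\rho_n(d_1)\cdots \rho_n(d_r)$, with no cancellation from lower-order terms. Once this is established, the product has an entry of degree exactly $r$ with leading coefficient $\pm 1$; multiplying by $\rho_n(\delta^p)$ shifts this entry to degree $p+r$ without cancellation, yielding $M(\rho_n(x))\geq p+r$ and completing (i).

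For (ii), I would induct on $\ell_{\sf d}(x)=r$. The quantity $\sup_{\sf d}(x)$ is recovered from $M(\rho_n(x))$ by (i); a parallel lower-degree analysis (or a symmetric argument applied after right-multiplying the normal form by a suitable power of $\delta$) recovers $\inf_{\sf d}(x)=p$ from $m(\rho_n(x))$, so both $p$ and $r$ are determined from the matrix. The leading-$q$ coefficient matrix of $\rho_n(x)$ then identifies the leftmost dual simple factor $d_1$: among all dual simples, only one has a leading Burau matrix compatible with the observed leading term of $\rho_n(x)$ under the simply-nested structure. Dividing $x$ on the left by this factor reduces the canonical length by one while preserving simply-nestedness (which is a local condition on consecutive factors), so the induction proceeds; injectivity on $B_n^{\sf sn}$ follows at once.

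The hardest step is the non-cancellation claim inside the proof of (i): one must show that the simply-nested hypothesis is exactly what prevents the distinguished top-degree entries from interacting with lower-degree entries in the matrix product. This requires an explicit bookkeeping of the supports of the entries of each $\rho_n(d)$ in terms of the underlying non-crossing partition, and a translation of the ``nesting'' condition into a precise vanishing statement for certain off-leading entries. Once this combinatorial input is secured, both (i) and the recursive recovery procedure in (ii) reduce to clean degree tracking.
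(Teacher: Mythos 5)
Your upper bound $M(\rho_n(x))\leqslant\sup_{\sf d}(x)$ is essentially right (it is Corollary \ref{cor:ineqn} in the paper), although even here ``degree $0$ or $1$'' is not a direct check on the generators $a_{i,j}$ alone: a dual simple element is a product of up to $n-1$ such letters, so the bound $M(\rho_n(d))\leqslant 1$ for all dual simples, and the bound $M(\rho_n(\delta^{p}))\leqslant p$ for $p<0$, both require an argument. The genuine gap is the step you yourself flag as the hardest: the non-cancellation claim underlying the lower bound. You assert that simply-nestedness ``is expected to be precisely'' the condition forcing the distinguished top-degree monomials to survive the matrix product, but no mechanism is given, and this is not a routine bookkeeping exercise. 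The paper does not prove it algebraically at all; it passes through the homological model of $\rho_n$ (the noodle--fork pairing, Lemma \ref{lemma:Buraumatrix} and Lemma \ref{lemma:exponent}), reads the dual normal form off the curve diagram (Theorem \ref{theorem:finalfactor}), and introduces a coherence property \textbf{(C)} stating that all maximally labeled arc segments meeting a common polygon of $d_r$ are parallel with the same orientation; non-cancellation is then exactly the statement that all top-degree intersections with a standard noodle carry the same sign (Lemma \ref{lemma:CondC}), and simply-nestedness implies \textbf{(C)} by induction on the normal form (Lemma \ref{lemma:SNimplies_C}). Without some substitute for this geometric input, your part (i) remains an unproved assertion.

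Part (ii) has two further problems. First, you propose to recover $\inf_{\sf d}(x)=p$ from $m(\rho_n(x))$ by a ``symmetric argument''; the paper explicitly warns (Remark \ref{remark:PropC'}) that the infimum statement does \emph{not} follow from simply-nestedness, because the hypothesis gives no control over the minimally labeled arcs --- one would need the separate property \textbf{(C$'$)}, which simply-nested braids need not satisfy. (The paper's algorithm avoids this entirely: it repeatedly strips letters $a_{i_0,p}$ off the \emph{last} factor $d_r$, using the column degrees $M^{c}_{i}$ and auxiliary matrices $\rho_n(xa_{i_0,p_{\iota}}^{-1})$, and only at the end is left with $\delta^{p}$.) Second, your claim that the leading $q$-coefficient matrix of the whole product singles out $d_1$ among all dual simple elements is unsupported: the top-degree part of $\rho_n(\delta^{p}d_1\cdots d_r)$ mixes contributions from every factor, and nothing in your sketch explains why it determines the first (or last) one. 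So the recursion in (ii) is not yet justified either.
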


This provides several consequences for faithfulness questions in general.
First of all, it follows that the Burau matrix of a 3-strand braid completely determines its dual normal form.
\newtheorem*{Corollary3strds}{Corollary \ref{cor:3strands}}
\begin{Corollary3strds}
Let $x\in B_3$. Then 
\begin{enumerate}
\item[(i)] $\sup_{\sf d}(x) = M(\rho_{3}(x))$,
\item[(ii)] $\inf_{\sf d}(x) = m(\rho_{3}(x))$.
\item[(iii)] One can compute the dual normal form of $x$ from the matrix $\rho_{3}(x)$.
\end{enumerate}\end{Corollary3strds}


For the 4-strand braid group, we will see:

\newtheorem*{Corollary4stds}{Corollary \ref{cor:4strands}}
\begin{Corollary4stds}
Let $x\in B_4$ and $N_{\sf d}(x)=\delta^pd_1\cdots d_r$. Assume that for all $i=1,\ldots,r-1$,
$(d_i,d_{i+1})$ is not in the following list:
$$ \left\{ \begin{array}{c}(a_{1,2}a_{3,4},a_{2,4}),(a_{1,2}a_{3,4},a_{3,4}a_{2,3}),(a_{1,2}a_{3,4},a_{1,2}a_{1,4}),\\
\ (a_{2,3}a_{1,4},a_{1,3}), (a_{2,3}a_{1,4},a_{1,3}a_{2,3}),(a_{2,3}a_{1,4},a_{1,3}a_{1,4}) \end{array}\right\}$$
%
Then 
\begin{enumerate}
\item[(i)] $\sup_{\sf d}(x) = M_{q}(\rho_{4}(x))$,
\item[(ii)] one can compute the dual normal form of $x$ from the matrix $\rho_4(x)$. 
\end{enumerate}
In particular, if the dual left normal form of a 4-braid $x$ 
does not contain a factor $(a_{1,2}a_{3,4})$ or $(a_{2,3}a_{1,4})$ then $\rho_{4}(\beta) \neq 1$.
\end{Corollary4stds}

Finally we give Garside-theoretical constraints for braids of arbitrary braid index to belong to the kernel of the Burau representation. 
Let $e:B_n \rightarrow \mathbb Z$ be the abelianization map.  
\newtheorem*{Corollarynstds}{Corollary \ref{cor:nstrands}}
\begin{Corollarynstds}
Let $x\in B_{n}$ be a non-trivial braid and $N_{\sf d}(x)=\delta^{p}d_{1}\cdots d_{r}$. If there exists $r'\leqslant r$ such that
\begin{enumerate}
\item[(i)] The subword $x_{r'}=\delta^{p}d_{1}\cdots d_{r'}$ is simply-nested,
\item[(ii)] $r' > e(d_{r'+1}\cdots d_{r})$,
\end{enumerate}
then $\rho_n(x)\neq 1$.\end{Corollarynstds}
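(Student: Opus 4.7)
The plan is to argue by contradiction: assume $\rho_n(x) = 1$, and write $x = x_{r'} \cdot y$ with $y = d_{r'+1} \cdots d_r$, so that $\rho_n(x_{r'}) = \rho_n(y)^{-1}$. I would then combine three pieces of information to derive a contradiction with condition (ii).

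First, apply Theorem \ref{theorem:main_dual}(i) to the simply-nested braid $x_{r'}$ (condition (i)): this yields
$$M(\rho_n(x_{r'})) = \sup_{\sf d}(x_{r'}) = p + r'.$$
Second, I would invoke the general upper bound $M(\rho_n(z)) \leqslant \sup_{\sf d}(z)$ (which I expect to be available as a basic lemma earlier in the dual Garside section, being the one-sided inequality corresponding to the equality in Theorem \ref{theorem:main_dual}(i)), applied to $z = y^{-1}$. Since $y$ is a product of dual simple elements, $\inf_{\sf d}(y) = 0$, and hence $\sup_{\sf d}(y^{-1}) = -\inf_{\sf d}(y) = 0$, giving $M(\rho_n(y^{-1})) \leqslant 0$. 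Combined with the previous equality, this forces $p + r' \leqslant 0$, i.e.\ $p \leqslant -r'$.

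Third, use the determinant identity $\det \rho_n(z) = (-q)^{e(z)}$: the assumption $\rho_n(x) = 1$ gives $e(x) = 0$, so $e(x_{r'}) = -e(y)$. Decomposing $e(x_{r'}) = p(n-1) + \sum_{i=1}^{r'} e(d_i)$ and observing that every $d_i$ appearing in a dual left normal form is a proper dual simple element (neither $1$ nor $\delta$), so $e(d_i) \leqslant n-2$, we obtain
$$-e(y) \;=\; p(n-1) + \sum_{i=1}^{r'} e(d_i) \;\leqslant\; -(n-1)r' + (n-2)r' \;=\; -r',$$
whence $e(y) \geqslant r'$, directly contradicting condition (ii).

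The main obstacle is the second step: securing the inequality $M(\rho_n(z)) \leqslant \sup_{\sf d}(z)$ for a general braid $z$. If only the simply-nested equality of Theorem \ref{theorem:main_dual} is directly in hand, a fallback is to decompose $y^{-1} = d_r^{-1} \cdots d_{r'+1}^{-1}$; each factor $d_i^{-1}$ has dual normal form of the shape $\delta^{-1} \cdot s$ with $s$ a dual simple element, a one-factor form that should qualify as simply-nested, so that Theorem \ref{theorem:main_dual}(i) applies individually to give $M(\rho_n(d_i^{-1})) = 0$, and one then concludes by the subadditivity $M(AB) \leqslant M(A) + M(B)$ of the maximum $q$-degree under matrix multiplication.
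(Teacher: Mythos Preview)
Your argument is correct and follows essentially the same route as the paper's proof: both combine Theorem~\ref{theorem:main_dual}(i) for $x_{r'}$, the general one-sided bound $M(\rho_n(z))\leqslant\sup_{\sf d}(z)$ applied to the tail (this is exactly Corollary~\ref{cor:ineqn}, so your ``main obstacle'' is already handled and the fallback is unnecessary), and the abelianization constraint $e(x)=0$ together with $e(d_i)\leqslant n-2$. The only cosmetic difference is that the paper phrases the second step as $0=M(\rho_n(x_{r'})\rho_n(y))\geqslant M(\rho_n(x_{r'}))=p+r'$, whereas you equate $\rho_n(x_{r'})=\rho_n(y^{-1})$ and bound $M(\rho_n(y^{-1}))\leqslant\sup_{\sf d}(y^{-1})\leqslant 0$; these are the same inequality read two ways.
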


Thus, we conclude that if a braid $x$ is sufficiently close to simply nested braids, then its Burau matrix is never trivial.

Now we explain the organization of the paper.  
In section \ref{section:garside} we recall Garside theoretical no(ta)tions to be used later. 
Section \ref{section:classical} shows Theorem \ref{theorem:main_classical}.
Sections \ref{section:diagrams}-\ref{section:dual} are devoted to the proof of Theorem \ref{theorem:main_dual}. This 
can be sketched as follows. First we recall from \cite{iw} the wall-crossing labeling of the curve diagram of a braid and how it is related to the dual Garside normal form (Section \ref{section:diagrams}). Section \ref{section:burau} reviews a homological interpretation of the reduced Burau representation; in this context we show how the Burau matrix is related to the wall-crossing labeling.
Wall-crossing labeling therefore serves as a bridge between Burau representation and the dual Garside structure. 
Finally, Section \ref{section:dual} introduces the notion of simply-nestedness and proves Theorem \ref{theorem:main_dual}
and its above mentionned corollaries.


\section{Reminders on the Garside structures of braid groups}\label{section:garside}

Let $\mathbb D^2$ be the closed disk in $\mathbb C$ with diameter the real segment $[0,n+1]$ and $\mathbb D_n$ be the $n$-times punctured disk: 
$\mathbb D_{n}=\mathbb D^2-\{1,\ldots,n \}$. 
We denote the $i$-th puncture point $i \in \C$ by $p_{i}$ and put $p_0=0\in \mathbb C$.
As is well-known, the braid group $B_{n}$ is identified with the mapping class group of $\mathbb D_{n}$ (with boundary fixed pointwise).
We identify the standard Artin generator $\sigma_{i}$ ($i=1,\ldots,n-1$) with the \emph{left-handed} (that is, \emph{clockwise}) half Dehn twist along the real segment $[i,i+1]$. Throughout the paper we will consider braids acting on the \emph{right}.

For $1\leqslant i\neq j\leqslant n$, we denote by $a_{i,j}$ (or $a_{j,i}$ indifferently) the isotopy class of the left-handed half Dehn twist along an arc connecting the punctures $p_i$ and $p_j$ through the lower part of the disk $\{z \in \mathbb D^{2}\: | \: \textrm{Im}\, z < 0\}$. Using the Artin generators, $a_{i,j}$ $(i<j)$ can be written as 
$$ a_{i,j} = (\sigma_{j-2}\cdots \sigma_{i+1}\sigma_{i})^{-1} \sigma_{j-1}(\sigma_{j-2}\cdots\sigma_{i+1}\sigma_{i}).$$

\subsection{The classical Garside structure}\label{section:garsideclassical}

Let $B_{n}^{+}$ be the monoid of \emph{positive braids}, i.e. those braids which can be expressed as words on the letters $\sigma_i$ with only positive exponents.
Since the works of Garside \cite{garside}, Thurston \cite{echlpt}, and ElRIfai and Morton \cite{em}, it is well-known that the monoid $B_{n}^{+}$ induces a lattice order $\preccurlyeq_{\sf c}$ on $B_n$ called the \emph{prefix} order, through the relation $x\preccurlyeq_{\sf c} y$ if and only if $x^{-1}y \in B_n^{+}$. 

The positive left-divisors (with respect to $\preccurlyeq_{\sf c}$) of the half-twist of all strands 
$$\Delta = (\sigma_{1}\sigma_{2}\cdots\sigma_{n-1})\cdots(\sigma_{1}\sigma_{2})(\sigma_{1})$$ 
are finitely many and generate the group $B_n$. These are called \emph{simple elements} (or \emph{positive permutation braids}, because they are in one-to-one correspondence with the symmetric group on $n$ objects). Simple elements have been extensively studied in \cite{em}, where a nice geometric description of them is given: a positive braid $x$ is a simple element if and only if each pair of strands in $x$ has at most one crossing. 
The pair $(B_n^{+},\Delta)$ is generally called \emph{classical (usual) Garside structure} of the braid group.

An ordered pair of two simple elements $(s ,s')$ is said to be \emph{left-weighted} if $\Delta \wedge_{\sf c}(ss') = s$, where $\wedge_{\sf c}$ denotes the greatest common divisor with respect to the lattice ordering $\preccurlyeq_{\sf c}$. 

\begin{prop-defn}\cite{em}
Let $x\in B_n$. There exists a unique decomposition of $x$ of the form 
$$N_{\sf c}(x)=\Delta^{p}s_1\cdots s_r,$$
where $p\in \mathbb Z$ and $s_1,\ldots, s_r$ are simple elements with $s_1\neq \Delta, s_r\neq 1$ such that (provided $r\geqslant 2$) for each $i=1,\ldots, r-1$, the pair $(s_{i},s_{i+1})$ is left-weighted. We call $N_{\sf c}(x)$ the classical (left) normal form of $x$.

\end{prop-defn}

The notion of left-weightedness for the usual Garside structure is understood as follows. The \emph{starting set} and the \emph{finishing set} of a simple element $s$ are defined by 
$$S(s)=\{i\in \{1,\ldots,n-1\}\: | \:  \sigma_{i}^{-1} s \in B_{n}^{+}\},$$
$$F(s)=\{i\in \{1,\ldots, n-1\}\: | \: s\sigma_{i}^{-1} \in B_{n}^{+}\},$$
respectively. In terms of crossings of braid diagrams, $i\in S(x)$ ($i\in F(x)$, respectively) if and only if the strands numbered $i$ and $i+1$ at the beginning of $x$ (at the end of $x$, respectively) do cross in $x$. 

\begin{proposition}\cite{em} 
An ordered pair of two simple elements $(s,s')$ is left-weighted if and only if $S(s')\subset F(s)$.
\end{proposition}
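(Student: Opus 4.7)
The plan is to reduce left-weightedness to an atomic criterion on the generators $\sigma_i$, and then to translate that criterion into the starting/finishing set language.

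First I would observe that $s$ is a left-divisor of $\Delta$ (because $s$ is simple) and trivially of $ss'$, so $s \preccurlyeq_{\sf c} \Delta \wedge_{\sf c}(ss')$ holds unconditionally. Hence $(s,s')$ fails to be left-weighted precisely when this inequality is strict. Since $(B_n^+,\preccurlyeq_{\sf c})$ is atomic with atoms the Artin generators $\sigma_1,\ldots,\sigma_{n-1}$, any strict enlargement of $s$ within the common-prefix set of $\Delta$ and $ss'$ admits a refinement of the form $s\sigma_i$ that still lies below both. Therefore $(s,s')$ is left-weighted if and only if there is no index $i$ such that simultaneously $s\sigma_i \preccurlyeq_{\sf c} \Delta$ and $s\sigma_i \preccurlyeq_{\sf c} ss'$.

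Next I would interpret each of these two prefix conditions on $s\sigma_i$. Left-cancelling, $s\sigma_i \preccurlyeq_{\sf c} ss'$ is equivalent to $\sigma_i \preccurlyeq_{\sf c} s'$, which by the definition of the starting set means $i \in S(s')$. On the other hand, $s\sigma_i \preccurlyeq_{\sf c} \Delta$ says exactly that $s\sigma_i$ is still a simple element; by the crossing characterization of simple elements (each pair of strands crosses at most once), appending $\sigma_i$ to the right of the simple braid $s$ crosses the two strands ending at the bottom at positions $i$ and $i+1$, and this keeps the braid simple if and only if those strands were not already crossing in $s$, i.e.\ if and only if $i \notin F(s)$. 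Combining the two translations, $(s,s')$ fails to be left-weighted iff some $i$ satisfies $i \in S(s')$ and $i \notin F(s)$; equivalently, $(s,s')$ is left-weighted iff $S(s') \subseteq F(s)$.

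The step requiring most care is the atomic reduction: one must know both that $B_n^+$ is atomic (so any strict left-extension of $s$ within the lattice passes through some $s\sigma_i$) and that every positive left-divisor of a simple element is itself simple (so the refined candidate $s\sigma_i$ automatically satisfies the first prefix condition on $\Delta$). Once these two general Garside facts are in hand, the passage to the starting and finishing set formulation is a direct unwinding of the definitions recalled just above the proposition, together with the geometric description of simple braids.
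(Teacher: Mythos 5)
Your proof is correct. The paper itself gives no argument here --- it imports the statement from ElRifai--Morton \cite{em} and only records the informal gloss that ``no crossing $\sigma$ from $s'$ can be moved to $s$ in such a way that $s\sigma$ is still simple,'' which is exactly the atomic criterion you derive: your reduction via atomicity and left-cancellativity of $B_n^+$, followed by the translation $s\sigma_i\preccurlyeq_{\sf c} ss'\Leftrightarrow i\in S(s')$ and $s\sigma_i\preccurlyeq_{\sf c}\Delta\Leftrightarrow i\notin F(s)$, is the standard proof and fills in the citation faithfully.
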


Thus, in terms of crossings the left-weightedness condition says that no crossing $\sigma$ from $s'$ can be moved to $s$ in such a way that $s\sigma$ is still simple.

\subsection{The dual Garside structure}\label{section:garsidedual}

Let $B_{n}^{+\ast}$ be the \emph{monoid of dual positive braids}, generated by positive powers of all elements in the family $\{a_{i,j}\}_{1\leqslant i<j\leqslant n}$ and  $\delta=\sigma_{n-1}\cdots\sigma_{2}\sigma_{1}$ be the braid corresponding to the clockwise rotation of all strands by one notch.

Birman, Ko and Lee \cite{bkldual} showed that $(B_n^{+\ast},\delta)$ is another Garside structure for the braid group. In particular the monoid $B_n^{+\ast}$ induces a lattice order on $B_n$, which we denote by $\preccurlyeq_{\sf d}$ ($x\preccurlyeq_{\sf d} y \Leftrightarrow x^{-1}y\in B_n^{+\ast}$) and 
the dual positive divisors of~$\delta$ (with respect to $\preccurlyeq_{\sf d}$) form a finite generating set 
called the set of \emph{dual simple elements}. The pair $(B_n^{+\ast},\delta)$ is called the \emph{dual Garside structure} of the braid group.

 The notion of left-weightedness is defined in the same way as in the classical case: an ordered pair of dual simple elements $(d,d')$ is left-weighted if $\delta \wedge_{\sf d} (dd') = 
d$, here $\wedge_{\sf d}$ is the greatest common divisor with respect to $\preccurlyeq_{\sf d}$. Then we have, analogous to the classical left normal form, the \emph{dual left normal form}.

\begin{prop-defn}\cite{bkldual}
Let $x\in B_n$. There exists a unique decomposition of $x$ of the form 
$$ N_{\sf d}(x)=\delta^{p} d_1\cdots d_r,$$
 where $p\in \mathbb Z$ and $d_1,\ldots,d_r$ are dual simple elements with $d_{1} \neq \delta$, $d_r\neq 1$ such that (provided $r\geqslant 2$) for each $i=1,\ldots, r-1$, the pair $(d_i,d_{i+1})$ is left-weighted. We call $N_{\sf d}(x)$ the dual (left) normal form of $x$.
\end{prop-defn}

The dual simple elements can be more easily described and studied viewing them as mapping classes of the punctured disk $\mathbb D_n$. To this end we isotope the latter to the following model:
$$\{z\in \mathbb C, |z|\leqslant 2\}-\{p_i= e^{\sqrt{-1}\frac{\pi}{n}(n+1-2i)},i=1,\ldots,n\};$$
we denote by $\Gamma$ the circle $|z|=1$ along which the punctures are placed. For simplicity the $i$th puncture will be denoted by $i$ instead of $p_i$ and for $i,j\in \{1,\ldots,n\}$, we also denote by $(i,j)$ the arc of $\Gamma$ described by the move of the puncture $i$ clockwise along $\Gamma$ until the position $j$. The generator $a_{i,j}$ is then a clockwise (left-handed) half Dehn twist along the chord segment connecting the punctures $i$ and $j$.

Let us now describe the set of dual simple elements. For $r=2,\ldots,n$, take $r$ punctures $i_1,\ldots,i_r$ in this order when running along $\Gamma$ clockwise from $i_1$ to $i_r$. All braid words obtained as a concatenation of $r-1$ consecutive letters taken from the sequence 
$(a_{i_r,i_1},a_{i_{r-1},i_{r}},\ldots,a_{i_1,i_2})$ in this order, up to cyclic permutation, represent the same braid $P$.

Geometrically, as a mapping class of $\mathbb D_n$, the braid $P$ corresponds to a clockwise rotation by one notch of a neighborhood of the convex polygon in $\mathbb D_n$ whose vertices are the punctures $i_1,\ldots,i_r$. Due to this correspondence,  we call such a braid $P$ a \emph{(convex) polygon}
and we will often confuse $P$ with the corresponding convex polygon in $\mathbb D_n$. 
For example, the dual Garside element $\delta$ corresponds to the polygon which is the convex hull of all punctures.
Notice that when $r=2$, the polygon is degenerated and corresponds to a single letter $a_{i_1,i_2}$ (a single half Dehn twist about the chord segment joining the two punctures). 
Two polygons which are disjoint commute; their respective actions on the disk are totally independent from each other. Any dual simple element can be written in a unique manner as a product of disjoint polygons (up to permutation of the factors) \cite{bkldual}. 

The notion of left-weightedness in the dual context can be described as follows. Let $a_{i,j}$ and $a_{k,l}$ be two generators. 
We say that $a_{k,l}$ \emph{obstructs} $a_{i,j}$ and we write $a_{k,l}\vdash a_{i,j}$ if $k\in (j,i-1)$ and $l\in (i,j-1)$.
The relation $\vdash$ is not symmetric: $a_{k,l}\vdash a_{i,j}$ does not imply $a_{i,j}\vdash a_{k,l}$.

\begin{proposition}
\label{prop:left-weighted}\cite{bkldual}
Let $d=P_{1}\cdots P_{r}$ and $d'=Q_1\cdots Q_{r'}$ be dual simple elements expressed as products of disjoint polygons. 
Then the pair $(d,d')$ is left-weighted if and only if for any two vertices $i$ and $j$ of a polygon among $Q_1,\ldots,Q_{r'}$, there exists a polygon among $P_1,\ldots,P_{r}$ having two vertices $k$ and $l$ such that $a_{k,l}\vdash a_{i,j}$. 
\end{proposition}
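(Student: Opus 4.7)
The plan is to reduce left-weightedness first to a test on atoms of the dual monoid, and then to translate this test into the geometric obstruction condition involving $\vdash$.

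By general Garside theory, $(d, d')$ is left-weighted, i.e.\ $\delta \wedge_{\sf d}(dd') = d$, if and only if for every dual atom $a_{i,j}$ with $a_{i,j} \preccurlyeq_{\sf d} d'$, the product $d \cdot a_{i,j}$ fails to be a dual simple element. Indeed, one always has $d \preccurlyeq_{\sf d} \delta \wedge_{\sf d}(dd')$, and left-weightedness fails exactly when some dual simple $s \neq 1$ satisfies $ds \preccurlyeq_{\sf d} \delta$ and $s \preccurlyeq_{\sf d} d'$; the smallest such $s$ can be taken to be an atom of $B_n^{+*}$, which is precisely one of the $a_{i,j}$. So the proof reduces to two sub-problems: (a) identify the atoms left-dividing $d'$, and (b) decide when $d \cdot a_{i,j}$ is dual simple.

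For (a), I would show that $a_{i,j} \preccurlyeq_{\sf d} d'$ if and only if $i$ and $j$ are vertices of a single polygon $Q_m$ in the decomposition of $d'$. The ``only if'' direction uses that disjoint polygons commute and that $a_{i,j}$ moves only the punctures $i, j$, so it must sit inside a single polygon; the ``if'' direction uses the BKL identification of dual simples with non-crossing partitions: any chord between two vertices of $Q_m$ refines its vertex cycle into a non-crossing partition of $Q_m$'s vertex set, hence $a_{i,j}$ left-divides $Q_m$, and therefore also $d'$.

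For (b)---the key substantive claim---I would argue that, for vertices $i, j$ of a common polygon of $d'$, the product $d \cdot a_{i,j}$ is dual simple if and only if no polygon $P_s$ of $d$ contains two vertices $k, l$ with $a_{k,l} \vdash a_{i,j}$. Geometrically, $d$ rotates neighborhoods of its polygons clockwise by one notch, and $a_{i,j}$ is a half-twist along the chord $(i, j)$; the composite mapping class is a dual simple precisely when the polygons of $d$ and the chord $(i,j)$ can be simultaneously realized by a non-crossing diagram. The obstruction arises exactly when some edge $(k,l)$ of a $P_s$ crosses $(i,j)$ in the directed way encoded by $\vdash$, namely $k$ on the clockwise arc $(j, i-1)$ and $l$ on the clockwise arc $(i, j-1)$. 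Combining (a), (b) with the initial reduction then yields the stated equivalence.

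The main obstacle is the equivalence in (b). One direction---an obstructing $\vdash$-pair forces $d \cdot a_{i,j}$ to be non-simple---is relatively direct from the non-crossing-partition picture, since such a pair produces a genuine crossing of chords that no dual simple can accommodate. The converse, showing that every failure of $d \cdot a_{i,j}$ to be dual simple must come from such an obstructing pair \emph{within a single polygon} $P_s$ of $d$, requires a careful case analysis matching each combinatorial crossing in the product $d \cdot a_{i,j}$ to a specific pair of vertices belonging to one of the polygons $P_s$, which is the technical heart of the argument.
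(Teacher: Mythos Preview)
The paper does not give its own proof of this proposition: it is simply quoted from \cite{bkldual} as a known characterization of dual left-weightedness, so there is nothing to compare your argument against inside the paper.

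Your outline is sound. The reduction to atoms is standard Garside theory, and your description of the atoms dividing $d'$ in step~(a) is exactly the non-crossing partition statement from \cite{bkldual}. For step~(b), the cleanest way to carry out the ``converse'' direction you flag as the main obstacle is not a case analysis on crossings but rather the Kreweras complement: $d\cdot a_{i,j}$ is dual simple if and only if $a_{i,j}\preccurlyeq_{\sf d} d^{-1}\delta$, and the right complement $d^{-1}\delta$ corresponds to the Kreweras complement of the non-crossing partition of $d$. Two punctures $i,j$ lie in the same block of the Kreweras complement precisely when no block of $d$ separates them, i.e.\ when no polygon $P_s$ contains vertices $k,l$ on opposite sides of the chord $ij$ (allowing coincidence at an endpoint), which is exactly the negation of the $\vdash$ condition once one unpacks the arc conventions $(j,i-1)$ and $(i,j-1)$. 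This replaces the case analysis by a single structural fact about the lattice of non-crossing partitions and makes both directions of~(b) immediate.
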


\section{Burau representation and the classical Garside structure of $B_4$}\label{section:classical}

This section originated in trying to exploit a result by Lee and Song which can be stated as follows: 

\begin{theorem}\cite{leesong}
If non-trivial, the kernel of the Burau representation $\rho_4$ is a pseudo-Anosov subgroup of $B_4$. 
\end{theorem}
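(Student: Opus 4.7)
The plan is to apply the Nielsen--Thurston classification: every element of $B_4$, identified with the mapping class group of $\mathbb D_4$, is either periodic, reducible, or pseudo-Anosov, and these classes are mutually exclusive. Since $\ker\rho_4$ is a normal subgroup and each Nielsen--Thurston class is invariant under conjugation, it suffices to show that no non-trivial periodic braid and no non-trivial reducible braid lies in $\ker\rho_4$.

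For the periodic case, I would invoke the Ker\'ekj\'art\'o-type description of periodic elements of $B_n$: every periodic $4$-braid is conjugate to a power of one of the two ``rotation'' braids $\alpha=\sigma_1\sigma_2\sigma_3$ and $\beta=\sigma_1\sigma_2\sigma_3\sigma_1\sigma_2$, which satisfy $\alpha^4=\beta^3=\Delta^2$. A direct computation of the matrices $\rho_4(\alpha)$ and $\rho_4(\beta)$ shows that their eigenvalues are products of roots of unity with fractional powers of $q$ (alternatively, one can check that $\det\rho_4(\alpha)$ and $\det\rho_4(\beta)$ are non-trivial Laurent monomials in $q$). Consequently no non-trivial power of either element is the identity, and by normality of $\ker\rho_4$ no non-trivial conjugate is either.

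For the reducible case, let $x\in\ker\rho_4$ be non-trivial and reducible. After conjugation (which preserves membership in the kernel), one can arrange that $x$ preserves a standard reducing system of essential simple closed curves in $\mathbb D_4$; the possible configurations are very limited, namely one curve enclosing $2$ or $3$ punctures, two disjoint curves enclosing a pair each, or a nested configuration. Cutting along such curves yields a block-triangular decomposition of $\rho_4(x)$ whose diagonal blocks are Burau matrices for the induced braids on the inner sub-disks and on the outer disk (with the inner tubes collapsed to marked points), decorated by scalar ``twist'' factors encoding powers of the core curves of the tubular neighbourhoods of the reducing circles. Since $\rho_2$ and $\rho_3$ are faithful \cite{magnuspeluso}, the vanishing of all diagonal blocks forces each induced braid to be trivial, and the vanishing of each scalar twist forces the corresponding tube rotation to be trivial; hence $x=1$, contradicting non-triviality.

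The main obstacle is making the reducible argument genuinely rigorous: one has to express $\rho_4$ in a basis of $H_1$ adapted to each reducing system, identify the diagonal blocks as honest Burau matrices of the smaller braid groups $B_2$ and $B_3$ (and not twisted variants thereof), and verify that the twist scalars around each reducing curve are independent of the motion of the punctures inside the tubes and appear non-trivially in $\rho_4(x)$. Once this block-decomposition bookkeeping is settled, the faithfulness of $\rho_2$ and $\rho_3$ together with the non-triviality of the twist factors closes the proof.
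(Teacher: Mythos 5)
This statement is not proved in the paper at all: it is imported verbatim from Lee and Song \cite{leesong}, so there is no internal proof to compare against. Your overall strategy --- run the Nielsen--Thurston trichotomy and rule out non-trivial periodic and reducible elements of $\ker\rho_4$ --- is the natural one and is essentially the strategy of the cited source, but as written your argument has one small error and one genuine gap. The small error is in the periodic case: the second periodic normal form in $B_4$ is (a conjugate of a power of) $\sigma_1\sigma_2\sigma_3\sigma_1$, not $\sigma_1\sigma_2\sigma_3\sigma_1\sigma_2$; your candidate has exponent sum $5$, so its cube has exponent sum $15\neq 12=e(\Delta^2)$ and cannot equal $\Delta^2$. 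The case itself is easy to repair: any periodic $x$ satisfies $x^m=\Delta^{2k}$, and since $\rho_4(\Delta^2)=q^4I_3$ and $B_4$ is torsion-free, $x\in\ker\rho_4$ forces $k=0$ and then $x=1$; alternatively $\det\rho_4(x)=\pm q^{e(x)}$ kills anything with non-zero exponent sum.

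The genuine gap is the reducible case, precisely at the point you flag as ``bookkeeping.'' The diagonal blocks of the decomposition adapted to a reducing system are \emph{not} honest Burau matrices of $B_2$ and $B_3$. The Burau representation is built from the infinite cyclic cover determined by total winding number; when a tube enclosing $k$ punctures is collapsed to a fat puncture, a loop around that puncture has winding number $k$, so the induced exterior representation is a specialization of a multivariable (Gassner/coloured Burau) representation in which one strand carries weight $q^k$ --- a $q$-twisted variant of $\rho_3$ or $\rho_2$, not the representation whose faithfulness you invoke from \cite{magnuspeluso}. Faithfulness does not transfer automatically to these twisted versions, and establishing it (together with verifying that the twist scalars around the reducing curves genuinely survive into $\rho_4(x)$ and cannot cancel against the exterior block) is the actual content of the Lee--Song argument, not an afterthought. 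You also need to handle reducible braids that permute, rather than fix, the curves of their canonical reduction system (pass to a power and use torsion-freeness). As it stands, the proof is incomplete at its essential step.
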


Pseudo-Anosov braids are mapping classes of the punctured disk $\mathbb D_n$ represented by pseudo-Anosov homeomorphisms, those which are neither a root of the full twist $\Delta^2$, nor permute a family of disjoint isotopy classes of simple closed curves in $\mathbb D_n$ \cite{farbmargalit}. 

An important result relating pseudo-Anosov braids and Garside theory asserts that any pseudo-Anosov braid admits a power which is conjugate to a rigid braid \cite{bggm1}, meaning that it is cyclically left-weighted: the ordered pair formed by the last and the first factor is left-weighted. 
Moreover, up to taking further power we may assume this rigid braid to have even infimum.

Observe now that the Burau matrix $\rho_4(\Delta^2)$ is the homothety of ratio $q^4$: $\rho_4(\Delta^2)=q^4I_3$. 
It follows that the Burau representation $\rho_4$ is not faithful if and only if there exists a rigid pseudo-Anosov positive braid
with infimum 0 whose Burau matrix is an homothety of ratio $q^{4p}$ for some positive integer $p$.
This motivates to explore some conditions under which the Burau matrix is not an homothety.

Let $x\in B_4$. For $i=1,2,3$ we define 
$M_i(x)=\max \{M(\rho_4(x)_{ij}), 1\leqslant j\leqslant 3\},$ in words the maximal degree of the variable $q$ among the Laurent polynomials
apparing in the $i$th row of the reduced Burau matrix of $x$.  

We recall the following computations (see Section \ref{section:burau}):
$$\rho_4(\sigma_1)=
\begin{pmatrix}
-q & 0 & 0\\
1 & 1 & 0\\
0 & 0 & 1\\
\end{pmatrix};
\rho_4(\sigma_2)=
\begin{pmatrix}
1 & q & 0\\
0 & -q & 0\\
0 & 1 & 1\\
\end{pmatrix};
\rho_4(\sigma_{3}) =
\begin{pmatrix}
1 &  0 & 0 \\
0 & 1 & q \\
0 & 0  & -q \\
\end{pmatrix}.
$$

\begin{lemma}\label{lem:classicalLemme}
Let $x\in B_4$. Suppose $x$ has infimum 0 and $s_1\cdots s_r$ ($r\geqslant 2$) is the classical normal form of $x$. Suppose that for all $i=1,\ldots,r$, $s_i\neq \sigma_2\sigma_1\sigma_3$. 
Denote simply by $M_i$ the integer $M_i(x)$. 
Then we have the following: 
\begin{itemize}
\item If $S(s_1)=\{1\}$ then $M_1>M_2$ and $M_1>M_3+1$, 
\item if $S(s_1)=\{2\}$ then $M_2\geqslant M_1$ and $M_2>M_3$, 
\item if $S(s_1)=\{3\}$ then $M_3\geqslant M_1$ and $M_3\geqslant M_2$, 
\item if $S(s_1)=\{1,2\}$ then either $M_1>M_2$ and $M_1>M_3+1$, or $M_2\geqslant M_1$ and $M_2>M_3$,
\item if $S(s_1)=\{2,3\}$ then either $M_2\geqslant M_1$ and $M_2>M_3$, or $M_3\geqslant M_1$ and $M_3\geqslant M_2$, 
\item if $S(s_1)=\{1,3\}$ then either $M_1>M_2$ and $M_1>M_3$,
or $M_3\geqslant M_1$ and $M_3\geqslant M_2$.
\end{itemize}
Moreover, the following inequality holds: $\sup_{\sf c}(x)\leqslant M(\rho_4(x))\leqslant 3\sup_{\sf c}(x)$.
\end{lemma}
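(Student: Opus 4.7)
The plan is to prove Lemma \ref{lem:classicalLemme} by induction on the classical canonical length $r$. For the base case $r=1$, one would verify the statement by direct computation, checking each of the 22 simple 4-braids other than $\Delta$ (excluded because $\inf_{\sf c}(x)=0$ combined with $r\geqslant 2$ forces $s_{1}\neq \Delta$) and $\sigma_{2}\sigma_{1}\sigma_{3}$ (excluded by hypothesis). For each such $s_{1}$, one computes $\rho_{4}(s_{1})$ explicitly, reads off the triple $(M_{1}, M_{2}, M_{3})$ together with $S(s_{1})$, and compares with the inequality associated to $S(s_{1})$.

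For the inductive step, I would write $x = y s_{r}$, where $y = s_{1}\cdots s_{r-1}$ is still in classical normal form and shares the first factor $s_{1}$ with $x$; the inductive hypothesis then gives the relevant inequalities on $M_{i}(y)$. Since $\rho_{4}(x) = \rho_{4}(y)\rho_{4}(s_{r})$, the $i$-th row of $\rho_{4}(x)$ is the $i$-th row of $\rho_{4}(y)$ right-multiplied by $\rho_{4}(s_{r})$, and one has to verify that the relevant strict inequalities persist. The left-weightedness of $(s_{r-1}, s_{r})$, namely $S(s_{r})\subset F(s_{r-1})$, is the structural constraint that restricts which $\rho_{4}(s_{r})$ can appear and hence makes the analysis tractable.

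The main obstacle I expect is controlling cancellations between high-degree terms after the multiplication. The ``$+1$'' margin in inequalities such as $M_{1} > M_{3}+1$ is crucial: because the entries of $\rho_{4}(s_{r})$ for simple $s_{r}$ are Laurent polynomials of bounded degree, a strict inequality with sufficient margin on the rows of $\rho_{4}(y)$ cannot be destroyed by a single simple-factor multiplication. The exclusion of $\sigma_{2}\sigma_{1}\sigma_{3}$ is essential in this bookkeeping; direct computation gives
\[ \rho_{4}(\sigma_{2}\sigma_{1}\sigma_{3}) = \begin{pmatrix} 0 & q & q^{2} \\ -q & -q & -q^{2} \\ 1 & 1 & 0 \end{pmatrix}, \]
whose atypical row-degree pattern (the third row has strictly smaller max degree than the $(1,3)$ and $(2,3)$ entries, and the $(1,1)$ entry vanishes) would produce hidden cancellations breaking the induction in several sub-cases, notably those where the ``$+1$'' buffer is tight.

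Finally, the two-sided bound $\sup_{\sf c}(x) \leqslant M(\rho_{4}(x)) \leqslant 3\sup_{\sf c}(x)$ is a byproduct of the main argument. The lower bound follows from the case inequalities: iterating them ensures that at least one $M_{i}$ grows by at least one with each additional simple factor, so $M(\rho_{4}(x)) \geqslant r = \sup_{\sf c}(x)$. The upper bound follows from the estimate $M(\rho_{4}(s))\leqslant 3$ for every simple $s\in B_{4}$ (attained for instance by $\Delta$ and $\sigma_{1}\sigma_{2}\sigma_{3}$), combined with the sub-additivity $M(AB)\leqslant M(A)+M(B)$ applied to the $r$ simple factors of the normal form.
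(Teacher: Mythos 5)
Your proposal contains a genuine structural gap in the inductive step. You peel off the \emph{last} factor, writing $x = y s_r$ with $y=s_1\cdots s_{r-1}$, so that $\rho_4(x)=\rho_4(y)\rho_4(s_r)$ and row $i$ of $\rho_4(x)$ is row $i$ of $\rho_4(y)$ right-multiplied by $\rho_4(s_r)$. But the inductive hypothesis records only the row maxima $M_i(y)$ and their mutual comparisons; under right multiplication, entry $(i,k)$ of the product is $\sum_j \rho_4(y)_{ij}\rho_4(s_r)_{jk}$, so $M_i(x)$ depends on \emph{which column} of row $i$ carries the top degree, information the induction does not carry. Two rows of $\rho_4(y)$ with equal maxima but maxima sitting in different columns can acquire different maxima after multiplying by the same $\rho_4(s_r)$, so the comparisons among the $M_i$ cannot be propagated. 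Moreover, the constraint you invoke, left-weightedness of $(s_{r-1},s_r)$, ties $s_r$ to $s_{r-1}$, whereas the inductive conclusion for $y$ is indexed by $S(s_1)$; nothing links the two. The paper's proof peels off the \emph{first} factor, $x=s_1x'$, and multiplies on the \emph{left}: then row $i$ of $\rho_4(x)$ is a combination of whole rows of $\rho_4(x')$ with coefficients from row $i$ of $\rho_4(s_1)$, so the row maxima $M'_j=M_j(x')$ are exactly the right data, and left-weightedness of $(s_1,s_2)$ forces $S(s_2)\subset F(s_1)$, which is what selects the inductive alternative satisfied by $x'$. Your correct intuitions about cancellation margins and about the pathological degree pattern of $\rho_4(\sigma_2\sigma_1\sigma_3)$ only become usable in that left-to-right setup.

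A second, smaller error: your base case is at $r=1$, but the statement is actually \emph{false} there. For $s_1=\sigma_1$ one has $(M_1,M_2,M_3)=(1,0,0)$, so $M_1>M_3+1$ fails; this is precisely why the lemma assumes $r\geqslant 2$ and why the paper starts the induction at $r=2$ (verified by direct computation over all left-weighted pairs). Your derivation of the two-sided bound $\sup_{\sf c}(x)\leqslant M(\rho_4(x))\leqslant 3\sup_{\sf c}(x)$ is fine in outline (the upper bound by subadditivity of $M$ and $M(\rho_4(s))\leqslant 3$ for simple $s$, the lower bound from per-factor growth), but the lower bound still needs the inductive step to actually establish that growth, so it inherits the gap above.
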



\begin{proof}[Proof of Lemma \ref{lem:classicalLemme}]
The proof is by induction on $r$. 
A direct calculation shows that all conclusions are correct for the case $r=2$. Here we remark that in the case $r=1$ and  $s_{1}=\sigma_{1}$ ($S(s_1)=\{1\}$), the conclusion does not hold since $M_1(\sigma_1)=M_3(\sigma_1)+1$.

Suppose now $r>2$. Write $x=s_1x'$; by induction $x'$ satisfies the conclusions of the lemma. 
We now distinguish 6 cases, according to the possible values of $S(s_1)$. 
In each case, there are several possibilities for 
$s_1$. Each of them leads to conditions on the 
starting set of $s_2$, the first factor of $x'$, because of the left-weightedness condition on the pair $(s_1,s_2)$.
By induction hypothesis this gives relations between the integers $M'_i:=M_i(x')$. 
In each case, using the explicit computation of $\rho_4(s_1)$, we express the integers $M_i=M_i(s_1x')$ in terms of the $M'_i$ and show that they satisfy the expected relations. In each case, the computations to be performed show that 
$M(\rho_4(x'))+1\leqslant M(\rho_4(s_1x'))\leqslant M(\rho_4(x'))+3$; this shows the last claim in the lemma.

We present the cases $S(s_1)=\{2\}$ and $S(s_1)=\{1,3\}$; this will have the advantage to show the failure in the argument when a factor $\sigma_2\sigma_1\sigma_3$ appears. Other cases are proven similarly.

{\bf Case $S(s_1)=\{2\}$.} The simple element $s_1$ is one of the following: $\sigma_2$, $\sigma_2\sigma_1$, $\sigma_2\sigma_3$,
$\sigma_2\sigma_1\sigma_3\sigma_2$ or $\sigma_2\sigma_1\sigma_3$. We treat two examples; again the three others are dealt with similarly.

{\underline{Suppose $s_1=\sigma_2$.}} Then $F(s_1)=\{2\}$ and by left-weightedness $S(s_2)=\{2\}$. By induction, 
we have $M'_2\geqslant M'_1$ and $M'_2>M'_3$. Multiplying $\rho_4(x')$ on the left by $\rho_4(\sigma_2)=\begin{pmatrix}
1 & q & 0\\
0 & -q & 0\\
0 & 1 & 1\\
\end{pmatrix}$, the new degrees $M_i$ in the product satisfy $M_1=M'_2+1$, $M_2=M'_2+1$ and $M_3\leqslant M'_2$ (possibly the terms of highest degrees in the second and third row of $\rho_4(x')$ cancel with each other). 
Therefore we have $M_2=M_1$ and $M_2>M_3$, thus satisfying the expected conditions when $S(s_1)=\{2\}$.  

{\underline{Suppose $s_1=\sigma_2\sigma_1\sigma_3$}}. 
Then $F(s_1)=\{1,3\}$ and by left-weightedness, $S(s_2)=\{1\},\{3\}$ or $\{1,3\}$.
By induction $M'_3\geqslant M'_1,M'_2$ or $M_1>M_2,M_3$ (with possibly $M_1>M_3+1$). 
Computing $\rho_4(\sigma_2\sigma_1\sigma_3)=\begin{pmatrix} 
0 & q & q^2\\
-q & -q & -q^2\\
1 & 1 & 0\\
\end{pmatrix}$ we get in the first case $M_1=M'_3+2$, $M_2=M'_3+2$ and $M_3\leqslant \max(M'_1,M'_2)\leqslant M'_3$; whence $M_2\geqslant M_1$ and $M_2>M_3$. 
In the second case, unless the strongest inequality $M'_1>M'_3+1$ holds, there is no reason why 
a cancellation could not yield $M_3\geqslant M_2$. Therefore the desired conclusion ($M_2>M_3$) possibly does not hold and we see that the argument fails when $\sigma_2\sigma_1\sigma_3$ is a factor of $x$. 

{\bf Case $S(s_1)=\{1,3\}$.} Then $s_1$ is $\sigma_1\sigma_3$, $\sigma_1\sigma_3\sigma_2$, $\sigma_1\sigma_3\sigma_2\sigma_1$, $\sigma_1\sigma_3\sigma_2\sigma_3$ or $\sigma_1\sigma_3\sigma_2\sigma_3\sigma_2$. 

{\underline{Suppose $s_1=\sigma_1\sigma_3$.}} 
We compute $\rho_4(\sigma_1\sigma_3)=\begin{pmatrix}
-q & 0 &0 \\
1 & 1 & q \\
0 & 0 & -q\\
\end{pmatrix}.$
By left-weightedness, we have $S(s_2)\subset \{1,3\}$ and therefore 
by induction the $M'_i$ satisfy 
$M'_1>M'_2,M'_3$ or $M'_3\geqslant M'_1,M'_2$. 
In the first case $x=\sigma_1\sigma_3x'$ satisfies 
$M_1=M'_1+1$, $M_2\leqslant M'_1$ and $M_3=M'_3+1$: 
we have, as expected, $M_1>M_2,M_3$. In the second case, we have $M_1=M'_1+1,M_2=M'_3+1,M_3=M'_3+1$ whence $M_3\geqslant M_1,M_2$. 

{\underline{Suppose $s_1=\sigma_1\sigma_3\sigma_2$.}}
Then we have to check the product of the matrix $\rho_4(\sigma_1\sigma_3\sigma_2)=\begin{pmatrix}
-q & -q^2 & 0 \\
1 & q & q \\
0 & -q & -q\\
\end{pmatrix}$ by $\rho_4(x')$, where the $M'_i$ satisfy by induction $M'_2\geqslant M'_1$ and $M'_2>M'_3$. 
This gives $M_1=M'_2+2$, $M_2=M'_2+1$ and $M_3=M'_2+1$ whence $M_1>M_2,M_3$. 

{\underline{Suppose $s_1=\sigma_1\sigma_3\sigma_2\sigma_1$.}}
Compute $\rho_4(\sigma_1\sigma_3\sigma_2\sigma_1)=\begin{pmatrix}
0 & -q^2 & 0 \\
0 & q & q\\
-q & -q & -q \\
\end{pmatrix}$. 
On the other hand we have by induction one of the following set of conditions on $x'$: 
$M'_1>M'_2$ and $M'_1>M'_3+1$; or $M'_2\geqslant M'_1$ and $M'_2>M'_3$.
In the first case we obtain $M_1=M'_2+2,M_2\leqslant \max(M'_2+1,M'_3+1)$ and $M_3=M'_1+1$ whence 
$M_3>M_1,M_2$.  In the second case we get $M_1=M'_2+2,M_2=M'_2+1$ and $M_3\leqslant M'_2+1$ whence 
$M_1>M_2,M_3$. 

{\underline{Suppose $s_1=\sigma_1\sigma_3\sigma_2\sigma_3$.}}
Compute $\rho_4(\sigma_1\sigma_3\sigma_2\sigma_3)=\begin{pmatrix}
-q & -q^2 & -q^3\\
1 & q & 0 \\
0 & -q & 0\\
\end{pmatrix}.$
By induction hypothesis, as $S(S_2)\subset F(s_1)=\{2,3\}$, we have $M'_2\geqslant M'_1$ and $M'_2>M'_3$ or $M'_3\geqslant M'_1,M'_2$. 
In the first case: 
\begin{itemize}
\item if $M'_2>M'_3+1$ then $M_1=M'_2+2$, $M_2=M'_2+1=M_3$ whence $M_1>M_2,M_3$,
\item if $M'_2=M'_3+1$ then $M_1\leqslant M'_2+2$ and $M_2=M'_2+1=M_3$ whence we get $M_1>M_2,M_3$ if $M_1=M'_2+2$ and $M_3\geqslant M_1,M_2$ if $M_1<M'_2+2$. 
\end{itemize}
In the second case we obtain $M_1=M'_3+3$, $M_2\leqslant M'_2+1$ and $M_3=M'_2+1$ whence $M_1>M_2,M_3$. 

{\underline{Suppose $s_1=\sigma_1\sigma_3\sigma_2\sigma_1\sigma_3$.}}
The reduced Burau matrix of $s_1$ is 
$\begin{pmatrix}
0 & -q^2 & -q^3\\
0 & q & 0 \\
-q & -q & 0\\
\end{pmatrix}$. 
On the other hand $F(s_1)=\{1,3\}$ whence by induction $x'$ satisfies: 
$M'_1>M'_2,M'_3$ or $M'_3\geqslant M'_1,M'_2$. 
In the first case we get $M_1\leqslant \max(M'_2+2,M'_3+3)$, $M_2=M'_2+1$, $M_3=M'_1+1$. This implies $M_3\geqslant M_1,M_2$ provided 
$M'_1>M'_3+1$ holds. If on the contrary $M'_1=M'_3+1$ we can say more about $M_1$ (actually there will be no cancellation there) because the inequality $M'_1>M'_2$ then implies $M'_3+1>M'_2$ whence $M_1=M'_3+3$. This finally shows $M_1>M_2,M_3$. 
In the second case we obtain $M_1=M'_3+3$, $M_2=M'_2+1$ and $M_3\leqslant \max(M'_1+1,M'_2+1)$ whence 
$M_1>M_2,M_3$.

\end{proof}

\begin{example} 
We show that the conclusion for $S(s_1)=\{2\}$ in Lemma \ref{lem:classicalLemme} does not necessarily hold if 
$s_1=\sigma_2\sigma_1\sigma_3$. 
Indeed, let
$x=\sigma_2\sigma_1\sigma_3\cdot\sigma_1\sigma_3\sigma_2\sigma_1\cdot\sigma_1\sigma_2\sigma_1\sigma_3\cdot\sigma_1\sigma_3\cdot\sigma_1\sigma_2\cdot \sigma_2$. This braid has infimum 0 and is in normal form as written; the degrees of the entries of its Burau matrix are indicated in the following matrix:
$\begin{pmatrix} 5 & 8 & 7\\
6 & 7 & 7\\
5 & 7 & 2\\
\end{pmatrix}$.
\end{example}
Lemma \ref{lem:classicalLemme} leads to the following non-vanishing criterion for the reduced Burau representation of 4-braids. 

\begin{theorem}\label{theorem:main_classical}
If the classical left normal form of a 4-braid $x$ 
does not contain a factor $(\sigma_{2}\sigma_{1}\sigma_{3})$ 
then $\rho_{4}(x) \neq 1$.   
\end{theorem}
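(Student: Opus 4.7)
The plan is to argue by contradiction: suppose $\rho_{4}(x)=1$ for some nontrivial $x\in B_{4}$ whose classical normal form $N_{\sf c}(x)=\Delta^{p}s_{1}\cdots s_{r}$ avoids the factor $\sigma_{2}\sigma_{1}\sigma_{3}$. First, since $\Delta^{2}$ is central with $\rho_{4}(\Delta^{2})=q^{4}I$, I replace $x$ by $\tilde{x}=x\Delta^{2m}$ for $m$ chosen minimal so that $\tilde{p}=p+2m\in\{0,1\}$. The simple factors, and hence the ``no $\sigma_{2}\sigma_{1}\sigma_{3}$'' hypothesis, are unchanged, while $\rho_{4}(\tilde{x})=q^{4m}I$.

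I then distinguish two cases according to $\tilde{p}$. If $\tilde{p}=0$, the positive braid $\tilde{x}=s_{1}\cdots s_{r}$ has Burau matrix $q^{4m}I$, a scalar matrix whose three row-maxima all equal $4m$. If $\tilde{p}=1$, an explicit computation (using the formulas for $\rho_{4}(\sigma_{i})$ recalled before the lemma) gives that $\rho_{4}(\Delta)$ is anti-diagonal with entries $-q^{3}, -q^{2}, -q$, so that $\rho_{4}(s_{1}\cdots s_{r})=q^{4(m-1)}\rho_{4}(\Delta)$ is anti-diagonal with row-maxima satisfying $M_{1}=M_{2}+1=M_{3}+2$.

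Next I apply Lemma \ref{lem:classicalLemme} to the positive braid $s_{1}\cdots s_{r}$ (a direct check disposes of the trivial cases $r\leqslant 1$, using that $\rho_{4}$ of a non-identity simple element has positive $M$). In each of the six alternatives, indexed by $S(s_{1})$, the constraints on $(M_{1},M_{2},M_{3})$ are confronted with the scalar or anti-diagonal pattern determined above. The strict inequalities in the cases $S(s_{1})\in\{\{1\},\{2\},\{1,2\}\}$ rule out the equality $M_{1}=M_{2}=M_{3}$ in the scalar case; symmetrically, in the anti-diagonal case (where $M_{1}>M_{2}>M_{3}$), the cases $S(s_{1})\in\{\{2\},\{3\},\{2,3\}\}$ are incompatible. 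The only remaining candidates are $S(s_{1})\ni 3$ in the scalar case and $S(s_{1})\ni 1$ in the anti-diagonal case.

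The hardest step is to close these residual cases, since the lemma there only provides non-strict inequalities (for instance, $S(s_{1})=\{3\}$ allows $M_{3}=M_{1}=M_{2}$ \emph{a priori}). To handle them I would invoke the automorphism $\phi\colon \sigma_{i}\mapsto \sigma_{4-i}$ of $B_{4}$, which fixes $\sigma_{2}\sigma_{1}\sigma_{3}$ (because $\sigma_{1}$ and $\sigma_{3}$ commute) and therefore preserves the hypothesis, while interchanging the roles of the starting sets $\{1\}$ and $\{3\}$. Since $\rho_{4}\circ\phi$ is conjugate to $\rho_{4}$ by a fixed invertible matrix, the strict inequalities available for $S(s_{1})=\{1\}$ can be transported to $S(s_{1})=\{3\}$ (and similarly for $\{1,3\}$ and $\{2,3\}$), completing the contradiction.
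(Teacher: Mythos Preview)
Your overall strategy---reduce to $\tilde p\in\{0,1\}$, read off the row-maxima pattern of $\rho_4(s_1\cdots s_r)$, and confront it with Lemma~\ref{lem:classicalLemme}---is exactly the one the paper uses, and your use of the involution $\phi\colon\sigma_i\mapsto\sigma_{4-i}$ is essentially the paper's ``conjugate by $\Delta$ if necessary'' step (recall $\Delta\sigma_i\Delta^{-1}=\sigma_{4-i}$). That part is fine and disposes of $S(s_1)\in\{\{1\},\{2\},\{3\},\{1,2\},\{2,3\}\}$ in both the scalar and the anti-diagonal scenarios.

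The gap is the case $S(s_1)=\{1,3\}$. This set is \emph{fixed} by $\phi$, so the symmetry buys you nothing there, and the lemma alone is not sharp enough: in the scalar case $M_1=M_2=M_3$ the alternative ``$M_3\geqslant M_1$ and $M_3\geqslant M_2$'' survives, and in the anti-diagonal case $M_1=M_2+1=M_3+2$ the alternative ``$M_1>M_2$ and $M_1>M_3$'' survives. Your last sentence (``similarly for $\{1,3\}$ and $\{2,3\}$'') is therefore not justified. This is precisely the residual case the paper has to treat by hand: depending on which of the five simples with $S=\{1,3\}$ occurs as $s_1$ (namely $\sigma_1\sigma_3$, $\sigma_1\sigma_3\sigma_2$, $\sigma_1\sigma_3\sigma_2\sigma_1$, $\sigma_1\sigma_3\sigma_2\sigma_3$, $\sigma_1\sigma_3\sigma_2\sigma_1\sigma_3$), one multiplies $x$ on one or both sides by short carefully chosen braids so that the new normal form no longer starts with $S=\{1,3\}$, and then re-applies the lemma to derive a contradiction from the explicit Burau matrix of the short multiplier. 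Without such an ad hoc step for $S(s_1)=\{1,3\}$, your argument does not close.
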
 

\begin{proof}
Let $N_{\sf c}(x)=\Delta^{p}s_{1}\cdots s_{r}$.
It is easy to check that if $r\leq 2$, then $\rho_4(x) \neq 1$ so we may assume $r > 2$.

First we observe that 
$$ \rho_4(\Delta)=
\begin{pmatrix}
0 & 0 & -q^{3}\\
0 & -q^{2} & 0\\
-q & 0 & 0\\
\end{pmatrix}$$
hence $M_{1}(\Delta x) = M_{3}(x)+3$, $M_{2}(\Delta x) = M_{2}(x)+2$ and $M_{3}(\Delta x) = M_{1}(x)+1$.

Assume that $S(s_{1})\neq \{1,3\}$.
If $p$ is even, then by conjugating by $\Delta$ if necessary, we may assume that $S(s_{1})=\{1\}, \{2\}$, or $\{1,2\}$. By Lemma \ref{lem:classicalLemme}, $\rho_4(s_1\cdots s_r)$ is not an homothety hence $\rho_4(\Delta^ps_1\cdots s_r) \neq 1$. If $p$ is odd, we may assume similarly that $S(s_{1})=\{2\}, \{3\}$, or $\{2,3\}$ hence by Lemma \ref{lem:classicalLemme}, 
$M_2(s_1\cdots s_r)\geqslant M_1(s_1\cdots s_r)$ or $M_3(s_1\cdots s_r)\geqslant M_1(s_1\cdots s_r)$. 
On the other hand, $\rho_4(x)=1$ implies 
$$M_{3}(s_{1}\cdots s_{r})+2 = M_{2}(s_{1}\cdots s_{r})+1 = M_{1}(s_{1}\cdots s_{r}),$$
which is a contradiction.

Now we consider the case $S(s_{1})=\{1,3\}$. Assume for a contradiction that $\rho_4(x)=1$. This implies in particular $M_{i}(yxy')=M_{i}(yy')$ for any 4-braids $y$ and $y'$. We deduce a contradiction by finding appropriate braids $y$ and $y'$. 

\textbf{Case 1:} $s_{1} = \sigma_{1}\sigma_{3}$.
If $p$ is even, put $y=\Delta\sigma_{2}\sigma_{1}\sigma_{3}\sigma_{2}$: $N_{\sf c}((\Delta\sigma_{2}\sigma_{1}\sigma_{3}\sigma_{2})x) = \Delta^{p+2} s_{2}\cdots s_{r}$. By direct calculation 
and under our hypothesis that $\rho_4(x)=Id$,
$$\rho_4(\Delta^{p+2} s_{2}\cdots s_{r})=\rho_4(\Delta\sigma_{2}\sigma_{1}\sigma_{3}\sigma_{2})=
\begin{pmatrix}
-q^3 & 0 & 0\\
q^3 & q^{4} & q^{4}\\
0 & 0 & -q^3\\
\end{pmatrix}.$$
It follows that $M_2(s_2\ldots s_r)=M_1(s_2\ldots s_r)+1=M_3(s_2\ldots s_r)+1$; contradicting Lemma \ref{lem:classicalLemme}
applied with $S(s_2)\subset \{1,3\}$ (which implies in particular $M_2<M_1$ or $M_2\leqslant M_3$). 

If $p$ is even, choose $y=\sigma_{2}\sigma_{1}\sigma_{3}\sigma_{2}$: $N_{\sf c}((\sigma_{2}\sigma_{1}\sigma_{3}\sigma_{2})x) = \Delta^{p+1} s_{2}\cdots s_{r}$. By direct calculation 
$$\rho_4(\Delta^{p+1} s_{2}\cdots s_{r})=\rho_4(\sigma_{2}\sigma_{1}\sigma_{3}\sigma_{2})=
\begin{pmatrix}
0 & 0 & q^{2}\\
-q & -q^{2} & -q^{2}\\
1 & 0 & 0\\
\end{pmatrix}.$$
Hence, $M_{1}(s_2\cdots s_r)=M_{2}(s_1\cdots s_r)=M_{3}(s_2\cdots s_r)+2$, 
contradicting Lemma \ref{lem:classicalLemme} applied with $S(s_2)\subset\{1,3\}$.

\textbf{Case 2:} $s_{1} = \sigma_{1}\sigma_{3}\sigma_{2}$, or 
$\sigma_{1}\sigma_{3}\sigma_{2}\sigma_{3}\sigma_{1}$.

Suppose first that $2 \in F(s_{r})$; then put $y'=\sigma_2$ if $p$ is even and $y'=\Delta\sigma_2$ if $p$ is odd.
Then $$N_{\sf c}(xy')=
\begin{cases}
\Delta^{p}s_{1}\cdots s_{r} (\sigma_{2}) & {\text{if}}\ \ p\ \ \text{is even}\\
\Delta^{p+1}(\Delta^{-1}s_1\Delta)\cdots (\Delta^{-1}s_r\Delta)\sigma_2 & {\text{if}}\ \ p\ \ \text{is odd}\\
\end{cases}$$
Observe that $S(\Delta^{-1}s_1\Delta)=\{1,3\}$. 
Now because of Lemma \ref{lem:classicalLemme} we must have in either case $M_2(xy')<M_1(xy')$ or 
$M_2(xy')\leqslant M_3(xy')$. But on the other hand the calculations 
of $\rho_4(\sigma_2)$ already given as well as $$\rho_4(\Delta\sigma_2)=\begin{pmatrix} 0 & -q^3 & -q^3 \\
0 & q^3 & 0 \\
-q & -q^2 & 0\\
\end{pmatrix}$$ yield the expected contradiction. 

Assume then that $2 \not \in F(s_{r})$. Conjugating by $\Delta$ if necessary, we may assume $1 \in F(s_{r})$ (and $s_1$ is unchanged). 
Then $N_{\sf c}(\sigma_{1}^{-1}x \sigma_{1})=\Delta^ps'_1\ldots s_r\sigma_1$, where $s'_1=(\Delta^{-p}\sigma_1\Delta^p)^{-1}s_1$
satisfies $F(s'_1)=F(s_1)$. 
But as we have already seen $\sigma_{1}^{-1}x \sigma_{1}$ cannot be sent by $\rho_4$ to the identity matrix because $S(s'_1)$ is not $\{1,3\}$.  

\textbf{Case 3:} $s_{1}=\sigma_{1}\sigma_{3}\sigma_{2}\sigma_{1}$, or $\sigma_{1}\sigma_{3}\sigma_{2}\sigma_{3}$

If $ 2 \in F(s_{r})$ then the same argument as Case 2 applies. 
Conjugating by $\Delta$ if necessary, we may assume $1 \in F(s_{r})$. 
\begin{itemize}
\item If $p$ is odd and $s_{1}=\sigma_{1}\sigma_{3}\sigma_{2}\sigma_{1}$, then consider $\sigma_{1}^{-1}x\sigma_{1}$.
\item If $p$ is even and $s_{1}=\sigma_{1}\sigma_{3}\sigma_{2}\sigma_{1}$, then consider $\sigma_{2}x\sigma_{1}\sigma_{2}$.
\item If $p$ is odd and $s_{1}=\sigma_{1}\sigma_{3}\sigma_{2}\sigma_{3}$, then consider $\sigma_{3}^{-1}x\sigma_{1}$.
\item If $p$ is even and $s_{1}=\sigma_{1}\sigma_{3}\sigma_{2}\sigma_{3}$, then consider $\sigma_{1}^{-1}x\sigma_{1}$.
\end{itemize}
In any case, from Lemma \ref{lem:classicalLemme} we obtain a contradiction.
\end{proof}



\section{Curve diagrams, the wall-crossing labeling and dual Garside length}\label{section:diagrams}

In this section we review a connection between curve diagrams of braids and the dual Garside structure, which was developed in \cite{iw}. Here we will prove a slightly stronger result which explains how to read the dual normal form of a braid $x$ from its curve diagram.

\subsection{Curve diagrams}

Let $E$ (resp. $\overline{E}$) be the oriented arc in $\mathbb D_{n}$ consisting of the real line segment between $p_1$ and $p_{n}$ (resp. $p_0$ and $p_{n}$). Both line segments $\overline{E}$ and $E$ are oriented from left to right. For $i=0,\ldots, n-1$, we denote by $E_i$ the line segment of $\overline E$ connecting $p_{i}$ and $p_{i+1}$. See Figure \ref{fig:curvediagram} (a); as a convention, the initial segment $E_0$ is depicted as dashed line.

For $i=1,\ldots,n$, let $W_{i}$ be the vertical line segment in $\mathbb D_{n}$, oriented upwards, which connects the puncture $p_{i}$ and the boundary of $\mathbb D_{n}$ in the upper half-disk $\{z \in \mathbb D^{2} \: | \: \textrm{Im}\, z>0 \: \}$. The lines $W_i$ are called the {\em walls}, and their union $\bigcup_{i} W_{i}$ is denoted $W$. Let $U_i$ be a disk-neighborhood of the puncture $p_i$ and set $U = \bigcup_{i}U_{i}$.  See Figure \ref{fig:curvediagram} (b), (c).

\begin{figure}[htbp]
\centerline{\includegraphics[scale=1.2]{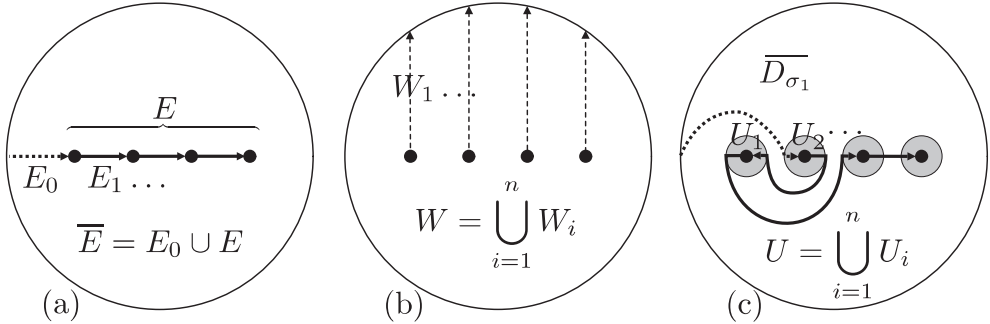}}
   \caption{Curve diagram and walls}
 \label{fig:curvediagram}
\end{figure}

The (\emph{total}) \emph{curve diagram} of a braid $x$ is the respective image of $E$ (or $\widebar E$) under a diffeomorphism $\phi$ representing $x$ which satisfies:
\begin{enumerate}
\item $(\widebar E)\phi$ coincides with the real line on $U$,
\item $(\widebar{E})\phi$ is transverse to $W$ and the number of intersections of $(\widebar{E})\phi$ with $W$ is as small as possible (which is equivalent to saying that $(\widebar E)\phi$ and $W$ do not bound together any bigon \cite{fgrrw}).
\end{enumerate}

The (total) curve diagram is uniquely defined up to isotopy of $\mathbb D_n$ that fixes $\partial \mathbb D_n$. We denote by $D_x$ ($\widebar{D_x}$ respectively) the (total) curve diagram of a braid $x$.
Figure \ref{fig:curvediagram} (c) shows the (total) curve diagram of the braid $\sigma_1\in B_4$; according 
to our previous convention, dashed line represents the image of the initial segment $E_0$. 

An \emph{arc segment} (or simply an arc) of the (total) curve diagram $D_x$ (or $\widebar {D_x}$) is a connected component of $D_x-(W\cup U)$ (or $\widebar{D_x}-(W\cup U)$). 
Notice that an arc segment of $\widebar{D_x}$ is in one of the three following cases:
\begin{itemize}
\item it connects two walls $W_i$ and $W_j$,
\item it connects a wall $W_i$ and a puncture $p_j$ (more precisely the neighborhood $U_j$), 
\item it connects two punctures $p_i$ and $p_j$ (more precisely the neighborhoods $U_i$ and $U_j$).
\end{itemize}
In all cases, $i\neq j$ by construction of the curve diagram. We denote such an arc segment, in either case, by $\wideparen{(ij)}$. 
Unless explicitly specified, we will not care about the orientation of an arc segment; this is reflected in our notation. 

\subsection{Wall-crossing labeling and dual normal form}

We now describe the wall-crossing labeling. To that purpose, we need to introduce a modified version of the curve diagrams. 

Let $x\in B_n$. Around each puncture $p_{i}$ distinct from the image of $p_n$ under $x$, we modify the total curve diagram $\widebar {D_{x}}$ inside the neighborhood $U_i$ as shown in Figure \ref{fig:modcurve} (a). 
We denote the resulting (total) curve diagram by $MD_{x}$ $(\widebar{MD_{x}})$, and call it the (\emph{total}) \emph{modified curve diagram} of $x$.
Figure \ref{fig:modcurve} (b) shows the (total) modified curve diagram of $\sigma_{1}\in B_4$.

\begin{figure}[htbp]
 \begin{center}
\includegraphics*[scale=1]{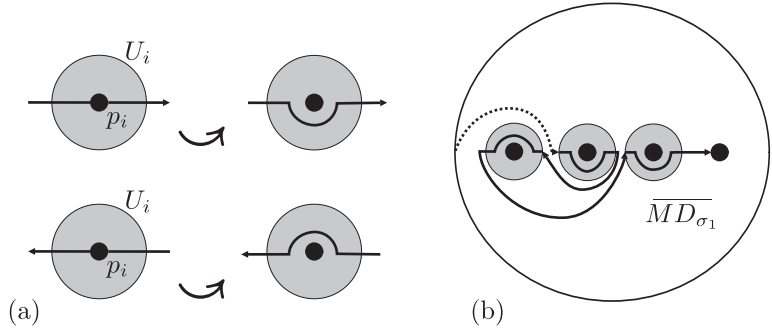}
   \caption{Modified curve diagrams}
 \label{fig:modcurve}
  \end{center}
\end{figure}

Take a smooth parametrization of $\widebar{MD_x}$, viewed as the image of a function $\gamma\co [0,1] \rightarrow \mathbb D^{2}$. For each connected component $\alpha$ of $\widebar{MD_{x}}- (W\cup  U)$, we assign the algebraic intersection number of $W$ and the arc $\gamma([0,v])$, where $v \in [0,1]$ is taken so that $\gamma(v) \in \alpha$. 
Notice that a connected component of $\widebar{MD_{x}}-(W\cup  U)$ naturally corresponds to an arc segment of $\widebar {D_x}$, since 
$\widebar{MD_{x}}$ and $\widebar{D_{x}}$ are identical except on $U$. This allows to attribute a label to each arc segment of $\widebar {D_x}$; this integer-valued labeling is called the \emph{wall-crossing labeling} of $x$. We define
$\LWcr(x)$ and $\SWcr(x)$ as the largest and smallest possible labels occuring in the wall-crossing labeling for arc segments \emph{in the curve diagram} $D_x$, respectively.  
%
%

Notice that to define $\LWcr$ and $\SWcr$, we used the largest and smallest labels only of the curve diagram $D_{x}$, not the total curve diagram $\widebar{D_{x}}$. However, in order to determine the wall crossing labelings we need to consider the total curve diagram. 

The following relates the wall-crossing labeling with the dual length of a braid:
\begin{theorem}\cite[Theorem 3.3]{iw}
\label{theorem:wall}
For a braid $x \in B_{n}$, we have the following equalities: 
\begin{enumerate}
\item $\sup_{\sf d}(x) = \LWcr(x)$.
\item $\inf_{\sf d}(x) = \SWcr(x)$.
\end{enumerate}
\end{theorem}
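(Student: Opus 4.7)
The plan is to induct on the dual canonical length $\ell_{\sf d}(x)$, after reducing to the case $\inf_{\sf d}(x)=0$. The reduction uses the observation that left-multiplication by $\delta^{\pm 1}$ shifts $\sup_{\sf d}$, $\inf_{\sf d}$, $\LWcr$ and $\SWcr$ simultaneously by $\pm 1$: indeed, $\delta$ is a clockwise rotation of $\mathbb D_n$ by one notch, so each arc segment of the curve diagram acquires exactly one extra algebraic intersection with the union $W=\bigcup_i W_i$ of walls. Hence we may assume $N_{\sf d}(x)=d_1\cdots d_r$ with $r=\ell_{\sf d}(x)$ and seek to prove $\LWcr(x)=r$ and $\SWcr(x)=0$.

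The base case $r=0$ is trivial, and $r=1$ follows by direct inspection: the curve diagram of a single dual simple element, being a product of disjoint polygon rotations, carries arc labels in $\{0,1\}$ and attains both extreme values whenever $d_1\notin\{1,\delta\}$. For the inductive step on (i), write $x=x'd_r$ with $\ell_{\sf d}(x')=r-1$ and note that, since braids act on the right, $D_x$ is obtained from $D_{x'}$ by applying the mapping class of $d_r$; equivalently, the wall-crossing labels of $D_x$ can be read by intersecting a parametrization of $D_{x'}$ with the pulled-back walls $\phi_{d_r}^{-1}(W)$. A polygon-by-polygon local analysis then shows that every arc label changes by $0$ or $+1$ under the action of $d_r$. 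The decisive point is to exhibit an arc of $D_{x'}$ whose label equals the maximum $r-1$ and whose image under $d_r$ crosses one additional wall: this is precisely what the left-weightedness of $(d_{r-1},d_r)$ provides, since via Proposition~\ref{prop:left-weighted} it forces some polygon of $d_r$ to rotate across a wall traversed by such an arc --- the obstruction relation $\vdash$ is exactly the combinatorial shadow of this geometric witnessing. Combining the upper bound with this lower bound yields $\LWcr(x)=r=\sup_{\sf d}(x)$. Statement (ii) is then obtained either by a parallel downward induction controlling the smallest label, or more symmetrically by applying (i) to $x^{-1}$ together with the identity $\inf_{\sf d}(x)=-\sup_{\sf d}(x^{-1})$.

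The main obstacle is the combinatorial lemma at the heart of the inductive step: establishing both the uniform upper bound of $+1$ per dual simple factor on every arc label, and the existence of a witnessing arc that actually attains this increment whenever the adjacent pair of factors is left-weighted. The former requires a careful case analysis of how each polygon of $d_r$ interacts with the walls $W_i$ passing through or near its interior, and of how incoming arcs of $D_{x'}$ can wrap around the polygon; the latter is precisely the geometric content of the obstruction relation $\vdash$ from Section~\ref{section:garsidedual}, and is what ties the Garside-combinatorial data to the concrete wall-crossing count.
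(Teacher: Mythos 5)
The paper does not actually prove this statement: it is imported verbatim from \cite[Theorem 3.3]{iw}, so there is no internal proof to compare yours against. What the paper does supply is the key mechanism (reviewed just before Theorem \ref{theorem:finalfactor} and illustrated in Figure \ref{fig:Paction}): each disjoint polygon of a dual simple factor rotates its regular neighborhood by one notch, raising the labels there by exactly one and leaving the rest of the diagram and its labels untouched. Your outline --- induction on the canonical length after normalizing the infimum, the uniform upper bound of $+1$ per factor, and the use of the obstruction relation $\vdash$ of Proposition \ref{prop:left-weighted} as the combinatorial record of a maximally labeled arc of $D_{x'}$ entering a polygon of $d_r$ and having its label incremented --- is precisely the argument of \cite{iw} and is consistent with everything the paper uses (in particular it is the engine of the paper's own proof of Theorem \ref{theorem:finalfactor}). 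So the strategy is the right one, with the deferred local lemma being the genuine content of \cite{iw}.

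Two cautions. First, the reduction to $\inf_{\sf d}(x)=0$ by ``left multiplication by $\delta$'' needs a word given the right-action convention: the clean statement is that \emph{right} multiplication by $\delta$ raises every label by one (its polygon is the convex hull of all punctures), and one passes to left multiplication via $\delta x=(\delta x\delta^{-1})\delta$, conjugation by $\delta$ being a mere rotation of the picture. Second, deducing part (ii) from part (i) applied to $x^{-1}$ is not legitimate as stated: the identity $\inf_{\sf d}(x)=-\sup_{\sf d}(x^{-1})$ is standard Garside algebra, but the step you would then need, namely $\SWcr(x)=-\LWcr(x^{-1})$, compares the curve diagrams of $x$ and of $x^{-1}$, which are unrelated objects; that identity is exactly equivalent to the conjunction of (i) and (ii) and cannot be assumed. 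The minimal-label statement must be proved by its own downward induction (using the description of the action of the \emph{inverse} of a dual simple element, which lowers labels by at most one), and the two directions are not quite mirror images of each other --- compare the warning in Remark \ref{remark:PropC'} that maximal and minimal labels do not play symmetric roles in this paper's setting. With (ii) handled that way, the proposal is sound in outline.
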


Here we show a stronger result than Theorem \ref{theorem:wall}, which is suggested by and is implicit in the proof of \cite[Theorem 3.3]{iw}: one can read not only supremum, infimum, but also dual Garside normal form from the curve diagram.
Recall from Section \ref{section:garsidedual} the lattice ordering $\preccurlyeq_{\sf d}$ on $B_n$. 

\begin{theorem}
\label{theorem:finalfactor}
Let $x \in B_{n} $ be a braid and put $\ell=\LWcr(x)-\SWcr(x)$.
For $k=\ell,\ldots,1$, we define $d_{k}$ inductively as follows: 
\begin{enumerate}
\item $d_{\ell}$ is the least common multiple (with respect to $\preccurlyeq_{\sf d}$) of all letters $a_{i,j}$ such that the curve diagram $D_{x}$ contains an arc segment $\wideparen{(i j)}$ with wall-crossing labeling $\LWcr(x)$.
\item $d_{k}$ is the least common multiple (with respect to $\preccurlyeq_{\sf d}$) of all letters $a_{i,j}$ such that the curve diagram $D_{x d_{\ell}^{-1}\cdots d_{k+1}^{-1}}$ contains an arc segment $\wideparen{(i j)}$ with wall-crossing labeling $(k+\SWcr(x))$.
\end{enumerate}
Then the dual normal form of $x$ is given by
$$ N_{\sf d}(x)=\delta^{\SWcr(x)}d_1\ldots d_{\ell}.$$.
\end{theorem}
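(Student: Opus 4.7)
The plan is to proceed by induction on the dual canonical length $\ell_{\sf d}(x)$, using Theorem~\ref{theorem:wall} as the starting input. By that result $\sup_{\sf d}(x)=\LWcr(x)$ and $\inf_{\sf d}(x)=\SWcr(x)$, so $\ell_{\sf d}(x)=\ell$. Writing the \emph{actual} dual normal form as $N_{\sf d}(x)=\delta^{\SWcr(x)}e_1\cdots e_\ell$, the objective is to show $e_k=d_k$ for every $k$. The base case $\ell=0$ is trivial. For the induction step, once the rightmost factor is identified as $e_\ell=d_\ell$, the braid $xd_\ell^{-1}$ has dual normal form $\delta^{\SWcr(x)}e_1\cdots e_{\ell-1}$, and applying Theorem~\ref{theorem:wall} to $xd_\ell^{-1}$ yields $\LWcr(xd_\ell^{-1})=\LWcr(x)-1$ and $\SWcr(xd_\ell^{-1})=\SWcr(x)$; hence the recursive prescription for $d_{\ell-1},\ldots,d_1$ in terms of $x$ coincides with the prescription applied to $xd_\ell^{-1}$, and the induction hypothesis closes the argument.

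The identification $e_\ell=d_\ell$ breaks into two divisibilities. For $d_\ell\preccurlyeq_{\sf d} e_\ell$: I would show that whenever $D_x$ contains an arc segment $\wideparen{(ij)}$ with wall-crossing label $\LWcr(x)$, the generator $a_{i,j}$ is a right factor of $x$ in the dual monoid, which by standard Garside theory forces $a_{i,j}\preccurlyeq_{\sf d} e_\ell$ (exploiting that for a single letter $a_{i,j}$ and a dual simple element, left- and right-divisibility are equivalent, thanks to the cyclic symmetry of the polygonal decomposition). Geometrically, the existence of an arc segment $\wideparen{(ij)}$ at the deepest wall-crossing level means that $a_{i,j}^{-1}$ can be slid back along this segment, reducing the total wall-crossing count by exactly one, so $\sup_{\sf d}(xa_{i,j}^{-1})=\sup_{\sf d}(x)-1$ by Theorem~\ref{theorem:wall}, whence the claimed right-divisibility. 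Taking the left-lcm over such letters yields $d_\ell\preccurlyeq_{\sf d} e_\ell$.

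For the reverse divisibility $e_\ell\preccurlyeq_{\sf d} d_\ell$: each dual simple element is the left-lcm of the edge letters $a_{i_s,i_{s+1}}$ of its polygonal factors. It therefore suffices to check that every such edge letter of every polygon of $e_\ell$ actually appears as an arc segment $\wideparen{(i_s,i_{s+1})}$ of $D_x$ at maximum label $\LWcr(x)$. This is obtained by tracing how $D_x$ is built from $\overline E$ under the successive action of $e_1,\ldots,e_\ell$: the innermost polygonal edges of $e_\ell$ are forced, by the bigon-minimality of the intersection number with $W$, to leave visible traces at the deepest wall-crossing level, and the left-weightedness of the pair $(e_{\ell-1},e_\ell)$ read through Proposition~\ref{prop:left-weighted} prevents any earlier factor from absorbing such an edge. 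This refines the argument of \cite[Theorem~3.3]{iw}, which tracks only the extreme label but not the specific generators realizing it.

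The main obstacle is this second divisibility. Establishing that every edge of every polygon of $e_\ell$ actually surfaces at label $\LWcr(x)$, and that no arc segment at that level comes from a generator outside $e_\ell$, requires a careful step-by-step tracking of the wall-crossing labeling as one attaches the factors $e_1,\ldots,e_\ell$ in turn to $\overline E$. The bookkeeping is delicate but essentially mechanical once one has in hand the left-weightedness criterion and the geometric description of dual simple elements as products of disjoint polygons; it amounts to upgrading the inductive construction already carried out in \cite{iw} from a statement about extremal wall-crossing labels to a statement about the precise set of generators realizing them.
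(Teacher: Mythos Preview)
Your inductive skeleton matches the paper's. Where you diverge is in the crucial step: you attempt to prove $d_\ell = e_\ell$ via two divisibilities, whereas the paper never compares $d_\ell$ with $e_\ell$ at all. Instead, the paper takes from the proof of \cite[Lemma~3.5]{iw} (which already settles $\ell\leq 1$) that applying $d_\ell^{-1}$ to the curve diagram drops $\LWcr$ by exactly one and preserves $\SWcr$; by the induction hypothesis $N_{\sf d}(x d_\ell^{-1}) = \delta^{\SWcr(x)} d_1 \cdots d_{\ell-1}$, and the only thing left is to verify that the pair $(d_{\ell-1}, d_\ell)$ is left-weighted. This the paper does geometrically, via Proposition~\ref{prop:left-weighted} and the explicit description of how the action of $d_\ell^{-1}$ transforms each maximally labelled arc of $D_x$ into an arc of $D_{x d_\ell^{-1}}$ at the new top level that obstructs it. Uniqueness of the normal form then forces $d_k = e_k$ for all $k$ at once, with no separate divisibility check needed.

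Your route to $d_\ell \preccurlyeq_{\sf d} e_\ell$ contains a concrete error. The assertion that a maximally labelled arc $\wideparen{(ij)}$ forces $\sup_{\sf d}(x a_{i,j}^{-1}) = \sup_{\sf d}(x) - 1$ is false whenever the top label is realised elsewhere as well: if $D_x$ carries, say, both $\wideparen{(12)}$ and $\wideparen{(34)}$ at level $\LWcr(x)$ (so that $d_\ell$ has two disjoint polygons), applying $a_{1,2}^{-1}$ unwinds only the first arc and leaves the second at level $\LWcr(x)$, hence $\LWcr(x a_{1,2}^{-1}) = \LWcr(x)$ and the supremum does not drop. In fact one checks easily that if an atom $a$ is a \emph{proper} right-divisor of $e_\ell$ then $(e_{\ell-1}, e_\ell a^{-1})$ is still left-weighted, so $\sup_{\sf d}(x a^{-1}) = \sup_{\sf d}(x)$; the atom-by-atom strategy therefore cannot detect $e_\ell$ through the supremum. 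The second divisibility you correctly flag as the hard part and leave as a sketch. The left-weightedness check for $(d_{\ell-1}, d_\ell)$ is precisely the device that replaces both of your divisibility arguments.
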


Before proving Theorem \ref{theorem:finalfactor}, we review from \cite{iw} the description of how the action of a dual simple element  affects the curve diagram of a braid and its wall-crossing labeling. This was the key of the proof of Theorem \ref{theorem:wall}.

Dealing with the dual Garside structure, it will be convenient to work with the  
model of the punctured disk described in Section \ref{section:garsidedual}; in that context the wall $W_i$ is the shortest straight segment connecting the puncture $p_i$ to the boundary, oriented outwards. Notice that the isotopy involved in the change of model for the punctured disk does not affect the wall-crossing labeling since the latter 
is defined in terms of algebraic intersection of arcs and walls.

Let $x\in B_n$; let $d$ be a dual simple element. Write $d=P_1\cdots P_r$ the decomposition of $d$ into a product of disjoint polygons. For $i=1,\ldots,r$, let $N_i$ be a regular neighborhood of the polygon $P_i$ in $\mathbb D_n$. Let $A_i$ be an annulus which is a regular neighborhood of the boundary of $N_i$. Suppose moreover that $A_i$ is chosen so that none of its two boundary components forms a bigon together with the walls $W$ or the diagram $D_{x}$ and so that as many intersection points of $D_{x}$ and $W$ as possible lie in $A_i$.

Now $D_{xd}$ and its wall-crossing labeling are obtained as follows. The respective actions of each of the polygons $P_i$ are independent; each of them acts non-trivially only on the inner complementary component of the corresponding annulus and on the annulus itself (where the diagram just describes a spiral). 
For each $i=1,\ldots,r$, $N_i$ is turned by one notch in the clockwise direction and all labels are increased by one; on the annulus $A_i$, $D_{xd}$ and the corresponding labels are interpolated linearly; see Figure \ref{fig:Paction}. 
The action of the inverse of a dual simple element can be described in a very similar way, the twisting on $N_i$ being in the opposite direction, and all labels being decreased by one. 

\begin{figure}[hbtp] 
 \begin{center}
\includegraphics*[scale=0.5, width=110mm]{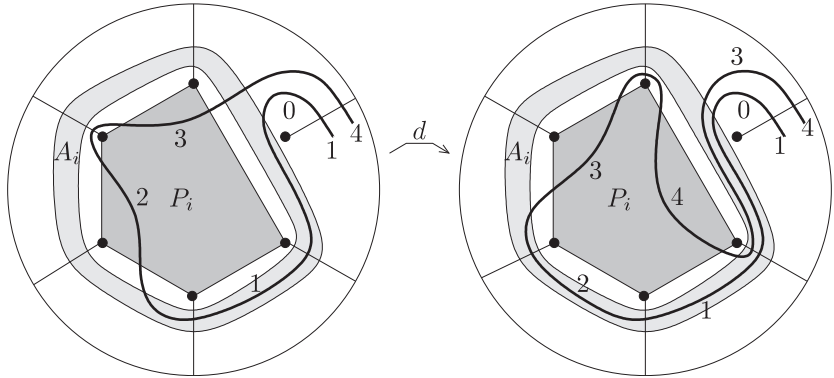}
\caption{How to draw curve diagram of $xd$ from $D_{x}$}\label{fig:Paction}
  \end{center}
\end{figure}

\begin{proof}[Proof of Theorem \ref{theorem:finalfactor}]
We prove the theorem by induction on $\ell=\LWcr(x)-\SWcr(x)$. 
When $\ell \leqslant 1$, the result is explicitly contained in the proof of Lemma 3.5 in \cite{iw}. 
Suppose that $\ell\geqslant 2$. By induction, it is sufficient to show that $(d_{\ell-1},d_{\ell})$ is left-weighted. 

We check the left-weightedness using Proposition \ref{prop:left-weighted}.
Write the dual simple elements $d_{\ell}$ and $d_{\ell-1}$ as products of disjoint polygons: $d_{\ell-1}=P_1\cdots P_{r_{\ell-1}}$ and $d_{\ell}=Q_{1}\cdots Q_{r_{\ell}} $, respectively.
Let $i,j$ be two vertices of some polygon $Q \in \{Q_1,\ldots,Q_{r_{\ell}}\}$. We must show that there exists a polygon $P \in \{ P_1,\ldots, P_{r_{\ell}}\}$ having vertices $k,l$ such that $a_{k,l}\vdash a_{i,j}$.  
By definition of $d_{\ell-1}$, it is sufficient to show that $D_{xd_{\ell}^{-1}}$ admits an arc segment $\wideparen{(kl)}$ with label $\LWcr(x)-1$ and such that $a_{k,l}\vdash a_{i,j}$.

Assume first that the diagram $D_x$ admits an arc segment $\wideparen{(ij)}$ with label $\LWcr(x)$. Then according to the description above of the action of the inverse of a polygon, the diagram $D_{xd_{\ell}^{-1}}$ admits an arc segment $\wideparen{(kl)}$ with label $\LWcr(x)-1$ such that $k\in (j,i-1)$ and $l\in(i,j-1)$, as desired. Moreover, we notice that if $g_{i,j}$ and $h_{i,j}$ are the rightmost vertex of $Q$ in $(j,i-1)$ and $(i,j-1)$ respectively, then 
$k\in (g_{i,j},i-1)$ and $l\in (h_{i,j},j-1)$. See Figure \ref{Fig:LW} (a).

Assume now that $D_x$ does not have an arc segment $\wideparen{(ij)}$ with label $\LWcr(x)$.
Since both $i$ and $j$ are vertices of $Q$, by definition of $Q$ there must exist arc segments $\wideparen{(ib)},\wideparen{(jc)}$ of $D_{x}$ with label $\LWcr(x)$, for some punctures $b, c \not \in \{i,j\}$, possibly $b=c$.
 
Suppose that such a puncture $b$ can be chosen so that 
$a_{b,i}a_{i,j}$ is a dual simple braid. This means that $b\in (i+1,j-1)$. 
But we have just seen that the action of $Q^{-1}$ produces an arc segment $\wideparen{(kl)}$ labeled by $\LWcr(x)-1$ in the diagram $D_{xd_{\ell}^{-1}}$, such that $k\in (j,i-1)$ (because the rightmost vertex of $Q$ in the arc $(b+1,i-1)$ certainly lies in the subarc $(j,i-1)$) and $l\in (i,b-1)\subset (i,j-1)$). Similarly, if $c$ can be chosen so that $a_{c,j}a_{i,j}$ is a dual simple braid, we get a pair of punctures $k,l$ with the expected property. See Figure \ref{Fig:LW} (b).

\begin{figure}[hbtp] 
 \begin{center}
\includegraphics*[scale=0.5, width=120mm]{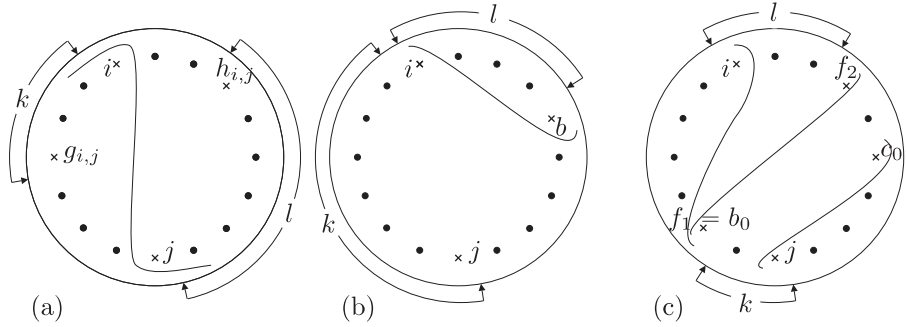}
\caption{Proof of Theorem \ref{theorem:finalfactor}; all arc segments represented are labeled $\LWcr(x)$, crosses indicate vertices of the polygon $Q$.}\label{Fig:LW}
  \end{center}
\end{figure}

Finally, suppose that no arc segment $\wideparen{(bi)}$ nor $\wideparen{(jc)}$ with labeling $\LWcr(x)$ of $D_x$ has the above property. 
Then $b\in (j+1,i-1)$, $c\in (i+1,j-1)$ and $b\neq c$. Among all $b$ so that $D_x$ admits an arc $\wideparen{(bi)}$ labeled $\LWcr(x)$, let $b_0$ be the leftmost one. Similarly, among all $c$ so that $D_x$ admits an arc $\wideparen{(jc)}$ labeled $\LWcr(x)$, let $c_0$ be the leftmost one. 
The punctures $i,j,b_0$ and $c_0$ are all distinct and vertices of the polygon $Q$. By definition of $d_\ell$, there must exist 
an arc segment $\wideparen{(f_1f_2)}$ in $D_{x}$ with labeling $\LWcr(x)$ such that $f_1\in (j+1,b_0)$, $f_2\in (i+1,c_0)$; the punctures $f_1,f_2$ are also vertices of $Q$. 
But then $D_{xd_{\ell}^{-1}}$ admits an arc $\wideparen{(kl)}$ labeled $\LWcr(x)-1$ with $k\in(j,f_1-1)$ and $l\in(i,f_2-1)$, thus with the required property. See Figure \ref{Fig:LW} (c) (an example where $f_1=b_0$).
This completes the proof of Theorem \ref{theorem:finalfactor}.
\end{proof}


\section{Burau representation}\label{section:burau}

In this section we review a homological construction of the Burau representation; 
this interpretation is used to relate the latter with the wall-crossing labeling.

\subsection{The Burau representation}
Fix the base point $\ast=-(n+1) \in \partial \mathbb D_{n}$ on the boundary of $\mathbb D_n$. The fundamental group $\pi_{1}(\mathbb D_{n})$ is a free group of rank $n$ where the free generator $x_{i}$ is represented by a loop which rounds the $i$th puncture $p_i$ once clockwise.
Let $\epsilon: \pi_{1}(\mathbb D_{n}) \rightarrow \Z=\langle q \rangle$ be the homomorphism which sends all $x_{i}$ to the generator $q$. Geometrically, for a loop $\gamma$, $\epsilon([\gamma])$ is the sum of the algebraic winding number of $\gamma$ about the puncture points $\{p_{i}\}$ (in the clockwise direction).

Let $\pi: \widetilde{\mathbb D_{n}} \rightarrow \mathbb D_{n}$ be the infinite cyclic covering corresponding to $\textrm{Ker}(\epsilon)$, and fix a lift $\widetilde{\ast}$ of the base point.
The group of covering transformations of $\widetilde{\mathbb D_{n}}$ is identified with the cyclic group $\left<q\right>$.
Then $H_{1}(\widetilde{\mathbb D_{n}};\Z)$ can be endowed with a structure of $\Z[q,q^{-1}]$-module, where multiplication by $q$ corresponds to the deck transformation. Moreover it turns out that $H_{1}(\widetilde{\mathbb D_{n}};\Z)$ is free of rank $(n-1)$ as a $\Z[q,q^{-1}]$-module. Since $\epsilon$ is $B_{n}$-invariant, we have a linear representation 
\[ \rho: B_{n} \rightarrow \GL (H_{1}(\widetilde{\mathbb D_{n}};\Z) ). \]
This is called the {\em (reduced) Burau representation}. In the rest of this section, we keep the same notation $\epsilon$, $\widetilde{\mathbb D_n}$ and $\widetilde{\ast}$ for the above defined winding number evaluation morphism, covering space of $\mathbb D_n$ and base point.

\subsection{Forks}\label{section:forks}
Let $Y$ be the $Y$-shaped graph  
consisting of three external vertices: a distinguished one $r$, two others $v_{1}$ and $v_{2}$ and one internal vertex~$c$ and three edges relating 
each external vertex to the internal one (see Figure \ref{fig:fork} (a)). 
We orient the edges of $Y$ as shown in Figure \ref{fig:fork} (a).

A {\em fork} is an embedded image of $Y$ into $\mathbb D_n$ such that:
\begin{itemize}
\item All points of $Y- \{r,v_1,v_2\}$ are mapped to the interior of $\mathbb D_n$.
\item The distinguished vertex $r$ is mapped to the base point $\ast$.
\item The other two external vertices $v_{1}$ and $v_{2}$ are mapped to two different puncture points.
\end{itemize}

Given a fork $F$, the image of the edge $[r,c]$ is called the {\em handle} of~$F$ 
and the image of $[v_1,v_2]=[v_{1},c] \cup [c,v_{2}]$, regarded as a single oriented arc, is called the {\em tine} of $F$ and denoted by $T(F)$. The image of $c$ is called the \emph{branch point} of~$F$.
Figure \ref{fig:fork} (b) shows a fork $F$ (with the handle depicted in grey line and the tine in black line).

\begin{figure}[htbp]
 \begin{center}
\includegraphics*[scale=0.5, width=100mm]{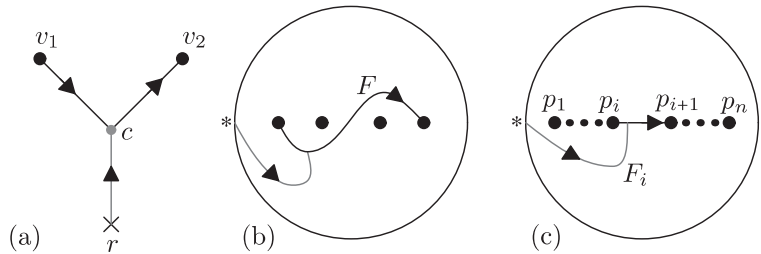}
   \caption{Fork and standard fork $F_i$}
 \label{fig:fork}
  \end{center}
\end{figure}

For a fork $F$, let $\gamma \co [0,1] \rightarrow \mathbb D_{n}$ be the handle of $F$, viewed as a path in $\mathbb D_{n}$ and take a lift 
\[ \widetilde{\gamma} \co [0,1] \rightarrow \widetilde{\mathbb D_{n}} \]
of $\gamma$ so that  $\widetilde{\gamma}(0)=\widetilde{\ast}$.
Let $\Sigma(F)$ be the connected component of $\pi^{-1}(T(F))$ that contains the point $\widetilde{\gamma}(1)$. 
The \emph{homology class of $H_{1}(\widetilde{\mathbb D_n};\Z)$ represented by} $F$ is then defined as the homology class represented by $\Sigma(F)$. 
By abuse of notation, we still denote this homology class by $F$.
Strictly speaking, since $\Sigma(F)$ is not compact we need to work with the homology of locally finite chains $H_{1}^{lf}(\widetilde{\mathbb D_n};\Z)$ or $H_{1}(\widetilde{\mathbb D_n},\widetilde{P};\Z)$, where $\widetilde{P}$ is the preimage of a small neighborhood of the punctures in $\mathbb D_{n}$. Rigorous treatments are well-known and give rise to the same conclusions (see \cite{bi3}, for example), so we do not take care of these subtle points.

Of special importance is the following family of particularly simple forks: 
for $i=1,\ldots,n-1$, let $F_{i}$ be the fork whose tine is a straight arc connecting the $i$th and the $(i+1)$st punctures and whose handle is contained in the lower half of the disk $\mathbb D_{n}$ (see Figure \ref{fig:fork} (c)). 
These are called {\em standard forks}.
Standard forks $F_{1}, \ldots, F_{n-1}$ form a basis of $H_{1}(\widetilde{\mathbb D_{n}};\Z)$. 
The group $\GL(H_{1}(\widetilde{\mathbb D_{n}};\Z))$ can be identified with $\GL(n-1;\Z[q,q^{-1}])$ using the basis of standard forks. 
This allows to get the familiar matrix description of the reduced Burau representation:

\[
\rho_n(\sigma_{1}) = \left(
\begin{array}{cc} 
-q & 0 \\
1 & 1 \\
\end{array}
\right) \oplus I_{n-3}, \;\;\; 
\rho_n(\sigma_{n-1})=I_{n-3}\oplus
\left(\begin{array}{cc}
1 & q\\
0 &-q\\
\end{array}
\right),
\] 
\[
\rho_n(\sigma_{i}) = I_{i-2} \oplus 
\left(
\begin{array}{ccc} 
1 &  q & 0 \\
0 & -q & 0 \\
0 & 1  & 1 \\
\end{array}
\right) 
\oplus I_{n-i-2}, \;\;\; (i=2,\ldots,n-2)
\]

%

\subsection{The noodle-fork pairing}
A {\em noodle} is an embedded oriented arc in $\mathbb D_n$ which begins at the base point $\ast$ and ends at some point of the boundary $\partial\mathbb D_n$. 
Noodles represent relative homology classes in $H_1(\widetilde{\mathbb D_n}, \partial  \widetilde{\mathbb D_n};\Z)$. 

The \emph{noodle-fork pairing} (in our notation, it should say fork-noodle pairing) is a homology intersection (algebraic intersection) pairing 
$$\langle\;,\;\rangle:H_1(\widetilde{\mathbb D_n} ;\Z) \times H_1(\widetilde{\mathbb D_n}, \partial  \widetilde{\mathbb D_n};\Z) \rightarrow \Z[q,q^{-1}].$$
Geometrically, it is computed in the following way (see \cite{bi3} Section 4).

Given a fork $F$ and a noodle $N$, put $T(F)$ and $N$ transverse with minimal intersections.
Let $z_1,\ldots,z_r$ be the intersection points. Each intersection point $z_{i}$ then contributes a monomial $\varepsilon_{i}q^{e_i}$ to $\langle N,F\rangle$, where $\varepsilon_i$ is the sign of the intersection between $T(F)$ and $N$ at $z_{i}$ and $e_i$ is an integer.
The noodle-fork pairing is then given by
\[ \langle F,N\rangle = \sum_{1 \leq i \leq r} \varepsilon_{i}q^{e_i} \in \Z[q,q^{-1}]. \]

The integer $e_i$ is computed as follows. Let $\gamma_i$ be the loop which is the composition of three paths $A$, $B$ and $C$ in $\mathbb D_{n}$:
\begin{itemize}
\item $A$ is a path from $\ast$ to the branch point of~$F$ along the handle of~$F$.
\item $B$ is a path from the branch point of $F$ to $z_{i}$ along the tine $T(F)$.
\item $C$ is a path from $z_{i}$ to $\ast$ along the noodle $N$.
\end{itemize}
Then $e_{i} = \epsilon([\gamma_i])$: that is, $d_i$ is the sum of the winding numbers of the loop $\gamma_i$ about the puncture points $p_1,\ldots,p_n$.

As for forks, we define a distinguished family of noodles: for $i=1,\ldots,n-1$, the \emph{standard noodle} $N_i$ is the noodle which has empty intersection with the walls and ends at some boundary point between $W_{i}$ and $W_{i+1}$. 
Given a braid $x$, the entries of its Burau matrix can be computed using the noodle-fork pairing in a fairly direct manner.

\begin{lemma}[Burau Matrix formula]
 \label{lemma:Buraumatrix}
Let $x\in B_n$. 
Then for $1\leqslant i,j\leqslant n$, the entry $\rho_n(x)_{ij}$ of its Burau matrix is given by $\rho_n(x)_{ij}=\langle (F_{i})x,N_{j} \rangle$.
\end{lemma}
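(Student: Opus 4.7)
The plan is to reduce the lemma to the dual-basis identity $\langle F_i, N_j \rangle = \delta_{ij}$, after which the formula follows at once from the $\Z[q^{\pm 1}]$-bilinearity of the noodle-fork pairing and the definition of the matrix $\rho_n(x)$ with respect to the basis of standard forks.

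I would first verify the dual-basis identity by a direct geometric computation. Choose a representative of the noodle $N_j$ which begins at $\ast$, travels through the lower half-disk $\{z \in \mathbb D^2 \: | \: \textrm{Im}\,z < 0\}$, crosses the real axis transversally at a single point $z_j \in (p_j, p_{j+1})$, and then goes straight upward to $\partial \mathbb D_n$ through the region between $W_j$ and $W_{j+1}$; by construction it avoids every wall. The tine $T(F_i)$ is the segment $[p_i, p_{i+1}]$ on the real axis and the handle of $F_i$ lies in the lower half-disk. When $i \neq j$ the tine and the noodle are disjoint after isotopy, so $\langle F_i, N_j \rangle = 0$. When $i=j$ the intersection is the single point $z_j$, and the associated loop $\gamma = A \cup B \cup C$ (handle of $F_i$, piece of the tine from the branch point to $z_j$, piece of $N_j$ from $z_j$ back to $\ast$) can be arranged to lie entirely in the closed lower half-disk, which contains no puncture; hence $\epsilon([\gamma]) = 0$ and the single intersection point contributes $q^0 = 1$. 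With the fixed orientation conventions on standard forks and standard noodles this contribution is $+1$, so indeed $\langle F_i, N_j \rangle = \delta_{ij}$.

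The lemma then follows formally. Since $(F_1, \ldots, F_{n-1})$ is a $\Z[q^{\pm 1}]$-basis of $H_1(\widetilde{\mathbb D_n}; \Z)$, the definition of $\rho_n$ with $B_n$ acting on the right reads
\[ (F_i)\, x = \sum_{k=1}^{n-1} \rho_n(x)_{ik} \, F_k. \]
Applying the $\Z[q^{\pm 1}]$-linearity of $\langle \cdot, \cdot \rangle$ in the first argument and the dual-basis identity yields
\[ \langle (F_i)x,\, N_j \rangle \;=\; \sum_{k=1}^{n-1} \rho_n(x)_{ik} \, \langle F_k, N_j \rangle \;=\; \rho_n(x)_{ij}, \]
as claimed.

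The only real obstacle is the geometric verification of the dual-basis identity: one must choose the representatives of the standard fork and standard noodle so they are transverse with the minimum number of intersections, check that no additional intersections are forced, and fix the sign/orientation conventions on $T(F_i)$, on $N_j$ and on $\mathbb D_n$ so that the non-zero pairings come out to $+1$ rather than $-1$. The exponent computation is trivial here because everything can be confined to the lower half-disk, away from the punctures; once this is pinned down, the matrix formula is immediate.
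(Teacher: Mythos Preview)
Your proposal is correct and follows exactly the same route as the paper: expand $(F_i)x$ in the standard-fork basis via the definition of $\rho_n$, apply bilinearity of the pairing, and invoke the dual-basis identity $\langle F_k,N_j\rangle=\delta_{kj}$. The only difference is that you spell out the geometric verification of this identity, whereas the paper simply states that it ``is directly checked''.
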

\begin{proof}
By definition, $(F_i)x = \sum_{k=1}^{n-1}  F_k \rho_n(x)_{ik} \in H_{1}(\widetilde{\mathbb D_n};\Z)$, hence for $i,j \in 1,\ldots,n-1$ we have
\[ \langle (F_i)x , N_j \rangle = \sum_{k=1}^{n-1}  \langle F_{k},N_{j} \rangle \rho_n(x)_{ik}\]
It is directly checked that $\langle F_{k},N_{j}\rangle = \delta_{kj}$ (Kronecker's delta) hence
\[ \langle (F_{i})x,N_{j} \rangle = \rho_n(x)_{ij}. \]
\end{proof}

\begin{example}
As an example of application of Lemma \ref{lemma:Buraumatrix}, we can retrieve the Burau matrices associated to Artin generators $\sigma_i$. First, we notice that for $k=1,\ldots,n-1$, $i\neq k-1,k,k+1$, $(F_i)\sigma_k=F_i$, so that $\langle (F_i)\sigma_k,N_j\rangle=\delta_{i,j}$.
For the remaining values of $i$, Figure \ref{fig:example} shows the images $(F_i)\sigma_k$. 
\begin{figure}[htbp]
 \begin{center}
\includegraphics*[scale=0.5, width=100mm]{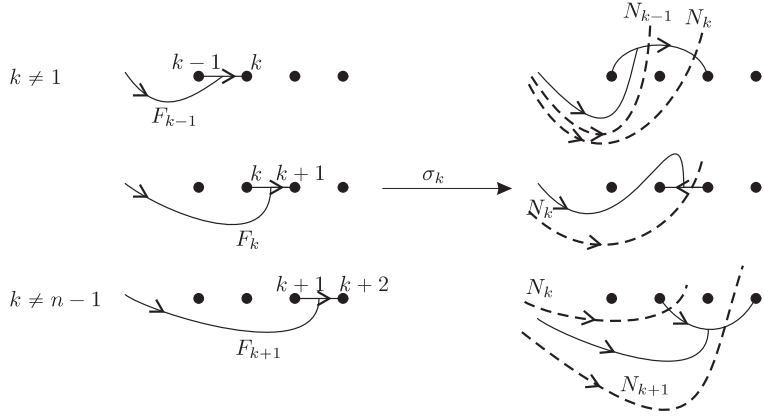}
   \caption{On the left part, forks $F_{k-1}$, $F_k$ and $F_{k+1}$ and on the right part, their images under the action of the braid $\sigma_k$; relevant noodles are depicted in dashed lines.}
 \label{fig:example}
  \end{center}
\end{figure}

With the help of Figure \ref{fig:example} we can conclude:

\begin{tabular}{lr}
{$\langle (F_{k-1})\sigma_k,N_j\rangle=\begin{cases} 
0 & {\text{if}}\  j<k-1\ {\text{or}}\  j>i,\\
1 & {\text{if}}\  j=k-1,\\
q & {\text{if}}\  j=k.\\
\end{cases}$} & 

{$\langle (F_{k+1})\sigma_k,N_j\rangle=\begin{cases} 
0 & {\text{if}}\ j<k\ {\text{or}}\  j>k+1,\\
1 & {\text{if}}\  j=k,\\
1 & {\text{if}}\  j=k+1.\\
\end{cases}$} \\
\end{tabular}

$$\langle (F_{k})\sigma_k,N_j\rangle=
\begin{cases} 
0 & {\text{if}}\  j\neq k,\\
-q & {\text{if}}\  j=k .\\
\end{cases}
$$
Lemma \ref{lemma:Buraumatrix} then allows to retrieve the matrices given at the end of Section \ref{section:forks}.

\end{example}

\subsection{Noodle-fork pairing and wall-crossing labeling}
We finally review a connection between the integers $e_i$ in the computation of the noodle-fork pairing and the wall-crossing labeling. 
This will yield the expected relation between the Burau representation and the wall-crossing labeling. 

Let $x\in B_n$. First we recall how to assign wall-crossing labelings for points belonging to the image $(F_i)x$ of the standard fork $F_i$ under $x$. 
Let us consider the part of the curve diagram $D_{x}$ that is the image of  $E_{i}$ (the line segment between the $i$-th and $(i+1)$-st punctures). We identify this part $(E_{i})x$ of the curve diagram with $(T(F_{i}))x$. Moreover, a part of the modified curve diagram  can naturally be regarded as the handle of $(F_{i})x$, as shown in Figure~\ref{fig:cdtofork}. This identification induces the wall crossing labeling on each connected component of $(F_{i})x- (W\cup U)$. For a point $z \in (F_i)x- (W\cup U)$ we denote by $\Wcr_x(z)$ the corresponding label. 

\begin{figure}[htbp]
 \begin{center}
\includegraphics*[scale=0.5, width=80mm]{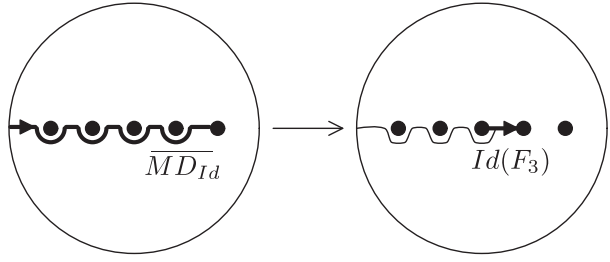}
\caption{Viewing a curve diagram as a union of tines of forks, and viewing initial segments of modified curve diagrams as handles.}
\label{fig:cdtofork}
\end{center}
\end{figure}

Let $N$ be a noodle; we may assume that no intersection point in $(T(F_i))x \cap N$ belongs to $W\cup U$. 

\begin{lemma}
\label{lemma:exponent}
Fix an intersection point $z\in (T(F_i))x \cap N$.
Let $c(z)$ be the algebraic intersection number of $W$ and the path $C$ in the definition of the pairing $\langle (F_i)x,N\rangle$
(i.e. $C$ is a path from $z$ to $\ast$ along $N$). Let $e(z)$ be the degree of $q$ in the $z$-contribution to $\langle (F_i)x,N\rangle$.
Then
\[ e(z) = \Wcr_x(z) + c(z)\]
\end{lemma}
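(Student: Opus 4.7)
The strategy is to express $e(z)=\epsilon([\gamma])$, where $\gamma = A\cdot B\cdot C$ is the loop from the definition of the noodle-fork pairing, entirely as an algebraic intersection count with the union of walls $W$, and then to split by concatenation. The basic observation is that for any loop $\gamma$ in $\mathbb{D}_n$,
$$\epsilon([\gamma]) \;=\; \gamma \cdot W$$
(the right-hand side being the algebraic intersection number). Indeed, each $W_i$ is a proper arc from $p_i$ to $\partial\mathbb{D}^2$ oriented upward, so signed intersections of $\gamma$ with $W_i$ count the clockwise winding number of $\gamma$ around $p_i$, namely the exponent of $x_i$ in $[\gamma]$. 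Summing over $i$ gives $\epsilon([\gamma])$.

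Granted this, additivity of the algebraic intersection number under concatenation yields
$$e(z) \;=\; (A\cdot W)\,+\,(B\cdot W)\,+\,(C\cdot W).$$
The third summand is exactly $c(z)$ by its definition. For the first two summands, I invoke the identification recalled just before the lemma (Figure \ref{fig:cdtofork}): the handle $A$ of the fork $(F_i)x$, followed by the portion $B$ of its tine $T((F_i)x)$ from the branch point to $z$, is isotopic rel endpoints to the initial arc of the modified curve diagram $\widebar{MD_x}$ running from the basepoint $\ast$ to $z$. Because $\Wcr_x(z)$ is defined as exactly the algebraic intersection number of $W$ with this initial arc (under the parametrization used to label connected components of $\widebar{MD_x}-(W\cup U)$), we get $(A\cdot W)+(B\cdot W)=\Wcr_x(z)$, and combining the pieces yields $e(z)=\Wcr_x(z)+c(z)$.

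The main obstacle is a small bookkeeping check that all sign conventions agree: the upward orientation of the walls $W_i$, the sign $\varepsilon_i$ assigned to each transverse crossing $T(F)\cap N$ in the definition of the pairing, and the clockwise convention for winding numbers must all interact compatibly so that the equality $\epsilon([\gamma])=\gamma\cdot W$ holds on the nose rather than up to a global sign. I would settle this once and for all by verifying it on a single normalized configuration (for instance, a standard fork $F_i$ paired against a standard noodle $N_j$, where both sides reduce to elementary computations carried out in the example following Lemma \ref{lemma:Buraumatrix}); after that consistency check, the general identity follows from the additivity argument above.
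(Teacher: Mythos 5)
Your proof is correct and follows essentially the same route as the paper's: both identify $e(z)=\epsilon([\gamma])$ with the algebraic intersection number of the loop $\gamma=CBA$ with the walls $W$, split it additively over the concatenation, and recognize the $A$-plus-$B$ contribution as $\Wcr_x(z)$ via the identification of the handle and partial tine with the initial arc of the modified curve diagram. The paper states this more tersely but makes no additional point beyond what you have.
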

\begin{proof}
Let $A$ and $B$ be the paths in the definition of the pairing $\langle (F_i)x,N\rangle$. Then $\Wcr(z)$ is nothing but the algebraic intersection number of $W$ and the composite path $BA$. Hence the algebraic intersection number of $W$ and the loop $\gamma = CBA$ is $\Wcr_x(z) + c(z)$, which is, by definition, equal to $e(z)= \epsilon(\gamma)$.
\end{proof}

\begin{corollary}
\label{cor:ineqn}
For any braid $x\in B_n$, the following inequality holds:
$$M(\rho_n(x))\leqslant \sup \!{}_{\sf d}(x).$$
\end{corollary}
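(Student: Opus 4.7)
The plan is to combine the Burau matrix formula (Lemma \ref{lemma:Buraumatrix}) with Lemma \ref{lemma:exponent}, exploiting the crucial property that the standard noodles $N_j$ have empty intersection with the walls $W$.

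First I would fix entries $i,j \in \{1,\ldots,n-1\}$ and apply Lemma \ref{lemma:Buraumatrix} to write
$$\rho_n(x)_{ij} = \langle (F_i)x,\, N_j \rangle = \sum_{z \in (T(F_i))x \,\cap\, N_j} \varepsilon_z\, q^{e(z)},$$
after isotoping the tine $(T(F_i))x$ and $N_j$ to be transverse with the minimum number of intersections, chosen to lie outside $W \cup U$ (which is always possible). The goal reduces to bounding every exponent $e(z)$ appearing in this sum by $\sup_{\sf d}(x)$.

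Next I would use the defining property of the standard noodle $N_j$: it has empty intersection with $W$. Consequently the sub-path $C$ of $N_j$ from $z$ to the basepoint $\ast$ appearing in the definition of the noodle-fork pairing meets no wall, so $c(z)=0$. Lemma \ref{lemma:exponent} then collapses to $e(z) = \Wcr_x(z)$. Since $z$ lies on $(T(F_i))x \subset D_x$ but outside $W \cup U$, it belongs to a well-defined arc segment of $D_x - (W\cup U)$, and by the very definition of $\LWcr(x)$ its wall-crossing label satisfies $\Wcr_x(z) \leq \LWcr(x)$.

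Finally, since cancellations between monomials of the same degree can only decrease the maximal $q$-degree of $\rho_n(x)_{ij}$, we obtain $M(\rho_n(x)_{ij}) \leq \LWcr(x)$. Taking the maximum over all entries and invoking Theorem \ref{theorem:wall}(1), which identifies $\LWcr(x) = \sup_{\sf d}(x)$, yields the stated inequality. I do not foresee any genuine obstacle here: the corollary is essentially an immediate consequence of the homological interpretation of the Burau representation combined with the geometric meaning of the wall-crossing labels, and the whole argument hinges on the single observation that the standard noodles are disjoint from $W$.
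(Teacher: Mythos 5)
Your proof is correct and follows exactly the paper's argument: express the matrix entries via the noodle--fork pairing (Lemma \ref{lemma:Buraumatrix}), observe that standard noodles miss the walls so $c(z)=0$ in Lemma \ref{lemma:exponent}, bound each exponent by $\LWcr(x)$, and conclude with Theorem \ref{theorem:wall}. Nothing to add.
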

\begin{proof}
For we have, by definition and thanks to Lemma \ref{lemma:Buraumatrix},
$$M(\rho_n(x)) = \max_{i,j}\{ M(\langle (F_{i})x,N_{j} \rangle) \}.$$
For a standard noodle $N_j$ and a point $z\in (T(F_{i}))x \cap N_j$ the integer $c(z)$ in Lemma \ref{lemma:exponent} is always $0$ because standard noodles do not intersect with walls.
Therefore we have 
$$\max_{i,j}\{ M (\langle (F_{i})x,N_{j} \rangle) \}\leq \LWcr(x)$$
and finally, as $\LWcr(x) = \sup_{\sf d} (x)$ (Theorem \ref{theorem:wall}) we are done.
\end{proof}


\section{Braids whose Burau Matrix detects the dual Garside normal forms}\label{section:dual}

In view of Corollary \ref{cor:ineqn}, a natural question is to ask when the converse inequality holds. Theorem \ref{theorem:main_dual}
will give a sufficient condition for the maximal degree appearing in the Burau matrix of a braid to be equal to its dual supremum. Actually we will prove more: under the same condition, it is possible to determine the dual normal form from the Burau matrix.

To state Theorem \ref{theorem:main_dual} we first introduce the notion of simply-nestedness as a refinement of the left-weightedness condition (Proposition \ref{prop:left-weighted}), which will allow us to get a better control on the action of a braid in dual normal form. 

%
Let $d,d'$ be two dual simple elements, expressed as products of disjoint polygons $d=P_1\ldots P_r$ and $d'=Q_1\ldots Q_{r'}$ respectively.  
We say that the ordered pair $(d,d')$ is \emph{simply-nested} if  for any polygon $Q$ among $Q_{1},\ldots, Q_{r'}$, there exists a {\em unique} polygon $P$ among $P_1,\ldots, P_{r}$ such that for any two vertices $i,j$ of $Q$, the polygon $P$ has two vertices $k,l$ such that 
$a_{k,l}\vdash a_{i,j}$. 
A braid $x$ will be said to be \emph{simply-nested} if each pair of consecutive factors in its dual normal form is simply-nested.

Let $B_{n}^{\sf sn}$ be the set of simply nested $n$-braids.
Although $B_{n}^{\sf sn}$ does not form a group, $B_{n}^{\sf sn}$ is a regular language over the alphabet $[1,\delta]$.
We also remark that $B_{n}^{\sf sn}$ is not symmetric: $x \in B_{n}^{\sf sn}$ does not imply $x^{-1} \in B_{n}^{\sf sn}$. A simple example is the 4-braid $x=(a_{3,4})(a_{2,4})$. Although $x$ is simply-nested, $N_{\sf d}(x^{-1}) = \delta^{-2}(a_{1,2}a_{3,4})(a_{1,2}a_{1,4})$ which is not simply nested.

%
%
%
%

We now can state our second main result:

\begin{theorem}
\label{theorem:main_dual}
Let $x \in B_n$ be a simply-nested braid.
\begin{enumerate}
\item[(i)] $\sup_{\sf d}(x) = M(\rho_n(x))$.
\item[(ii)] One can compute the dual normal form from the matrix $\rho_{n}(x)$, so the restriction of the Burau representation on the set of simply-nested braid $B_{n}^{\sf sn}$ is injective.
\end{enumerate} 
\end{theorem}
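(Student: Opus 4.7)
The plan is to proceed by induction on the dual canonical length $r=\ell_{\sf d}(x)$. The base case $r\leqslant 1$ is handled by direct inspection of $\rho_n(\delta^p)$ and $\rho_n(\delta^p d_1)$ for a single dual simple element $d_1$; note in particular that every nonzero entry of $\rho_n(\delta)$ has degree exactly $1$, so the claim for (i) is immediate there, and for (ii) one reads off $p$ from the uniform degree of the entries.

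For the inductive step, write $N_{\sf d}(x)=\delta^{p}d_{1}\cdots d_{r}$. Corollary \ref{cor:ineqn} already supplies $M(\rho_n(x))\leqslant\sup_{\sf d}(x)$, so to prove (i) it suffices to produce an entry of $\rho_n(x)$ whose coefficient of $q^{\sup_{\sf d}(x)}$ is nonzero. Combining Lemma \ref{lemma:Buraumatrix} with Lemma \ref{lemma:exponent}, and using that $c(z)=0$ for standard noodles, we have
\[
\rho_n(x)_{ij}\;=\;\sum_{z\in(T(F_{i}))x\,\cap\, N_{j}}\varepsilon(z)\,q^{\Wcr_{x}(z)},
\]
so the desired coefficient is the signed count of top-label intersection points. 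By Theorem \ref{theorem:finalfactor}, an arc $\wideparen{(kl)}$ of $D_x$ attaining the label $\sup_{\sf d}(x)$ has $a_{k,l}$ entering the lcm decomposition of $d_r$, and by the geometric description of polygon actions recalled in Section \ref{section:diagrams} these arcs are confined to the interior of the polygon neighborhoods $N_Q$ attached to the polygon factors $Q$ of $d_r$. I will argue that the simply-nested hypothesis on the pair $(d_{r-1},d_{r})$ forces the top-label arcs inside each $N_Q$ to be the clockwise one-notch rotation of a single top-label arc of $D_{xd_r^{-1}}$ housed in the unique surrounding polygon of $d_{r-1}$ provided by simply-nestedness; their picture inside $N_Q$ is thereby rigidly determined, and targeting one chosen vertex of one such $Q$ one selects $(i,j)$ so that $(T(F_i))x\cap N_{j}$ meets the top-label locus in exactly one point.

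Part (ii) is then extracted as follows. Once $\sup_{\sf d}(x)=M(\rho_n(x))$ is known from (i), the matrix of leading coefficients $\bigl([\rho_n(x)_{ij}]_{q^{\sup_{\sf d}(x)}}\bigr)_{i,j}$ records, through the noodle-fork pairing, which arcs $\wideparen{(kl)}$ of $D_x$ carry the top label; Theorem \ref{theorem:finalfactor} then reconstructs $d_r$ as the least common multiple of the corresponding $a_{k,l}$. One passes to $\rho_n(xd_r^{-1})=\rho_n(x)\rho_n(d_r)^{-1}$, observes that $xd_r^{-1}$ is simply-nested of dual canonical length $r-1$, and applies the inductive hypothesis; injectivity of $\rho_n$ on $B_n^{\sf sn}$ is then automatic.

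The main obstacle is the non-cancellation statement underlying (i): in general, several top-label arcs could contribute signed intersections that cancel in every entry of $\rho_n(x)$. The simply-nested hypothesis is designed precisely to forbid this by ruling out the configuration in which two distinct polygons of $d_{r-1}$ both ``surround'' the same polygon of $d_{r}$, which is the mechanism that would place two competing top-label arcs inside a single $N_Q$. A secondary technical point is a short case analysis on the shape of each polygon $Q$ of $d_r$ (edge versus higher polygon) to exhibit an explicit witnessing pair $(F_i,N_j)$, and to confirm that the remaining intersections of $N_j$ with non-top-label arcs of $(T(F_i))x$ contribute strictly smaller powers of $q$ as predicted by Lemma \ref{lemma:exponent}.
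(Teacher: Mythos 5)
Your skeleton (Corollary \ref{cor:ineqn} for one inequality, non-cancellation of the top-degree terms via the geometry of maximally labeled arcs, then peeling off $d_r$ and recursing) is the same as the paper's, but two steps as you state them do not go through. In part (i) you claim that simply-nestedness confines the top-label locus to the rotation of ``a single top-label arc'' per polygon of $d_r$, so that a suitable pair $(F_i,N_j)$ meets it in exactly one point. This is too strong and is false in general: many maximally labeled arc segments can coexist inside one polygon neighborhood, and a single $(E_i)x$ may contain several of them, all crossing the same noodle. What simply-nestedness actually buys --- and what the paper isolates as Property \textbf{(C)} --- is that any two maximally labeled arcs of $(E_i)x$ meeting a common polygon of $d_r$ are \emph{parallel with the same orientation}, so that, choosing $k$ minimal among left endpoints of such arcs, all their contributions to $\langle (F_i)x,N_k\rangle$ carry the same sign and cannot cancel (Lemma \ref{lemma:CondC}). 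Relatedly, your induction hypothesis is too weak to close the step: knowing only that $\sup_{\sf d}(xd_r^{-1})=M(\rho_n(xd_r^{-1}))$ gives no control on the geometry of the label-$(\LWcr(x)-1)$ arcs of $D_{xd_r^{-1}}$ that your argument needs; the induction must carry the coherence property itself, which is exactly how the paper proceeds (Lemma \ref{lemma:SNimplies_C}, resting on the analysis in the proof of Theorem \ref{theorem:finalfactor}).

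In part (ii) you assert that the matrix of leading coefficients ``records which arcs $\wideparen{(kl)}$ of $D_x$ carry the top label,'' so that $d_r$ can be read off via Theorem \ref{theorem:finalfactor}. This is a genuine gap: a standard noodle $N_j$ only detects whether some top-label arc of $(E_i)x$ crosses it, so from the columns attaining the maximal degree one recovers, for each row, a union of intervals of noodle indices, not the endpoints of the individual arcs. The paper shows (Claims \ref{claim:1p} and \ref{claim:p,p-1}) that one can locate the minimal left endpoint $i_0$ and a list of candidate right endpoints $p_1<\cdots<p_c$, and then explicitly warns that $\rho_n(x)$ alone does not single out the correct $p$; the algorithm must in addition compute the matrices $\rho_n(xa_{i_0,p_\iota}^{-1})$ and detect where the maximal degree in the relevant entry drops. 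Without this extra test your reconstruction of $d_r$, and hence the recursion and the injectivity of $\rho_n$ on $B_n^{\sf sn}$, is not justified.
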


For a braid $x$ and $i=1,\ldots,n-1$ let $\mathcal M_{x}(E_i)$ be the set of the arc segments of $(E_i)x$ whose wall-crossing labeling attains the maximal value $\LWcr(x)$ (possibly empty). We say that two arc segments in the curve diagram are \emph{parallel} if both are described by $\wideparen{(ij)}$ for some $i,j$. 
We consider the following property \textbf{(C)} (\emph{Coherence property}) for a braid $x$:
\begin{definition}
Let $x\in B_n$ and $N_{\sf d}(x) = \delta^{p}d_1\cdots d_r$. Express $d_{r}$ as a product of disjoint polygons: $d_{r}=Q_{1}\cdots Q_{b}$.
We say that \emph{$x$ has the property} \textbf{(C)} if for each $i=1,\ldots,n-1$, any two arc segments $\alpha$ and $\alpha'$ in $\mathcal {M}_{x}(E_i)$ intersecting a common polygon $Q \in \{Q_{1},\ldots,Q_{b}\}$ are parallel and have the same orientation.
\end{definition}

\begin{lemma}\label{lemma:CondC}
If $x$ has the property \textbf{(C)}, then $\sup_{\sf d}(x)= M(\rho_n(x))$ holds.
\end{lemma}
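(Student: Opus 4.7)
By Corollary \ref{cor:ineqn} we already have $M(\rho_n(x))\leqslant \sup_{\sf d}(x)$, and by Theorem \ref{theorem:wall} the right-hand side equals $\LWcr(x)$. The plan is therefore to exhibit indices $i,j$ for which the entry $\rho_n(x)_{ij}$ carries a non-vanishing $q^{\LWcr(x)}$-term. By Lemma \ref{lemma:Buraumatrix} and Lemma \ref{lemma:exponent}, applied to a standard noodle $N_j$ (for which $c(z)=0$ since standard noodles are disjoint from $W$), one has the clean identity
\[
\rho_n(x)_{ij}=\langle (F_i)x,N_j\rangle =\sum_{z\in (T(F_i))x\cap N_j}\varepsilon_z\, q^{\Wcr_x(z)}.
\]
Thus the coefficient of $q^{\LWcr(x)}$ in $\rho_n(x)_{ij}$ is precisely the signed count of intersection points between the arcs in $\mathcal M_x(E_i)$ and the noodle $N_j$, and we are reduced to the combinatorial statement that for a suitable pair $(i,j)$ no sign cancellation occurs.

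Next I will locate the pair $(i,j)$ geometrically. Fix any $i$ with $\mathcal M_x(E_i)\neq \emptyset$; this is possible since $\LWcr(x)$ is attained somewhere in $D_x$. Pick an arc $\alpha_0\in \mathcal M_x(E_i)$. Following the description of the action of a dual simple element recalled before the proof of Theorem \ref{theorem:finalfactor}, the max-labeled arcs of $D_x$ are exactly those produced by the action of the last factor $d_r$; in particular, $\alpha_0$ lies in a small neighborhood of (and intersects) some polygon $Q_0$ of $d_r$. Choose then a wall $W_k$ adjacent to a vertex of $Q_0$ and take $j$ so that the standard noodle $N_j$ ends at the boundary segment between $W_k$ and the neighboring wall, running close to that vertex into the neighborhood of $Q_0$; after an isotopy we may assume $N_j$ meets exactly the max-labeled arcs around $Q_0$ and no others (this is achievable because the polygons of $d_r$ are pairwise disjoint, so the neighborhoods used to describe their actions can be taken mutually disjoint, and $N_j$ only has to enter one of them).

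It remains to combine (C) with the previous choice: by hypothesis (C), every two arcs of $\mathcal M_x(E_i)$ that meet $Q_0$ are parallel and have the same orientation, hence their transverse intersections with the portion of $N_j$ inside the neighborhood of $Q_0$ all carry the same sign $\varepsilon_z$. Since $N_j$ meets no other max-labeled arcs by construction, the $q^{\LWcr(x)}$-coefficient of $\rho_n(x)_{ij}$ is a non-zero multiple of this common sign. This yields $M(\rho_n(x))\geqslant \LWcr(x)=\sup_{\sf d}(x)$, proving equality. The step I expect to be the main obstacle is precisely the last one, namely the verification that $N_j$ can be deformed so as to avoid the max-labeled arcs attached to other polygons $Q'$ of $d_r$: one must check that the disjointness of the $N_i$'s recalled in Section \ref{section:diagrams} is strong enough to let a standard noodle rooted between two particular walls stay in a single polygon's neighborhood, and that the sign statement ``parallel and same-oriented $\Longrightarrow$ same sign against $N_j$'' survives all the isotopies required to put everything into minimal position.
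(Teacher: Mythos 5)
Your reduction to ``find an entry whose $q^{\LWcr(x)}$-coefficient is a signed count with no cancellation'' is exactly the paper's starting point, but the step you yourself flag as the main obstacle is a genuine gap, and it is precisely the point where the paper's argument does real work. The entries of the Burau matrix are pairings with the \emph{standard} noodles $N_j$, whose minimal-position intersections with the curve diagram are topologically forced; you are not free to isotope $N_j$ so that it ``meets exactly the max-labeled arcs around $Q_0$ and no others.'' A maximally labeled arc attached to a \emph{different} polygon $Q'$ of $d_r$ may be forced to cross $N_j$ (whether two arcs in the slit disk must intersect is determined by the linking of their endpoints, not by a choice of representative), and Property \textbf{(C)} gives no parallelism or sign control between arcs meeting distinct polygons. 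So your signed count can a priori cancel, and nothing in the proposal rules this out.

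The paper closes this gap not by moving the noodle but by choosing the column index cleverly: fix $i$ with $\mathcal M_x(E_i)\neq\emptyset$ and let $k$ be \emph{minimal} such that some $\wideparen{(kp)}\in\mathcal M_x(E_i)$ with $p>k$; then any maximally labeled arc of $\mathcal M_x(E_i)$ crossing the standard noodle $N_k$ must itself have $k$ as an endpoint (by minimality of $k$), hence by Theorem \ref{theorem:finalfactor} all such arcs meet the unique polygon of $d_r$ having $k$ as a vertex. Only now does Property \textbf{(C)} apply to force all these arcs to be parallel and coherently oriented, so all intersections with $N_k$ carry the same sign and $M(\rho_n(x)_{ik})=\LWcr(x)$. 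To repair your proof you would need to replace the isotopy claim by some such combinatorial selection of the column; as written, the argument does not go through.
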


\begin{proof}
Let $N_{\sf d}(x) = \delta^{p}d_1\cdots d_r$. 
Take $i$ so that $\mathcal{M}_{x}(E_i)$ is non-empty. Take the minimal number $k$ so that there exists an arc segment $\alpha = \wideparen{(kp)} \in \mathcal{M}_{x}(E_i)$ for some $p>k$. We look at the entry $\rho_n(x)_{ik}$ in the Burau matrix of $x$, which is equal to $\langle (F_i)x, N_{k}\rangle$ by Lemma \ref{lemma:Buraumatrix}. In view of Corollary \ref{cor:ineqn} and Theorem \ref{theorem:wall}, the desired equality will be shown provided $M_q(\langle (F_i)x,N_{k}\rangle)=\LWcr(x)$. 

Let $\alpha'$ be another arc segment in $\mathcal{M}_{x}(E_i)$ which intersects the noodle $N_k$. By minimality of $k$, $\alpha'=\wideparen{(ku)}$ for some $u\in (k+1,n)$. By Theorem \ref{theorem:finalfactor}, 
some polygon $Q$ in the decomposition of $d_r$  
has vertices $k,p,u$; both arcs $\alpha$ and $\alpha'$ intersect $Q$. Hence by property \textbf{(C)}, $\alpha$ and $\alpha'$ are parallel with the same orientation (notice that, in particular, $u=p$ holds).  
This shows that all arcs in $\mathcal M_{x}(E_i)$ intersecting the noodle $N_k$ have the same sign of intersection so $M_q (\langle (F_i)x,N_{k}\rangle)=\LWcr(x)$. 
\end{proof}

\begin{lemma}\label{lemma:SNimplies_C}
Let $x\in B_n^{\sf sn}$. Then $x$ has Property {\rm{\textbf{(C)}}}. 
\end{lemma}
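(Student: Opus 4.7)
The plan is to analyze property \textbf{(C)} for $x$ by comparing the curve diagrams $D_x$ and $D_y$, where $y = xd_r^{-1}$ has dual normal form $\delta^p d_1 \cdots d_{r-1}$ and is itself simply-nested. Write $d_r = Q_1 \cdots Q_b$ as a product of disjoint polygons and fix one polygon $Q = Q_j$. The description from Section \ref{section:diagrams} of how $d_r$ acts on $D_y$ says that inside a regular neighborhood $N_Q$ of $Q$, the diagram $D_x$ is obtained from $D_y$ by a one-notch clockwise rotation of $N_Q$, with all wall-crossing labels there increased by one. Since $\LWcr(x) = \LWcr(y)+1$, the arc segments in $\mathcal{M}_x(E_i)$ intersecting $Q$ correspond bijectively, via this rotation, to the arc segments $\tilde\alpha\subset (E_i)y\cap N_Q$ whose label in $D_y$ equals $\LWcr(y)$. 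Because the rotation preserves parallelism and orientation, it suffices to prove that any two such $\tilde\alpha,\tilde\alpha'\subset N_Q$ are parallel with the same orientation.

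Next I would apply Theorem \ref{theorem:finalfactor} to $y$: every arc segment of $D_y$ with label $\LWcr(y)$ is produced by the one-notch rotation action of the last factor $d_{r-1}$ on $D_{yd_{r-1}^{-1}}$, so it lies inside the neighborhood $N_P$ of some polygon $P$ of $d_{r-1}$ and has both endpoints at vertices of $P$. For such an arc to lie inside $N_Q$ as well, its endpoints must simultaneously be vertices of $P$ and sit inside $N_Q$. Tracking the rotation action of $P$ along each of its edges, one sees that this situation occurs exactly when $P$ has a pair of vertices $k,l$ satisfying $a_{k,l}\vdash a_{i_0,j_0}$ for some edge $\{i_0,j_0\}$ of $Q$, with the arc $\tilde\alpha$ lying in the portion of $N_Q$ adjacent to that edge.

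The simply-nestedness of the pair $(d_{r-1},d_r)$ then provides the decisive input: there is a \emph{unique} polygon $P = P_j$ of $d_{r-1}$ that obstructs every edge of $Q$, and by the previous paragraph only $P_j$ can contribute an arc of label $\LWcr(y)$ lying inside $N_Q$. The rotation-action description applied to $P_j$ then shows that all such arcs share the same pair of endpoints (dictated by the specific obstructing edge of $P_j$ relative to $Q$) and the same orientation, yielding a single parallel family, which is exactly property \textbf{(C)} for $x$ at the polygon $Q$.

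The main obstacle is the geometric step tying ``lying inside $N_Q$'' to the combinatorial obstruction relation $\vdash$: one has to verify that an arc of label $\LWcr(y)$ ends up inside $N_Q$ if and only if its generating polygon of $d_{r-1}$ obstructs an edge of $Q$, and that simply-nestedness genuinely rules out interference from a second polygon of $d_{r-1}$ contributing a transverse family of max-label arcs inside $N_Q$. Intuitively, any such interference would force a second polygon of $d_{r-1}$ to also obstruct edges of $Q$, contradicting the uniqueness clause in the definition of simply-nestedness; making this intuition rigorous will require a careful inspection of how neighborhoods $N_P$ and $N_Q$ overlap under the obstruction relation, closely paralleling the analysis carried out in the proof of Theorem \ref{theorem:finalfactor}.
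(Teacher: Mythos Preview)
Your argument has a genuine gap: it never invokes induction on $r$, and the step where you claim that ``all such arcs share the same pair of endpoints (dictated by the specific obstructing edge of $P_j$ relative to $Q$)'' does not hold without it. Even granting that simply-nestedness singles out a unique polygon $P_j$ of $d_{r-1}$ whose neighborhood contains the relevant max-label arcs of $D_y$, that polygon $P_j$ may have many vertices. Theorem~\ref{theorem:finalfactor} only tells you that each max-label arc $\tilde\alpha$ of $D_y$ has its endpoints among the vertices of $P_j$; nothing forces two such arcs to use the \emph{same} pair of vertices or the same orientation. There is no single ``specific obstructing edge'': several edges of $P_j$ may obstruct edges of $Q$, and arcs $\wideparen{(kl)}$ and $\wideparen{(k'l')}$ with $\{k,l\}\neq\{k',l'\}$ can perfectly well coexist inside $N_Q$. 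Your last paragraph also misidentifies the obstacle as ruling out a \emph{second} polygon of $d_{r-1}$; the real difficulty is controlling arcs coming from the \emph{same} polygon.

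The paper closes this gap by induction on $r$. One assumes Property~\textbf{(C)} for $x'=\delta^p d_1\cdots d_{r-1}$ (which is again simply-nested). Given two max-label arcs $\alpha,\alpha'\in\mathcal{M}_x(E_f)$ meeting a common polygon $Q$ of $d_r$, one traces them back (via the proof of Theorem~\ref{theorem:finalfactor}) to max-label arcs $\beta,\beta'\in\mathcal{M}_{x'}(E_f)$ with $a_{k,l}\vdash a_{i,j}$ and $a_{k',l'}\vdash a_{i',j'}$. Simply-nestedness is used only to guarantee that $k,l,k',l'$ are vertices of a \emph{common} polygon $P$ of $d_{r-1}$, so that $\beta$ and $\beta'$ intersect this same $P$. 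Now the inductive hypothesis (Property~\textbf{(C)} for $x'$) forces $\beta$ and $\beta'$ to be parallel with the same orientation; the rotation by $Q$ then transports this conclusion to $\alpha$ and $\alpha'$. In short, the parallelism you need one level down is exactly Property~\textbf{(C)} for $x'$, and that is supplied by induction, not by a direct geometric argument about $P_j$ and $Q$.
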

\begin{proof}
The proof is by induction on the number $r$ of non-$\delta$ factors in the dual normal form of $x$. 
The case $r=1$ is checked by direct calculation. Actually, in this case $(E_j)x$ has at most one maximal labeled arc for any $j$. 

Suppose $N_{\sf d}(x)=\delta^{p}d_1\cdots d_r$ with $r>1$. Then $x'=\delta^{p} d_1\cdots d_{r-1}$ is also simply-nested and has the Property \textbf{(C)} by induction hypothesis. Let us express
$d_{r-1}$ and $d_{r}$ as products of disjoint polygons: $d_{r-1}=P_{1}\cdots P_{b'}$ and $d_{r}=Q_{1}\cdots Q_{b}$.

For $f=1,\ldots,n-1$, suppose that $\alpha=\wideparen{(ij)}$ and $\alpha'=\wideparen{(i'j')}$ are two arcs in $\mathcal{M}_{x}(E_f)$ that intersect a common polygon $Q \in \{Q_{1},\ldots,Q_{b}\}$.  
By Theorem \ref{theorem:finalfactor}, all of $i,i',j,j'$ are vertices of $Q$.
Following the proof of Theorem \ref{theorem:finalfactor} we can find arcs $\beta=\wideparen{(kl)},\beta'=\wideparen{(k'l')}$ in the diagram $D_{x'}$ with label $\Wcr(x)-1$ ($\beta,\beta'\in \mathcal M_{x'}(E_f)$) and $a_{k,l}\vdash a_{i,j}$ and $a_{k',l'}\vdash a_{i',j'}$. Moreover we can choose $\beta,\beta'$ so that $\alpha$ and $\alpha'$ come from $\beta$ and $\beta'$ respectively under the action of $Q$ (see Figure \ref{Fig:LW} (a)). 

By simply-nestedness assumption, $k,l,k',l'$ must be vertices of a common polygon $P \in \{P_{1},\ldots,P_{b'}\}$. This implies that both $\beta$ and $\beta'$ intersect with the same polygon $P$, hence by Property \textbf{(C)} for $x'$, the arc segments $\beta$ and $\beta'$ are parallel with the same direction. Therefore the same property holds true for $\alpha$ and $\alpha'$, as we wanted to show.
\end{proof} 
 
\begin{remark}
We observe that, although it is a stronger property, simply-nestedness is fairly easy to check whereas checking Property {\bf{(C)}} directly is often a hard task since we need to know both dual normal form and the curve diagram of braids.
\end{remark}

\begin{proof}[Proof of Theorem \ref{theorem:main_dual}]
Lemmas \ref{lemma:CondC} and \ref{lemma:SNimplies_C} show part (i).

We explain how to compute the final factor $d_{r}$ of the dual normal form of $x$, which gives an algorithm to compute the whole dual normal form of $x$ from its Burau matrix. Let $N_{\sf d}(x)=\delta^{p}d_{1}\cdots d_{r}$ and write $d_{r}$ as a product of disjoint polygons: $d_{r}=Q_{1}\cdots Q_{b}$.

Our strategy to determine $d_{r}$ is as follows.
We show how to find some $a_{i,j}$ satisfying $a_{i,j} \preccurlyeq_{\sf d} d_{r}$ from $\rho_{n}(x)$. Since $d_{r}$ is written as a product of at most $(n-2)$ letters $a_{i,j}$, by iterating this procedure at most $(n-2)$ times, we eventually determine $d_{r}$.

For $i=1,\ldots,n-1$, let $M^{c}_{i}(x)=\max\{\rho_n(x)_{ji}\: | \: j=1,\ldots,n-1\}$, namely, the maximal degrees of the variable $q$ in the $i$-th \emph{column} of the Burau matrix of $x$ (do not confuse $M_{i}(x)$ in Section \ref{section:classical}, where we used the maximal degrees of the $i$-th \emph{row}. First we show that $M^{c}_{i}(x)$ gives candidates of $a_{i,j}$ satisfying $a_{i,j}\preccurlyeq_{\sf d} d_{r}$.

\begin{claim}
\label{claim:1p}
We have $$\min\{i\in \{1,\ldots,n\}\: | \: \exists j,a_{i,j}\preccurlyeq_{\sf d}d_r\}=
\min\{i\in \{1,\ldots,n-1\}\: | \: M^{c}_i(x)=M(\rho_n(x))\}.$$ 
\end{claim}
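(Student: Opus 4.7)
My plan is to interpret both sides of the equality as the smallest puncture index appearing as a vertex in the polygon decomposition of $d_{r}$. Let $i_{0}$ denote the left-hand side. Since the polygons $Q_{1}, \ldots, Q_{b}$ comprising $d_{r}$ are pairwise disjoint, $i_{0}$ is a vertex of a \emph{unique} polygon $Q$ among them, and every vertex of every polygon of $d_{r}$ lies in $\{i_{0}, i_{0}+1, \ldots, n\}$. I would then prove separately that every $i < i_{0}$ satisfies $M^{c}_{i}(x) < M(\rho_{n}(x))$ and that $M^{c}_{i_{0}}(x) = M(\rho_{n}(x))$, which together yield the claim.

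For the first inequality, fix $i < i_{0}$. Combining Lemma \ref{lemma:Buraumatrix} with Lemma \ref{lemma:exponent} and using that standard noodles are disjoint from the walls (so $c(z)=0$), one has $M(\rho_{n}(x)_{j,i}) \leq \max\{\Wcr_{x}(z) : z \in (T(F_{j}))x \cap N_{i}\}$ for every~$j$. By Theorem \ref{theorem:finalfactor}, every arc segment $\wideparen{(kl)}$ of $D_{x}$ carrying the maximal label $\LWcr(x)$ has both endpoints equal to vertices of polygons of $d_{r}$, so $k, l \geq i_{0} > i$. Such an arc lies entirely on the side of $N_{i}$ containing the punctures $p_{i+1}, \ldots, p_{n}$, hence does not cross $N_{i}$. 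Therefore no intersection point of $(T(F_{j}))x$ with $N_{i}$ carries the top label $\LWcr(x)$, and combining with part~(i) of the theorem (already established via Lemmas \ref{lemma:CondC} and \ref{lemma:SNimplies_C}) yields $M^{c}_{i}(x) < \LWcr(x) = M(\rho_{n}(x))$.

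For the second equality, I would produce a row index $j'$ with $M(\rho_{n}(x)_{j', i_{0}}) = M(\rho_{n}(x))$. Since $i_{0}$ is a vertex of $Q$, Theorem \ref{theorem:finalfactor} supplies an arc segment $\alpha_{0} = \wideparen{(i_{0}j_{0})}$ in $D_{x}$ with label $\LWcr(x)$ for some vertex $j_{0}$ of $Q$; let $j'$ be the unique index such that $\alpha_{0} \subset (E_{j'})x$. Because $j_{0} > i_{0}$, $\alpha_{0}$ crosses $N_{i_{0}}$ exactly once and contributes a monomial of degree $\LWcr(x)$ to $\rho_{n}(x)_{j', i_{0}} = \langle (F_{j'})x, N_{i_{0}}\rangle$. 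Any other arc $\alpha' = \wideparen{(kl)} \in \mathcal{M}_{x}(E_{j'})$ crossing $N_{i_{0}}$ satisfies $\min(k,l) \leq i_{0} < \max(k,l)$; since $k, l \geq i_{0}$ by minimality of $i_{0}$, this forces $\min(k,l) = i_{0}$. Hence $\alpha'$ has $i_{0}$ as an endpoint and meets the unique polygon $Q$ of $d_{r}$ containing $i_{0}$. Property \textbf{(C)} (which holds by Lemma \ref{lemma:SNimplies_C}) then forces $\alpha'$ to be parallel to $\alpha_{0}$ with the same orientation, so $\alpha'$ and $\alpha_{0}$ intersect $N_{i_{0}}$ with the same sign. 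No cancellation occurs at degree $\LWcr(x)$, and therefore $M(\rho_{n}(x)_{j', i_{0}}) = \LWcr(x) = M(\rho_{n}(x))$, as needed.

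The main obstacle lies in this second step: without the simply-nestedness hypothesis, different arcs in $\mathcal{M}_{x}(E_{j'})$ crossing $N_{i_{0}}$ could contribute with opposite signs at the top degree, and one would lose all control over $M^{c}_{i_{0}}(x)$. Property \textbf{(C)} is exactly the tool tailored to ensure parallelism and coherent orientations among the competing maximal arcs, which is what makes the sign analysis at $i=i_{0}$ work.
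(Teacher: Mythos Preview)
Your proof is correct and follows essentially the same approach as the paper's. Both arguments split the claim into two halves: showing $M^{c}_{i_0}(x)=M(\rho_n(x))$ via Property~\textbf{(C)} (finding a maximally labeled arc $\wideparen{(i_0 p)}$ and arguing that every other maximal arc in the same $(E_{j'})x$ crossing $N_{i_0}$ is parallel to it with the same orientation), and showing $M^{c}_i(x)<M(\rho_n(x))$ for $i<i_0$ by observing that any maximally labeled arc has both endpoints among the vertices of $d_r$, hence $\geq i_0$, so cannot contribute to column~$i$. Your exposition is slightly more explicit in places (for instance, you spell out that $\min(k,l)=i_0$ forces $\alpha'$ to share the vertex $i_0$ with the unique polygon $Q$), but the logical structure and the key inputs (Theorem~\ref{theorem:finalfactor}, Lemma~\ref{lemma:exponent}, and Property~\textbf{(C)}) are identical to the paper's.
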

\begin{proof}
Let $i_0=\min\{i\in \{1,\ldots,n\}\: | \: \exists j,a_{i,j}\preccurlyeq_{\sf d}d_r\}$.
Let $k>i_0$ be such that $a_{i_0,k}\preccurlyeq d_r$. 

First, we show that $M_{i_0}^{c}(x)=M(\rho_n(x))$. 
Since $a_{i_0,k}\preccurlyeq d_r$ and by Theorem \ref{theorem:finalfactor} there must exist some $p\in\{1,\ldots,n\}$, $p>i_0$, such that $D_x$ admits an arc $\alpha=\wideparen{(i_0p)}$ labeled $\LWcr(x)=M(\rho_n(x))$. Let also $Q\in\{Q_1,\ldots,Q_b\}$ having vertices $i_0,p,k$ and let $j$ be such that $\alpha\in\mathcal M_x(E_j)$.
We observe that $\alpha$ intersects the noodle $N_{i_0}$. We will show that $M(\rho_n(x)_{j,i_{0}})=M(\rho_n(x))$.
Indeed, let $\alpha'\in \mathcal M_x(E_j)$ and suppose that $\alpha'$ intersects $N_{i_0}$. By minimality of $i_0$, $\alpha'$ must intersect with the polygon $Q$ and by Property {\bf{(C)}}, $\alpha'$ is parallel to $\alpha$ with the same orientation. Hence the $\alpha$ and $\alpha'$ intersect with $N_{i_0}$ with the same sign.  
Therefore $M(\langle (F_j)x,N_{i_0}\rangle)=M(\rho_n(x))$ as we wanted to show. 

Second, we show that for $i<i_0$, $M^{c}_i(x)\neq M(\rho_n(x))$. 
Otherwise, there would exist some $j$ such that $\mathcal M_x(E_j)$ is non-empty and we could find some $\beta\in\mathcal M_x(E_j)$ intersecting the noodle $N_{i}$. But then because of Theorem \ref{theorem:finalfactor}, $\beta$ yields a letter prefix of $d_r$ which contradicts the minimality of $i_0$. 
\end{proof}

It follows that we can find $i_0$ as above looking at the columns of $\rho_n(x)$. 
We then proceed to find $k$ such that $a_{i_0,k}\preccurlyeq_{\sf d}d_r$.
Let $j$ be such that $M(\rho_n(x)_{j,i_0})=M_{i_0}^c(x)$. There might be several ones, we just choose any of them. 
Then there is a maximally labeled arc segment $\alpha=\wideparen{(i_0p)}\in \mathcal M_x(E_j)$ which intersects the noodle $N_{i_0}$. 
It is enough to determine $p$ because Theorem \ref{theorem:finalfactor} implies that $a_{i_0,p}\preccurlyeq_{\sf d}d_r$. 
Notice that, by Property {\bf{(C)}}, $p$ is unique with the property that $(E_j)x$ contains a maximally labeled arc segment of the form $\wideparen{(i_0p)}$. In the remaining part of the proof, $p$ and $\alpha=\wideparen{(i_0p)}$ are fixed and we explain how to determine $p$ from the Burau matrix.

\begin{claim}
\label{claim:p,p-1}
The integer $p$ above satisfies 

\begin{itemize}
\item[(i)] $M(\langle(E_{j})x, N_{p-1}\rangle)=M(\rho_{n}(x)_{j (p-1)})=\LWcr(x),$
\item[(ii)] $M(\langle (E_{j})x, N_{p}\rangle)=M(\rho_{n}(x)_{j p})\neq \LWcr(x).$
\end{itemize}
\end{claim}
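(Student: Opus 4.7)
The plan is to use the noodle-fork pairing of Section \ref{section:burau}. By Lemmas \ref{lemma:Buraumatrix} and \ref{lemma:exponent}, and since standard noodles do not meet walls (so $c(z)=0$), the maximal $q$-degree of $\langle (E_j)x,N_k\rangle$ is bounded above by $\LWcr(x)$, and equality holds precisely when some arc of $\mathcal{M}_x(E_j)$ crosses $N_k$ without algebraic cancellation at the top degree. The key topological observation I would make first is that $\alpha=\wideparen{(i_0,p)}$ has its endpoints on opposite sides of $N_{p-1}$ (since $i_0\leq p-1<p$) but on the same side of $N_p$; consequently $\alpha$ contributes a $\pm q^{\LWcr(x)}$ term to $\langle (E_j)x,N_{p-1}\rangle$ and contributes $0$ to $\langle (E_j)x,N_p\rangle$.

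Both (i) and (ii) then reduce to controlling the other maximally labeled arcs of $\mathcal{M}_x(E_j)$. Let $Q$ be the polygon in the decomposition of $d_r$ containing $i_0$ and $p$, and let $\beta=\wideparen{(a,b)}\in\mathcal{M}_x(E_j)$ be another max arc, say $a<b$. By Theorem \ref{theorem:finalfactor}, $\{a,b\}$ lies in some polygon $Q''$ of $d_r$, and the minimality established in Claim \ref{claim:1p} yields $a\geq i_0$. If $Q''\neq Q$, the disjointness of polygons forces all vertices of $Q''$ to lie in a single open sub-arc of $\Gamma$ bounded by two consecutive vertices of $Q$; since $i_0$ and $p$ are themselves $Q$-vertices, each such sub-arc is contained either in the clockwise arc $(i_0,p)=\{i_0+1,\ldots,p-1\}$ or in $(p,i_0)=\{p+1,\ldots,n,1,\ldots,i_0-1\}$. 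A short case analysis, together with $a\geq i_0$, then excludes $Q''$ from containing simultaneously both $a\leq p-1$ and $b\geq p$ (needed for $\beta$ to cross $N_{p-1}$) and likewise both $a\leq p$ and $b\geq p+1$ (needed for $\beta$ to cross $N_p$). Thus $Q''=Q$, and Property \textbf{(C)} from Lemma \ref{lemma:SNimplies_C} forces $\beta$ parallel to $\alpha$ with matching orientation, so that $\beta=\wideparen{(i_0,p)}$. This finishes both parts: in (i), the maximal contributions to $\langle (E_j)x,N_{p-1}\rangle$ all share the sign of $\alpha$ and hence do not cancel, yielding $M(\rho_n(x)_{j(p-1)})=\LWcr(x)$; in (ii), no arc parallel to $\alpha$ crosses $N_p$, so no degree-$\LWcr(x)$ term appears, giving $M(\rho_n(x)_{jp})<\LWcr(x)$.

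The main obstacle is the combinatorial case analysis connecting the disjointness of convex polygons on $\Gamma$ with the partition of the remaining punctures induced by the noodle $N_{p-1}$ or $N_p$. The minimality of $i_0$ is precisely what forbids vertices in $\{1,\ldots,i_0-1\}$, and without it a ``backward'' polygon $Q''\subset(p,i_0)$ could produce a competing maximally labeled arc with left endpoint in $\{1,\ldots,i_0-1\}$, breaking both parts of the claim.
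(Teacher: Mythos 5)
Your proposal is correct and takes essentially the same route as the paper's proof: minimality of $i_0$ forces any competing maximally labeled arc meeting $N_{p-1}$ (resp.\ $N_p$) to intersect the polygon $Q$ containing $i_0$ and $p$, and Property \textbf{(C)} then makes it parallel to $\alpha$ with the same orientation, giving no cancellation in degree $\LWcr(x)$ for column $p-1$ and no degree-$\LWcr(x)$ term at all for column $p$. The only difference is presentational: you spell out, via disjointness of the convex polygons of $d_r$, why the competing arc's polygon must coincide with $Q$, a step the paper compresses into the assertion that the arc must cross the chord joining $i_0$ and $p$.
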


\begin{proof}
(i) First let $\alpha'$ be any arc segment in $\mathcal M_x(E_j)$ intersecting the noodle $N_{p-1}$.
By minimality of $i_0$ it must also intersect the chord segment joining punctures $i_0$ and $p$, hence the polygon $P$. 
By Property {\bf{(C)}}, $\alpha'$ is parallel to $\alpha$ with the same direction. This shows (i). 

(ii) Consider now an arc $\alpha'\in (E_j)x$ which intersects the noodle $N_p$. We show that its label is strictly less than $\LWcr(x)$. Otherwise, by minimality of $i_0$, $\alpha'$ would also intersect the polygon $P$; by Property {\bf{(C)}} it would be parallel to $\alpha$, contradicting the fact that it intersects $N_p$. 
\end{proof}
 
Now we notice that simply looking at the matrix $\rho_n(x)$ is not sufficient to find $p$: there might be several integers sharing with $p$ the properties of Claim \ref{claim:p,p-1}. However let $\{p_1,\ldots,p_c\}$ be the set of those punctures satisfying conditions of Claim
\ref{claim:p,p-1} and suppose $p_1<\cdots<p_c$.  
To find $p$, we compute matrices $\rho_n(xa_{i_0,p_{\iota}}^{-1})$ for $\iota=1,\ldots,c$, until we find $M(\langle (E_j)xa_{i_0,p_{\iota}}^{-1},N_{p_{\iota}}\rangle))<\LWcr(x)$. This determines $p$ thanks to the following observation:

\begin{claim}
For $\iota=1,\ldots,c$ the integer $p_{\iota}$ satisfies:
$$\begin{cases}
M(\langle (E_j)xa_{i_0,p_{\iota}}^{-1},N_{p_{\iota}}\rangle)=\LWcr(x) & \text{if}\ p_{\iota}<p,\\
M(\langle (E_j)xa_{i_0,p_{\iota}}^{-1},N_{p_{\iota}}\rangle)<\LWcr(x) & \text{if}\ p_{\iota}=p.\\
\end{cases}$$
\end{claim}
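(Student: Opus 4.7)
The plan is to analyze each candidate $p_\iota$ by interpreting right-multiplication by $a_{i_0,p_\iota}^{-1}$ via the geometric description from Section \ref{section:diagrams} of the action of a dual simple element on curve diagrams, applied here to the degenerate ``polygon'' which is just the chord between the punctures $i_0$ and $p_\iota$. Equivalently, invariance of the noodle--fork pairing under the braid action yields
\[ \langle (F_j)(xa_{i_0,p_\iota}^{-1}), N_{p_\iota}\rangle \;=\; \langle (F_j)x, (N_{p_\iota})a_{i_0,p_\iota}\rangle, \]
so everything reduces to a local analysis near the chord $(i_0,p_\iota)$: far from this chord the diagram $D_{xa_{i_0,p_\iota}^{-1}}$ and the intersection pattern are unchanged, and by Lemma \ref{lemma:exponent} (with $c(z)=0$ for the standard noodle $N_{p_\iota}$) the maximal $q$-degree in the pairing equals the largest wall-crossing label among arcs that meet it.

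For $p_\iota = p$, the chord underlying $a_{i_0, p}$ is, up to ambient isotopy, parallel to the max-labeled arc $\alpha = \wideparen{(i_0\, p)} \in \mathcal{M}_x(E_j)$ that was responsible for $M^c_{i_0}(x) = \LWcr(x)$. Recalling the proof of Theorem \ref{theorem:finalfactor}, $\alpha$ arose from an arc of $D_{xd_r^{-1}}$ with label $\LWcr(x)-1$ by one notch of the clockwise rotation produced by the polygon $Q$ having $i_0$ and $p$ as vertices. Multiplying by $a_{i_0,p}^{-1}$ undoes exactly that one notch of rotation along this chord, decreasing the label of $\alpha$ (and, by Property \textbf{(C)} applied to $x$, of every other arc of $\mathcal{M}_x(E_j)$ meeting $N_p$, since all such arcs are parallel to $\alpha$ with the same orientation) strictly below $\LWcr(x)$. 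Hence $M(\langle (F_j)(xa_{i_0,p}^{-1}), N_p\rangle) < \LWcr(x)$.

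For $p_\iota < p$, the chord $(i_0,p_\iota)$ is distinct from the direction of $\alpha$, so the inverse half-twist along it leaves $\alpha$ and all of its parallel copies in $\mathcal{M}_x(E_j)$ isotopic to themselves up to a local modification; they still intersect $N_{p_\iota}$ (since $i_0 \leqslant p_\iota < p$) carrying the label $\LWcr(x)$. Property \textbf{(C)} for $x$ guarantees that these surviving maximum-label intersection points all contribute with the same sign, so the leading term $q^{\LWcr(x)}$ of $\langle (F_j)x, (N_{p_\iota})a_{i_0,p_\iota}\rangle$ cannot cancel, giving equality with $\LWcr(x)$.

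The main obstacle will be the cancellation argument in the case $p_\iota < p$: the braid $xa_{i_0,p_\iota}^{-1}$ is not in general simply-nested, so Property \textbf{(C)} cannot be invoked for it directly. The bookkeeping must instead be carried out for $x$, using Property \textbf{(C)} for $x$ itself to ensure that the intersection points of $(T(F_j))x$ with the deformed noodle $(N_{p_\iota})a_{i_0,p_\iota}$ which contribute exponent $\LWcr(x)$ all come from arcs parallel to $\alpha$ and hence share the same sign, while any new intersection points introduced by the deformation of $N_{p_\iota}$ carry strictly smaller exponents and therefore cannot interfere with the leading term.
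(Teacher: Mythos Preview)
Your approach is essentially the same as the paper's---analyze the geometric effect of $a_{i_0,p_\iota}^{-1}$ on the curve diagram and track the maximally labeled arcs---and the reformulation via the deformed noodle $(N_{p_\iota})a_{i_0,p_\iota}$ is legitimate and equivalent. However, your argument for the case $p_\iota=p$ has a genuine gap.

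You write that $a_{i_0,p}^{-1}$ ``decreases the label of $\alpha$ (and, by Property \textbf{(C)} applied to $x$, of every other arc of $\mathcal{M}_x(E_j)$ meeting $N_p$ \ldots)''. But by Claim~\ref{claim:p,p-1}(ii), whose proof shows that \emph{no} arc in $\mathcal{M}_x(E_j)$ meets $N_p$, this set is empty; in particular $\alpha$ itself does not meet $N_p$. So the assertion is vacuous and does not prove what is needed. The correct target is: no arc of $(E_j)xa_{i_0,p}^{-1}$ carrying label $\LWcr(x)$ meets $N_p$. For this you must rule out that a maximally labeled arc of $(E_j)x$ lying \emph{outside} the chord neighborhood (and hence keeping its label under $a_{i_0,p}^{-1}$) acquires a new intersection with $N_p$ after the twist. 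The paper's argument (and Figure~\ref{fig:claim67}) handles this by observing that the support of $a_{i_0,p}^{-1}$ is on the opposite side of the puncture $p$ from $N_p$, so arcs disjoint from the chord region are unchanged and still miss $N_p$ by Claim~\ref{claim:p,p-1}(ii). Your paragraph does not supply this step.

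A smaller point in the case $p_\iota<p$: since $\alpha$ has an endpoint at the puncture $i_0$, part of it lies inside the neighborhood of the chord $(i_0,p_\iota)$ and has its label lowered; you should check (as the paper does via the picture) that it is the portion of $\alpha$ \emph{outside} this neighborhood---which retains label $\LWcr(x)$---that meets $N_{p_\iota}$, and that the correspondence with the original intersections preserves signs. Your last paragraph gestures at the bookkeeping here but does not carry it out.
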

\begin{proof}
Let $\iota\in\{1,\ldots,c\}$ be such that $p_{\iota}<p$. 
We observe that each maximally labeled arc segment in $(E_j)xa_{i_0,p_{\iota}}^{-1}$ which intersects 
the noodle $N_{p_{\iota}}$ corresponds to a maximally labeled arc segment in $(E_j)x$ which intersects $N_{p_{\iota}}$ (in the same sign); see Figure \ref{fig:claim67}. This shows: 
$$M(\langle (E_j)xa_{i_0,p_{\iota}}^{-1},N_{p_{\iota}}\rangle)=M(\langle (E_j)x,N_{p_{\iota}}\rangle)=\LWcr(x).$$

\begin{figure}[htbp]
 \begin{center}
\includegraphics*[scale=1,width=120mm]{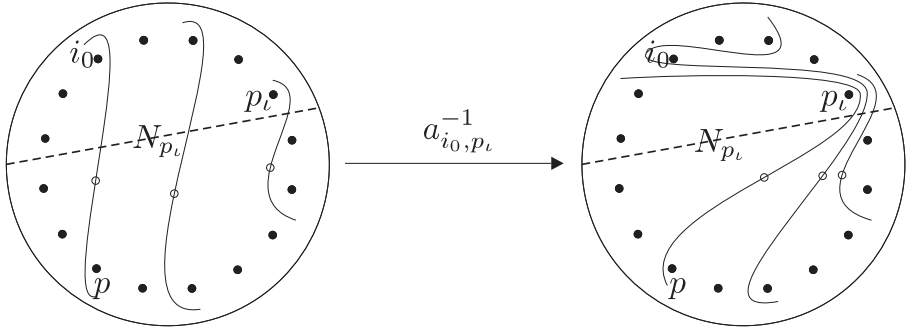}
\caption{Indicated by a small circle, maximally labeled arcs in $(E_j)x$ (in $(E_j)xa_{i_0,p_{\iota}}^{-1}$ respectively) having intersection with the noodle $N_{p_{\iota}}$ (depicted as dashed line).}
\label{fig:claim67}
\end{center}
\end{figure}
On the other hand, no arc segment with maximal label in $(E_j)xa_{i_0,p}^{-1}$ intersects the noodle $N_p$, so we get 
the desired equality $M(\langle (E_j)xa_{i_0,p}^{-1},N_{p}\rangle)<\LWcr(x)$.
\end{proof}

This achieves the proof of part (ii) of Theorem \ref{theorem:main_dual}.
\end{proof}

Before proving the corollaries of Theorem \ref{theorem:main_dual}, we make some remarks on the proof. 

\begin{remark}
We notice that only Property {\bf{(C)}} is needed in the proof of the first part of Theorem \ref{theorem:main_dual}, as well 
as in the procedure aiming to determine the last factor of the dual normal form from the matrix. However in order to use this procedure in an inductive way and hence determine the whole of the dual normal form, the simply-nestedness assumption is crucial because Property \textbf{(C)} just concerns the last factor $d_{r}$ so it does not guarantee that $\delta^{p}d_{1}\cdots d_{r-1}$ also has Property \textbf{(C)}. Moreover as we saw, simply-nestedness is often much easier to recognize as Property {\bf{(C)}}.
\end{remark}

\begin{remark}\label{remark:PropC'}
A statement similar to the first part of Theorem \ref{theorem:main_dual} concerning the dual infimum and the minimal degree of the entries of the Burau matrix, although it sounds quite reasonable, cannot be deduced from our proof. Indeed, the simply-nestedness assumption as well as the Property \textbf{(C)} do not control intersections of noodles and arc segments will smallest wall-crossing labeling at all. For the same argument to work, we need the following analogue of Property \textbf{(C)}:
\begin{description}
\item[(C$'$)] 
Let $N_{\sf d}(x) = \delta^{p}d_1\cdots d_r$. Express $d_{r}$ as a product of disjoint polygons: $d_{r}=Q_{1}\cdots Q_{b}$.
For each $i=1,\ldots,n-1$, any two arc segments $\alpha$ and $\alpha'$ in $x(E_i)$ labeled $\SWcr(x)$ and intersecting a common polygon $Q \in \{Q_{1},\ldots,Q_{t}\}$ are parallel and have the same orientation.
\end{description}

This makes a good contrast with the case of the LKB representations \cite{iw}; in that context one can apply key techniques of treating arc segments with the largest crossing labeling (Bigelow's key Lemma \cite[Lemma 5.1]{bigelowlinear}) to arc segments with the smallest crossing labeling as well.
\end{remark}

We now proceed to show the corollaries to Theorem \ref{theorem:main_dual}.

For the 3-braid group, we have the following.

\begin{corollary}\label{cor:3strands}
Let $x\in B_3$. Then 
\begin{enumerate}
\item[(i)] $\sup_{\sf d}(x) = M(\rho_{3}(x))$,
\item[(ii)] $\inf_{\sf d}(x) = m(\rho_{3}(x))$.
\item[(iii)] One can compute the dual normal form of $x$ from the matrix $\rho_{3}(x)$.
\end{enumerate}
\end{corollary}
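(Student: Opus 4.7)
The plan is to reduce Corollary~\ref{cor:3strands} to Theorem~\ref{theorem:main_dual} via a single structural observation. In $B_{3}$, the only dual simple elements besides $1$ and $\delta$ are the three generators $a_{1,2},a_{2,3},a_{1,3}$, each of which consists of a \emph{single} polygon (an edge with two vertices). Hence every non-$\delta$ factor in any dual normal form is single-polygon, and the uniqueness of the polygon $P$ required in the definition of simply-nestedness is automatic: with only one polygon to choose from, simply-nestedness reduces to left-weightedness, which holds by definition of the normal form. Consequently every 3-braid is simply-nested, and so is its inverse. Parts (i) and (iii) follow immediately from Theorem~\ref{theorem:main_dual}.

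For (ii) the easy inequality $m(\rho_{3}(x))\ge \inf_{\sf d}(x)$ is the minimum analogue of Corollary~\ref{cor:ineqn}: applying Lemma~\ref{lemma:exponent} with standard noodles (where $c(z)=0$) every intersection contributes an exponent $e(z)=\Wcr_{x}(z)\ge \SWcr(x)=\inf_{\sf d}(x)$. To obtain the reverse inequality I would apply Theorem~\ref{theorem:main_dual}(i) to $x^{-1}$, giving $\sup_{\sf d}(x^{-1})=M(\rho_{3}(x^{-1}))$, and combine it with the Garside symmetry $\sup_{\sf d}(x^{-1})=-\inf_{\sf d}(x)$. Using $\det\rho_{3}(x)=(-q)^{e(x)}$ (trivially checked on the Artin generators), one has
\[
\rho_{3}(x)^{-1}=(-q)^{-e(x)}\,\mathrm{adj}(\rho_{3}(x)),
\]
and the $2\times 2$ adjoint merely permutes entries up to sign, so
\[
M(\rho_{3}(x^{-1}))=M(\rho_{3}(x))-e(x),\qquad m(\rho_{3}(x^{-1}))=m(\rho_{3}(x))-e(x).
\]
Substituting yields $\inf_{\sf d}(x)=e(x)-M(\rho_{3}(x))=e(x)-\sup_{\sf d}(x)$, so (ii) is equivalent to the identity $M(\rho_{3}(x))+m(\rho_{3}(x))=e(x)$.

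To verify this last identity I would mirror the argument of Lemmas~\ref{lemma:CondC} and~\ref{lemma:SNimplies_C} on the minimum side, swapping $\LWcr$ for $\SWcr$ and tracking the \emph{first} factor $d_{1}$ of the dual normal form rather than the last. Concretely, I would formulate and verify the companion Property~\textbf{(C$'$)} sketched in Remark~\ref{remark:PropC'} (with $d_{1}$ in place of $d_{r}$ and $\SWcr(x)$ in place of $\LWcr(x)$), and then reproduce the noodle--fork computation of Lemma~\ref{lemma:CondC}: the minimally-labelled arc segments meeting a chosen standard noodle are forced to be parallel with the same orientation, so their contributions to $\langle (F_{i})x,N_{j}\rangle$ do not cancel and the entry realizes the minimum degree $\inf_{\sf d}(x)$. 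In $B_{3}$ the single-polygon structure of $d_{1}$ makes Property~\textbf{(C$'$)} trivially hold, bypassing the more delicate coherence analysis of Lemma~\ref{lemma:SNimplies_C}.

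The main obstacle is setting up this inf-analogue carefully: one has to check that the proof of Theorem~\ref{theorem:finalfactor} dualizes to identify the first factor from the $\SWcr$ arcs, and that the key geometric step of Lemma~\ref{lemma:CondC} (that minimal arcs meeting a common polygon are parallel) still follows from simply-nestedness when read from the start of the normal form. Once that symmetric framework is in place, the proof proceeds exactly in parallel with Section~\ref{section:dual}, and in $B_{3}$ the chain of implications \emph{single-polygon $\Rightarrow$ Property \textbf{(C$'$)} $\Rightarrow$ $m(\rho_{3}(x))=\inf_{\sf d}(x)$} is immediate.
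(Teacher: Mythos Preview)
Your argument for (i) and (iii) is exactly the paper's: in $B_{3}$ every proper dual simple element is a single (degenerate) polygon, so every $3$-braid is simply-nested and Theorem~\ref{theorem:main_dual} applies directly.

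For (ii) you ultimately land on the paper's route as well---establish the minimum-side coherence property \textbf{(C$'$)} of Remark~\ref{remark:PropC'} (trivial in $B_{3}$ because each factor is a single polygon) and then rerun the argument of Lemma~\ref{lemma:CondC} with $\SWcr$ in place of $\LWcr$. Two remarks, though. First, the inversion detour does not buy you anything: from the $2\times 2$ adjugate you get $M(\rho_{3}(x^{-1}))=M(\rho_{3}(x))-e(x)$, not $-m(\rho_{3}(x))$, so the chain $\sup_{\sf d}(x^{-1})=M(\rho_{3}(x^{-1}))$ only reproduces the Garside identity $\inf_{\sf d}(x)=e(x)-\sup_{\sf d}(x)$ and leaves you with the equivalent statement $M+m=e$, which is \emph{not} automatic for $2\times 2$ matrices over $\Z[q^{\pm 1}]$ with monomial determinant (e.g.\ $\bigl(\begin{smallmatrix}1+q^{2}&q\\ q&1\end{smallmatrix}\bigr)$). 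You still need \textbf{(C$'$)} to finish, and once you have it you get $m(\rho_{3}(x))=\inf_{\sf d}(x)$ directly, so the inversion step can simply be dropped. Second, the paper formulates \textbf{(C$'$)} with the \emph{last} factor $d_{r}$ (parallel to \textbf{(C)}), not with $d_{1}$ as you write; in $B_{3}$ this distinction is harmless since every factor is a single polygon, but it matters if you try to push the argument to higher $n$.
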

\begin{proof}
We just need to be careful about the assertion (ii) (see Remark \ref{remark:PropC'}). To prove (ii), we notice that all dual simple elements are represented as a connected polygon. This observation and an argument similar to Lemma \ref{lemma:SNimplies_C} shows any 3-braid has property \textbf{(C$'$)}.
\end{proof}

\begin{corollary}\label{cor:4strands}
Let $x\in B_4$ and $N_{\sf d}(x)=\delta^pd_1\cdots d_r$. Assume that for all $i=1,\ldots,r-1$,
$(d_i,d_{i+1})$ is not in the following list:
$$ \left\{ \begin{array}{c}(a_{1,2}a_{3,4},a_{2,4}),(a_{1,2}a_{3,4},a_{3,4}a_{2,3}),(a_{1,2}a_{3,4},a_{1,2}a_{1,4}),\\
\ (a_{2,3}a_{1,4},a_{1,3}), (a_{2,3}a_{1,4},a_{1,3}a_{2,3}),(a_{2,3}a_{1,4},a_{1,3}a_{1,4}) \end{array}\right\}$$
%
Then 
\begin{enumerate}
\item[(i)] $\sup_{\sf d}(x) = M_{q}(\rho_{4}(x))$,
\item[(ii)] one can compute the dual normal form of $x$ from the matrix $\rho_4(x)$. 
\end{enumerate}
In particular, if the dual left normal form of a 4-braid $x$ 
does not contain a factor $(a_{1,2}a_{3,4})$ or $(a_{2,3}a_{1,4})$ then $\rho_{4}(\beta) \neq 1$.
\end{corollary}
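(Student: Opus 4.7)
The strategy is to reduce Corollary~\ref{cor:4strands} to Theorem~\ref{theorem:main_dual} by showing that the hypothesis on $N_{\sf d}(x)$ is precisely the requirement that $x\in B_4^{\sf sn}$.

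First, I would observe that when a dual simple element $d$ consists of a single polygon (an edge, a triangle, or $\delta$ itself), the simply-nestedness condition for a pair $(d,d')$ becomes automatic as soon as left-weightedness holds, because the uniqueness requirement on the obstructing polygon of $d$ is then trivial. Consequently, a left-weighted pair $(d,d')$ in $B_4$ can fail to be simply-nested only when $d$ decomposes as a product of two or more disjoint polygons. In $B_4$, the only dual simple elements admitting such a decomposition are $a_{1,2}a_{3,4}$ and $a_{2,3}a_{1,4}$: any disjoint polygons on four cyclically ordered punctures must all be edges, and $\{1,2,3,4\}$ admits exactly two non-crossing perfect matchings.

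Next, I would enumerate, for each $d\in\{a_{1,2}a_{3,4},\,a_{2,3}a_{1,4}\}$, the dual simple elements $d'$ such that $(d,d')$ is left-weighted, and check simply-nestedness directly from Proposition~\ref{prop:left-weighted}. For each polygon $Q$ of $d'$ one determines, vertex pair by vertex pair, which polygon(s) of $d$ provide an obstruction; simply-nestedness then amounts to the existence of a \emph{single} polygon of $d$ that obstructs every vertex pair of $Q$ at once. A finite case-by-case analysis confirms that the left-weighted pairs which fail simply-nestedness are exactly the six pairs displayed in the statement. The computation is symmetric under the cyclic relabeling of punctures that interchanges $a_{1,2}a_{3,4}$ and $a_{2,3}a_{1,4}$, and the six pairs split correspondingly into two triples: in each triple $d'$ is either an edge joining the two polygons of $d$, or a triangle whose vertex set meets both of them.

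Once this enumeration is in hand, the hypothesis of the corollary is equivalent to $x\in B_4^{\sf sn}$, and parts~(i)--(ii) follow immediately from Theorem~\ref{theorem:main_dual}. For the ``in particular'' clause, if $N_{\sf d}(x)$ contains no factor equal to $a_{1,2}a_{3,4}$ or $a_{2,3}a_{1,4}$, then \emph{a fortiori} no forbidden pair occurs (each of the six begins with one of these two multi-polygon elements), so $x$ is simply-nested. Since the identity braid is vacuously simply-nested and Burau restricted to $B_4^{\sf sn}$ is injective by Theorem~\ref{theorem:main_dual}(ii), the equality $\rho_4(x)=1=\rho_4(1)$ would force $x=1$; contrapositively, any non-trivial $x$ satisfying this stronger hypothesis has $\rho_4(x)\neq 1$.

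The main obstacle is the explicit enumeration in the middle step. Although finite and conceptually straightforward, it requires careful attention to borderline cases of the obstruction criterion $\vdash$ where a polygon of $d$ shares a vertex with a vertex pair drawn from a polygon of $d'$. Beyond this combinatorial bookkeeping, no further idea is needed to complete the argument.
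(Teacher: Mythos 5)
Your proposal is correct and follows essentially the same route as the paper: the paper's proof is a one-line reduction to Theorem \ref{theorem:main_dual} via the observation that the six listed pairs are exactly the left-weighted pairs in $B_4$ that fail simply-nestedness. Your structuring of the enumeration (simply-nestedness is automatic once $d$ is a single polygon, so only $d\in\{a_{1,2}a_{3,4},\,a_{2,3}a_{1,4}\}$ need be examined) is a sound elaboration of what the paper leaves implicit.
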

\begin{proof}
It follows from Theorem \ref{theorem:main_dual} observing that the mentionned pairs are the only six ones which are left-weighted and not simply-nested.
\end{proof}

Let $e:B_{n} \rightarrow \Z$ be the abelianization map, given by $e(\sigma_{i})=1$.
Since $\det(\rho_n(x))=q^{e(x)}$, if $\rho_n(x)=Id$ then $e(x)=0$.
By combining this simple constraints, we get a useful criteria for braids not to lie in the kernel of Burau representation.

\begin{corollary}\label{cor:nstrands}
Let $x\in B_{n}$ be a non-trivial braid and $N_{\sf d}(x)=\delta^{p}d_{1}\cdots d_{r}$. If there exists $r'\leqslant r$ such that
\begin{enumerate}
\item[(i)] The subword $x_{r'}=\delta^{p}d_{1}\cdots d_{r'}$ is simply-nested,
\item[(ii)] $r' > e(d_{r'+1}\cdots d_{r})$,
\end{enumerate}
then $\rho_n(x)\neq 1$.
Moreover the condition (ii) is always satisfied if $r' > \frac{n-2}{n-1}r$.
\end{corollary}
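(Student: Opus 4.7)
The strategy is to assume $\rho_n(x)=1$ for contradiction and combine Theorem \ref{theorem:main_dual}(i) with the determinant/exponent-sum identity $\det\rho_n = q^{e(\cdot)}$ to contradict condition (ii). Writing $x = x_{r'} y$ with $y = d_{r'+1}\cdots d_r$, the assumption $\rho_n(x)=1$ yields both the matrix identity $\rho_n(x_{r'}) = \rho_n(y^{-1})$ and the scalar identity $e(x_{r'}) + e(y) = 0$.

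The first step is to bound $p+r'$ from above. Since $x_{r'}$ is simply-nested, Theorem \ref{theorem:main_dual}(i) gives $M(\rho_n(x_{r'})) = \sup_{\sf d}(x_{r'}) = p+r'$. For the other side, $y$ lies in the dual positive monoid $B_n^{+\ast}$, so $\inf_{\sf d}(y)\geqslant 0$; applying Corollary \ref{cor:ineqn} to $y^{-1}$ and using the standard Garside identity $\sup_{\sf d}(z^{-1}) = -\inf_{\sf d}(z)$, we obtain
\[ M(\rho_n(y^{-1})) \;\leqslant\; \sup\nolimits_{\sf d}(y^{-1}) \;=\; -\inf\nolimits_{\sf d}(y) \;\leqslant\; 0. \]
Equating the two expressions for $M(\rho_n(x_{r'})) = M(\rho_n(y^{-1}))$ forces $p+r' \leqslant 0$, i.e.\ $p \leqslant -r'$.

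The second step exploits the exponent identity. Expanding $e(x_{r'}) = p(n-1) + \sum_{i=1}^{r'} e(d_i)$ and using the bound $e(d) \leqslant n-2$ valid for every dual simple $d \neq \delta$ (immediate from the polygon description: if $d = P_1\cdots P_b$ with $P_j$ a $k_j$-gon, then $e(d) = \sum_j (k_j-1) \leqslant n-1$, with equality only for $d = \delta$), together with the fact that no factor of a dual normal form can equal $\delta$, we get $e(x_{r'}) \leqslant -r'(n-1) + r'(n-2) = -r'$. Hence $e(y) = -e(x_{r'}) \geqslant r'$, contradicting condition (ii).

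The "Moreover" part is immediate: applying $e(d_j) \leqslant n-2$ to the $r-r'$ factors of $y$ gives $e(y) \leqslant (n-2)(r-r')$, and $(n-2)(r-r') < r'$ rearranges to $r' > \frac{n-2}{n-1}r$. The only point requiring some care is verifying that $e(d_i) \leqslant n-2$ really applies to every $d_i$ of the normal form (the proposition-definition only states $d_1 \neq \delta$ explicitly): if some $d_i = \delta$ with $i \geqslant 2$, then left-weightedness of $(d_{i-1}, d_i)$ combined with $\delta \preccurlyeq_{\sf d} d_{i-1}\delta$ (since conjugation by $\delta$ preserves $B_n^{+\ast}$) forces $d_{i-1} = \delta$, and inductively $d_1 = \delta$, contradicting the convention $d_1 \neq \delta$.
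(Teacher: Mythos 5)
Your proof is correct and follows essentially the same route as the paper's: both combine Theorem \ref{theorem:main_dual}(i), the determinant identity $\det\rho_n=q^{e(\cdot)}$ and the bound $e(d_i)\leqslant n-2$ to reach the same contradiction $r'\leqslant e(d_{r'+1}\cdots d_{r})$. The only (cosmetic) difference is that you obtain $p+r'\leqslant 0$ by applying Corollary \ref{cor:ineqn} to $y^{-1}$ together with $\sup_{\sf d}(y^{-1})=-\inf_{\sf d}(y)\leqslant 0$, whereas the paper asserts directly that $M(\rho_{n}(x_{r'})\rho_n(y))\geqslant M(\rho_{n}(x_{r'}))$; your version is, if anything, the more carefully justified of the two.
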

\begin{proof}
Put $E=e(d_{r'+1}\cdots d_{r})$. Assume contrary, $\rho_n(x)=1$.
Since $e(d_{i})\leqslant(n-2)$, we have 
$ 0 = e(x) \leqslant (n-1)p + (n-2)r' + E $ so $-p \leqslant \frac{1}{n-1}((n-2)r'+ E)$.
On the other hand, by (i)
\[ 0=M(\rho_n(x)) = M(\rho_{n}(\delta^{p}d_{1}\cdots d_{r'}) \rho_n(d_{r'+1} \cdots d_{r})) \geqslant M(\rho_{n}(\delta^{p}d_{1}\cdots d_{r'}) )= p+r' \]
hence $r' \leqslant -p$. Therefore $r' \leqslant  \frac{1}{n-1}((n-2)r'+ E)$, which is equivalent to $r' \leqslant E$. This contradicts to (ii). The last assertion follows from the inequality $E \leqslant (n-2)(r-r')$. 
\end{proof}

We close this section by looking at some known examples of elements in the kernel of the Burau representations $\rho_5$ and $\rho_6$. 

Consider the braids $$x=[v_2^{-1}v_1\sigma_3v_1^{-1}v_2,\sigma_3] \in B_6,$$ where $v_1=\sigma_1\sigma_2^{-1}\sigma_5^{-1}\sigma_4$ and 
$v_2=\sigma_1^{-2}\sigma_2\sigma_5^{2}\sigma_4^{-1}$ and 
$$y=[w_1^{-1}\sigma_4w_1,w_2^{-1}\sigma_4\sigma_3\sigma_2\sigma_1\sigma_1\sigma_2\sigma_3\sigma_4w_2]\in B_5,$$ where 
$w_1=\sigma_3^{-1}\sigma_2\sigma_1^{2}\sigma_2\sigma_4^{3}\sigma_3\sigma_2$ and $w_2=\sigma_4^{-1}\sigma_3\sigma_2\sigma_1^{-2}\sigma_2\sigma_1^{2}\sigma_2^{2}\sigma_1\sigma_4^{5}$. 

It is known that $\rho_5(y)=Id$ and $\rho_6(x)=Id$. 
The following are dual normal forms of a conjugate $x'$ and $y'$ of $x$ and $y$, respectively: 

$$N_{\sf d}(x')=\delta_6^{-6}(a_{1,6}a_{4,5})(a_{1,6}a_{2,5})(a_{1,6}a_{4,6}a_{2,3})(a_{1,5}a_{4,5}a_{2,3})(a_{3,6}a_{4,5})(a_{1,6}a_{2,5}a_{4,5})(a_{1,6}a_{3,5})(a_{1,6}a_{5,6}a_{2,4})$$
$$(a_{1,3}a_{5,6})(a_{2,4}a_{5,6})(a_{1,3}a_{5,6}a_{4,5})(a_{2,6}a_{4,5})(a_{1,3}),$$

$$N_{\sf d}(y') =\delta_5^{-23} (a_{2,5}a_{4,5})(a_{1,5}a_{3,5})(a_{1,4}a_{3,4})(a_{2,5})(a_{1,5}a_{2,3})(a_{1,5}a_{3,4})^{2}(a_{1,3})(a_{2,5}a_{3,4})$$
$$(a_{1,4}a_{3,4})(a_{1,2}a_{1,4})(a_{1,2}a_{3,5})(a_{1,2}a_{1,5}a_{3,4})(a_{1,5})(a_{1,2})(a_{2,3})(a_{3,4})$$
$$(a_{2,4})(a_{1,3}a_{4,5})(a_{1,2}a_{4,5})(a_{2,3}a_{4,5})(a_{1,3}a_{4,5})(a_{1,2}a_{3,5}a_{4,5})(a_{2,5}a_{3,5})$$
$$(a_{1,3}a_{1,4})(a_{1,2}a_{1,4})(a_{1,2}a_{1,3}a_{4,5})(a_{1,2}a_{4,5})(a_{2,3}a_{4,5})^{2}(a_{2,5})(a_{1,4}a_{2,3})$$
$$(a_{2,5}a_{3,5})(a_{1,5}a_{3,5})(a_{1,5}a_{2,4})(a_{1,5}a_{4,5}a_{2,3})(a_{1,5}a_{3,5}a_{2,3})^{4}(a_{2,4})(a_{1,3}a_{4,5})$$
$$(a_{1,2}a_{4,5})(a_{2,3}a_{4,5})(a_{1,3}a_{4,5})(a_{1,2}a_{4,5}a_{3,4}).$$

See Figure \ref{fig:NormalForm6} for pictorial (polygon) expression of $N_{\sf d}(x')$. One notices that $N_{\sf d}(x')$ contains many non-simply-nested pairs. Similarly, one observes that $N_{\sf d}(y')$ also contains a lot of non-simply-nested pairs.
These examples and our results on simply-nested braid suggest the Burau matrix of a braid $x$ is close to be the identity matrix only when its dual normal form contains many non-simply nested pairs.

\begin{figure}[htbp]
 \begin{center}
\includegraphics*[scale=1.4]{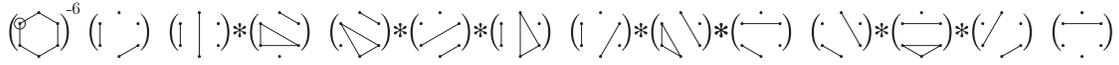}
\caption{A pictorial way to represent the dual normal form of the braid $x'\in B_6$ (the puncture rounded by a circle is the first); the symbol $\ast$ represents non simply-nested pairs.}
\label{fig:NormalForm6}
\end{center}
\end{figure}


\end{document}